\numberwithin{equation}{section}
\theoremstyle{definition}
\newtheorem{definition}[equation]{Definition}
\theoremstyle{definition}
\newtheorem{metadefinition}[equation]{Metadefinition}
\theoremstyle{definition}
\theoremstyle{definition}
\newtheorem{notation}[equation]{Notation}
\theoremstyle{definition}
\newtheorem{construction}[equation]{Construction}
\theoremstyle{remark}
\newtheorem*{remark}{Remark}
\theoremstyle{plain}
\newtheorem{theorem}[equation]{Theorem}
\theoremstyle{plain}
\newtheorem{lemma}[equation]{Lemma}
\theoremstyle{plain}
\newtheorem{proposition}[equation]{Proposition}
\theoremstyle{plain}
\newtheorem{conjecture}[equation]{Conjecture}
\theoremstyle{plain}
\newtheorem{corollary}[equation]{Corollary}
\theoremstyle{plain}
\newtheorem{question}[equation]{Question}
\theoremstyle{plain}
\newtheorem{problem}[equation]{Problem}
\DeclareMathOperator{\Hilb}{Hilb}
\DeclareMathOperator{\shiftme}{Shift}
\DeclareMathOperator{\shift}{Shift}
\DeclareMathOperator{\Span}{span}
\DeclareMathOperator{\shadow}{Shadow}
\DeclareMathOperator{\Shadow}{Shadow}
\DeclareMathOperator{\mdeg}{mdeg}
\DeclareMathOperator{\rk}{rk}
\DeclareMathOperator{\tor}{Tor}
\DeclareMathOperator{\Tor}{Tor}
\DeclareMathOperator{\supp}{supp}
\DeclareMathOperator{\msupp}{Supp}
\DeclareMathOperator{\mSupp}{Supp}
\DeclareMathOperator{\sqfree}{sqfree}
\DeclareMathOperator{\im}{Im}
\DeclareMathOperator{\pol}{pol}
\DeclareMathOperator{\gens}{gens}
\DeclareMathOperator{\init}{in}
\DeclareMathOperator{\Supp}{supp}
\newcommand{\CC}{\mathbb{C}}
\newcommand{\tensor}{\otimes}
\newcommand{\spo}{S^{\mathrm{po}}}
\newcommand{\ipo}{I^{\mathrm{po}}}
\newcommand{\jpo}{J^{\mathrm{po}}}
 \newcommand{\tildephi}{\widetilde{\phi}}
\newcommand{\tildej}{\widetilde{J}}
\newcommand{\bbbk}{\Bbbk}
\begin{document}

\author{Jeff Mermin}
\address{Jeff Mermin\\
  Department of Mathematics\\
  University of Kansas\\ Lawrence, KS 66045\\}
\email{mermin@math.ku.edu}
\author{Satoshi Murai}
\address{Satoshi Murai\\
  Department of Pure and Applied Mathematics\\
  Graduate School of Information Science and Technology\\
  Osaka University\\
  Toyonaka, Osaka, 560-0043, Japan\\
}
\email{s-murai@ist.osaka-u.ac.jp}
\thanks{The first author is supported by an NSF Postdoctoral
  fellowship (award No. DMS-0703625).  The second author is supported
  by  JSPS Research Fellowships for Young Scientists.}

\date{}
\title{The Lex-Plus-Powers Conjecture holds for pure powers}
\maketitle

Abstract: We prove Evans' Lex-Plus-Powers Conjecture for ideals
containing a monomial regular sequence.
\footnote[0]{\emph{2000 Mathematics Subject Classification}: 13D02, 13F20.\\
  \emph{Keywords and Phrases}:  Betti numbers, lex-plus-powers ideals}

\section{Introduction}
Let $S=\bbbk[x_{1},\cdots,x_{n}]$ be the polynomial ring in $n$ variables
over an arbitrary field.  
Fix $r\leq n$ and a nondecreasing sequence of positive integers, $2\leq 
e_{1}\leq e_{2}\leq \cdots\leq e_{r}$, and let
$P=(x_{1}^{e_{1}},\cdots, x_{r}^{e_{r}})$ be the ideal generated by
those powers of the variables.    (If $r\neq n$, it is sometimes
convenient to set
$e_{r+1}=\cdots=e_{n}=\infty$ and $x_{i}^{\infty}=0$.)  

The Hilbert function of a homogeneous ideal of $S$ is a well-studied
and important invariant with applications in many areas, including
Algebraic Geometry, Commutative Algebra, and Combinatorics.  One of
the basic tools in the study of Hilbert functions was provided by
Macaulay \cite{Ma} in 1927:   
every Hilbert function
of a homogeneous 
ideal of $S$ is attained by a lexicographic ideal.  Macaulay's insight
was that the lex ideals, which are defined combinatorially,
are a useful tool in studying the combinatorial invariants of the
polynomial ring.  Later, Macaulay's theorem was extended to many other
rings, including the quotient ring
$S/P$ (due to Clements and Lindstr\"om  \cite{CL}).

Motivated by Macaulay's theorem and applications in Algebraic Geometry,
Eisenbud, Green, and 
Harris made the following conjecture about Hilbert functions
\cites{EGH1,EGH2}:

\begin{conjecture}[Eisenbud, Green, Harris]\label{EGH}
Let $F=(f_{1},\cdots, f_{r})$ be a homogeneous regular sequence, such
that $\deg f_{i}=e_{i}$ for all $i$, and let $I$ be any homogeneous ideal
containing $F$.  Then there is a lex ideal $L$ such that $L+P$
and $I$ have the same Hilbert function.
\end{conjecture}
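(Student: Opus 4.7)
The result is already known in the special case $F = P$ (from the Clements-Lindström theorem quoted above): any ideal of $S$ containing $P$ descends to an ideal of $S/P$, which has the Hilbert function of some lex ideal in $S/P$, and lex ideals in $S/P$ lift to ideals of the form $L+P$ in $S$. The natural plan for the general case is therefore to reduce it to this known pure-power case via a flat deformation of the regular sequence $F$ into the pure-power sequence $P$.

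I would proceed in three steps. First, after a generic linear change of coordinates on $S$, I would look for a weight vector $w$ on $x_{1},\dots,x_{n}$ under which $\init_{w}(f_{i}) = x_{i}^{e_{i}}$ for every $i$; the associated Rees-type construction then produces a one-parameter family $\{F_{t}\}_{t \in \mathbb{A}^{1}}$ of ideals with $F_{1}=F$ and $F_{0}=P$, flat over $\mathbb{A}^{1}$, with each fiber a regular sequence of degrees $(e_{1},\dots,e_{r})$. Second, given any homogeneous $I \supseteq F$, I would $w$-homogenize the generators of $I$ to extend $I$ to a compatible flat family $\{I_{t}\}$ with $I_{1}=I$ and $I_{0} \supseteq F_{0} = P$; flatness then forces the Hilbert function of $S/I_{t}$ to be constant in $t$, so $S/I$ and $S/I_{0}$ have the same Hilbert function. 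Third, applying Clements-Lindström inside $S/P$ to the image of $I_{0}$ produces a lex ideal $L$ with $L+P$ and $I_{0}$ sharing a Hilbert function; composing the two equalities yields the desired lex ideal for $I$.

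The hardest step by far is the first one. For an arbitrary homogeneous regular sequence $F$, there is in general no coordinate system and no weight order making $\init_{w}(f_{i}) = x_{i}^{e_{i}}$ simultaneously for every $i$; the initial ideal of a regular sequence is rarely a pure-power complete intersection, and intermediate Gröbner degenerations typically destroy either the regular-sequence property or the pure-power structure of the limit. Overcoming this is the fundamental obstacle to EGH in full generality and is the reason the conjecture remains open beyond special cases. A complete proof along the lines above would require a more delicate deformation—perhaps a multi-step degeneration through intermediate monomial complete intersections, or a direct semicontinuity argument that bounds the Hilbert function of $S/(F+I)$ against $S/(P+L)$ without ever passing through an explicit flat family of regular sequences—and locating the right such argument is where the essential difficulty lies.
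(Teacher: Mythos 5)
There is nothing to check your argument against: the statement you were asked about is Conjecture \ref{EGH} itself, which the paper does not prove and explicitly records as open (``Both conjectures are open''). The paper only establishes the lex-plus-powers statement, and hence the Hilbert-function statement, when the regular sequence $F$ consists of \emph{monomials}; there the reduction to $F=P$ is carried out by Lemma \ref{anymonregseq}, which exploits the fact that a monomial regular sequence has pairwise disjoint supports, so a change of coordinates supported on $\supp(f_{1})$ fixes $f_{2},\dots,f_{r}$ and makes $x_{1}^{e_{1}}$ the reverse-lex initial form of $\phi(f_{1})$; iterating and passing to initial ideals replaces $F$ by $P$ without changing the Hilbert function. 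After that, Clements--Lindstr\"om \cite{CL} gives the lex ideal, exactly as in your steps two and three.

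The gap in your proposal is the one you yourself flag: step one. For a general homogeneous regular sequence there is no known coordinate change and weight vector $w$ with $\init_{w}(f_{i})=x_{i}^{e_{i}}$ simultaneously, and no known flat degeneration of $(f_{1},\dots,f_{r})$ to $(x_{1}^{e_{1}},\dots,x_{r}^{e_{r}})$ compatible with an arbitrary $I\supseteq F$; producing \emph{any} monomial ideal containing $P$ with the Hilbert function of $I$ is precisely Conjecture \ref{weaklpp} (without the Betti-number clause), which the paper notes implies Eisenbud--Green--Harris and is known only in special cases (e.g.\ all but one $f_{i}$ a pure power, or the degree conditions of Caviglia--Maclagan \cite{CM}). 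Steps two and three are sound modulo step one --- taking $\init_{w}(I)$ preserves the Hilbert function and would contain the degenerate $F_{0}=P$ --- so your sketch correctly reduces EGH to the degeneration problem, but it does not prove the conjecture, and no such proof exists in this paper. One further caution: even granting a suitable $w$, you should take the full initial ideal $\init_{w}(I)$ rather than the ideal generated by $w$-homogenized generators of $I$, since the latter family need not be flat.
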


In recent decades, graded Betti numbers have become an important topic
in Commutative Algebra.  One influential result is due to
 Bigatti \cite{Bi}, Hulett \cite{Hu}, and Pardue \cite{Pa} in
the 1990s.  They showed that the lex ideals of $S$ have maximal graded
Betti numbers among all ideals with a fixed Hilbert function,
providing a sharp upper bound on the graded Betti numbers of a
homogeneous ideal with a given Hilbert function.  Because of the
importance of Bigatti, Hulett, and Pardue's results, 
  similar statements are known or conjectured in many
settings where Macaulay-type theorems hold, including the exterior
algebra and the ring $S/P$
(see for example Aramova-Herzog-Hibi \cites{AHH1, AHH2} and Gasharov-Hibi-Peeva \cite{GHP}).


Inspired by Bigatti, Hulett, and Pardue's results, 
Evans \cite{FR} extended the Eisenbud-Green-Harris conjecture to include a
statement about Betti numbers:

\begin{conjecture}[Evans, The Lex-Plus-Powers Conjecture]
Let $F$, $I$, and $L$ be as in Conjecture \ref{EGH}.
Then for all $i$ and $j$ the graded Betti numbers of $I$ and $L+P$
satisfy $b_{i,j}(L+P)\geq b_{i,j}(I)$.  
\end{conjecture}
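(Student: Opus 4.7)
The plan is to prove the conjecture in the case treated by this paper, where $F = P = (x_1^{e_1}, \ldots, x_r^{e_r})$ is the sequence of pure powers. The goal then reduces to showing that for every homogeneous ideal $I \supseteq P$ one has $b_{i,j}(L+P) \ge b_{i,j}(I)$, where $L$ is the lex ideal with $\mathrm{Hilb}(L+P) = \mathrm{Hilb}(I)$. The existence of such an $L$ is supplied by the Clements-Lindstr\"om theorem recalled in the introduction, so the entire difficulty lies in the Betti-number inequality.

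The first reduction is to monomial ideals. Replacing $I$ by its initial ideal with respect to, say, the reverse-lex term order, weakly increases every $b_{i,j}$; and since $P$ is already monomial, the initial ideal still contains $P$. Hence it suffices to prove the Betti-number inequality when $I$ is a monomial ideal containing $P$.

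The second and main reduction is via polarization. Polarization preserves all graded Betti numbers, sends $P$ to the squarefree ``staircase'' ideal $\ipo = (x_{i,1}x_{i,2}\cdots x_{i,e_i} : 1 \le i \le r)$ inside a larger polynomial ring $\spo$, and sends a monomial $I \supseteq P$ to a squarefree monomial ideal containing $\ipo$. The problem thus becomes: among squarefree monomial ideals of $\spo$ containing $\ipo$ with a fixed Hilbert function, which has the largest graded Betti numbers? I would then adapt a combinatorial shifting operation---in the spirit of the symmetric/exterior shifting of Aramova-Herzog-Hibi---so that it preserves the Hilbert function, weakly increases every $b_{i,j}$, and crucially preserves the rigid substructure $\ipo$. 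Iterating the operation should drive the ideal toward the polarization of a lex-plus-powers ideal, which then depolarizes back to $L + P$ as required.

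The principal obstacle is the design of such a shifting operation in the polarized setting. Standard shifting operations have no reason to preserve containment of the fixed subideal $\ipo$, and ad hoc modifications risk destroying either the Hilbert-function preservation or the monotonicity of Betti numbers. Verifying that one can define operations on squarefree monomial ideals containing $\ipo$ which simultaneously (a) preserve the Hilbert function, (b) do not decrease any $b_{i,j}$, (c) respect $\ipo$, and (d) converge after finitely many steps to the polarization of a lex-plus-powers ideal, is where I expect the technical heart of the proof to lie.
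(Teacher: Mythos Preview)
Your outline correctly isolates the case $F=P$ and correctly makes the first reduction to monomial ideals containing $P$. But the second step---full polarization followed by squarefree shifting in $\spo$---is not the route the paper takes, and as written it has a real gap.

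The paper does not polarize $I$ globally and then work in the squarefree world. Instead it works directly with non-squarefree monomial ideals of $S$: it \emph{generalizes} combinatorial shifting to arbitrary monomial ideals via operators $\Shift_{a,b,t}$ (Section~5), proves these preserve the Hilbert function and increase graded Betti numbers (Theorem~\ref{strongshiftbetti}), and then---this is the delicate part---shows that shifting can be made compatible with adding back the pure power $b^{e_b}$ (Section~6, Proposition~\ref{shiftpluspbetti}). Combined with $\{a,b\}$-compression (Section~7), repeated application produces a \emph{Borel}-plus-$P$ ideal with the same Hilbert function and larger Betti numbers. The final passage from Borel-plus-$P$ to lex-plus-$P$ is not achieved by further shifting at all: it is a separate, previously known theorem of Murai (Theorem~\ref{boreltolex}), proved using the Eliahou--Kervaire resolution and the colon formula~(\ref{thecolons}). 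Polarization appears in the paper only locally, as a device inside individual lemmas (e.g.\ the proof of Theorem~\ref{HM} and Lemma~\ref{isnice}), never as the global framework.

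The specific difficulty with your approach is the one you flag yourself but do not resolve: the polarization $(L+P)^{\mathrm{po}}$ has no intrinsic extremal characterization among squarefree ideals of $\spo$ containing $P^{\mathrm{po}}$. Squarefree shifting in $\spo$ converges to a \emph{shifted} (squarefree Borel) ideal, but there is no reason this should equal, or even be comparable to, the polarization of the lex-plus-$P$ ideal. So your steps (c) and (d) are not merely ``technical heart'' to be filled in---they are the whole problem, and the paper's solution is precisely to avoid them by staying unpolarized, targeting Borel-plus-$P$ rather than lex-plus-$P$, and invoking Murai's theorem for the last step.
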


Both conjectures are open.  In particular, the Lex-Plus-Powers
Conjecture has been open even if $F$ consists of pure powers of the
variables (i.e., $F=P$).  
The main result of this paper is that 
the Lex-Plus-Powers 
Conjecture holds if $F$ consists of monomials, 
a case in which the Eisenbud-Green-Harris Conjecture is a
straightforward consequence of Clements and Lindstr\"om's Theorem.


For a subset $\tau$ of the variables, put
$\mathbf{x}_{\tau}=\prod_{x_{i}\in\tau} x_{i}^{e_{i}}$.  In
\cite{MPS}, Mermin, Peeva, and Stillman use mapping cones to give a
formula for the Betti 
numbers of a monomial-plus-$P$ ideal in terms of its colon ideals:
If $M$ is a monomial ideal not containing any $x_{i}^{e_{i}}$, then
we have
\begin{equation}\label{thecolons}
b_{i,j}(M+P)=\sum_{\tau}b_{i-|\tau|,j-\deg
  \mathbf{x_{\tau}}}(M:\mathbf{x}_{\tau}).
\end{equation}
Using this formula and the Eliahou-Kervaire resolution \cite{EK},
Murai shows in \cite{Mu1} that the Lex-Plus-Powers Conjecture holds
for Borel-plus-$P$ ideals:

\begin{theorem}[\cite{Mu1}]\label{boreltolex}
Suppose that $B$ is Borel, and let $L$ be a lex ideal such that $L+P$
has
the same Hilbert function as $B+P$.  Then for all $i,j$ we
have $b_{i,j}(L+P)\geq b_{i,j}(B+P)$.  
\end{theorem}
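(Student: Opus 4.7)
The plan is to apply the Mermin-Peeva-Stillman colon formula (\ref{thecolons}) to both $B+P$ and $L+P$, reducing the desired inequality to a comparison of summed Betti numbers of colon ideals, and then to carry out the comparison using the explicit Eliahou-Kervaire formula. This is available on both sides because the Borel property is preserved by colons of monomials (making each $B:\mathbf{x}_\tau$ Borel), the lex property is likewise preserved (making each $L:\mathbf{x}_\tau$ lex, hence Borel), and lex ideals are Borel.

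First, I would absorb into $P$ any pure power $x_i^{e_i}$ that happens to sit among the minimal generators of $B$ or $L$, so that formula (\ref{thecolons}) applies to both. It then suffices to establish
\[
\sum_{\tau \subseteq \{x_1,\ldots,x_r\}} b_{i-|\tau|,\, j-\deg \mathbf{x}_\tau}(L:\mathbf{x}_\tau) \;\geq\; \sum_{\tau \subseteq \{x_1,\ldots,x_r\}} b_{i-|\tau|,\, j-\deg \mathbf{x}_\tau}(B:\mathbf{x}_\tau).
\]
The Eliahou-Kervaire formula rewrites each term $b_{p,q}(J:\mathbf{x}_\tau)$, for $J \in \{B, L\}$, as $\sum_u \binom{m(u)-1}{p}$, where $u$ ranges over the minimal monomial generators of $J:\mathbf{x}_\tau$ of degree $q$ and $m(u)$ denotes the largest index of a variable dividing $u$. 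The task is then a combinatorial comparison between the two multisets of triples $(\tau,\, \deg u,\, m(u))$ arising from $B$ and from $L$.

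The main obstacle is that the individual colon ideals $B:\mathbf{x}_\tau$ and $L:\mathbf{x}_\tau$ generally have different Hilbert functions, so one cannot apply the Bigatti-Hulett-Pardue bound slot by slot. My intended approach is induction on $r$: splitting each sum according to whether $x_r \in \tau$ isolates the contribution of the pure power $x_r^{e_r}$ and reduces the inductive step to a comparison between $B+P$ and a single Clements-Lindström-type compression. The base case $r=0$ is exactly Bigatti-Hulett-Pardue. The inductive step should amount to producing, for each $j$, an injection from the $(\tau, u)$-data on the $B$-side into that on the $L$-side that is nondecreasing in $m(u)$; since $\binom{m-1}{p}$ is monotone in $m$, this delivers the inequality after summing. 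Constructing this injection and verifying its compatibility with Clements-Lindström compression after passing to colons is where I expect the bulk of the technical difficulty to lie.
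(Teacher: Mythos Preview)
The paper does not give its own proof of Theorem~\ref{boreltolex}; it is quoted as a result from \cite{Mu1}. The one-line description the paper offers of Murai's argument---``Using this formula [i.e., the colon formula~(\ref{thecolons})] and the Eliahou--Kervaire resolution''---is exactly the framework you set up, so at the level of strategy your outline matches the intended proof.

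That said, what you have written is an outline rather than a proof. You correctly observe that the colon ideals $B:\mathbf{x}_\tau$ and $L:\mathbf{x}_\tau$ need not share a Hilbert function, so a termwise application of Bigatti--Hulett--Pardue is unavailable; and you propose to repair this by an induction on $r$ together with an injection on the Eliahou--Kervaire data $(\tau,\deg u,m(u))$ that is monotone in $m(u)$. You then say that constructing this injection ``is where I expect the bulk of the technical difficulty to lie''---and indeed that is precisely the content of \cite{Mu1}. So the scaffolding is right, but the proposal stops at the step where the actual work begins; nothing here yet shows that such an injection exists or how Clements--Lindstr\"om enters to control the passage from $B$ to $L$ after coloning. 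A minor additional point: your reduction ``absorb into $P$ any pure power $x_i^{e_i}$ among the minimal generators of $B$ or $L$'' needs care, since doing so for $B$ and for $L$ separately could change $P$ differently on the two sides; you should argue instead that one may assume neither $B$ nor $L$ has $x_i^{e_i}$ as a minimal generator without altering the ideals $B+P$ and $L+P$.
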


Thus, the Lex-Plus-Powers conjecture would be proved by reduction
to the Borel case:

\begin{question}\label{thequestion}
Let $I$ and $F$ be as in Conjecture \ref{EGH}.  Does
there exist a Borel ideal $B$ such that $B+P$ has the same Hilbert function
as, and larger Betti numbers than, $I$?
\end{question}

In Theorems \ref{lppcharzero} and  \ref{lpp}, we give a positive
answer to Question 
\ref{thequestion} in the case that $F$
consists of monomials.



In section 2, we introduce notation which will be used throughout the
paper.  
In section 3, using a walk on the Hilbert scheme, we prove the
Lex-Plus-Powers Conjecture for 
ideals containing powers of the variables in characteristic zero.
This approach yields a short proof, but does not work in positive
characteristic.  


In sections 4 through 8, we give a characteristic-free proof of the
same result.  While the proof is long, we introduce some new
techniques to study Hilbert functions and Betti numbers of monomial
ideals, including Theorem \ref{keylemma}, a formula for the
multigraded Betti numbers of any monomial ideal.  
Our main tool is a generalization of the combinatorial
``shifting'' operation 
of Erd\"os, Ko, and Rado \cite{EKR}. 

  Shifting is an operation which associates to every simplicial complex
another complex with the same face vector and certain special
properties, called ``shifted''.  (See \cites{AHH2, HM}.)  We
generalize combinatorial shifting to monomial ideals, and show that Betti
numbers are nondecreasing under 
this operation.   We use shifting and compression
(defined in \cite{Me3}) to compare the Betti numbers of an ideal $I$
containing $P$ with those 
of a Borel-plus-$P$ ideal.


We also consider some related problems.
In section 9, we show that the Betti numbers of $I$ are obtained from
those of the lex-plus-powers ideal $L+P$ by
consecutive cancellations.
In section 10, we briefly discuss some open problems.

\medskip

\noindent\textbf{Acknowledgements.}  
The authors thank Chris Francisco, Takayuki Hibi, Craig Huneke, Irena
Peeva, and the 
algebra group at Kansas for 
encouragement and helpful discussions.


\section{Background and Notation}
We recall some notation and results that will be used throughout the paper.

\begin{metadefinition} For a property $(\ast)$ of ideals, and an ideal
  $I$ containing $P$, we say that $I$ is \emph{$(\ast)$-plus-$P$} 
  if there exists an ideal $\hat{I}$
  satisfying $(\ast)$ such that $I=\hat{I}+P$.  In this paper, we will
  consider homogeneous-plus-$P$, lex-plus-$P$, compressed-plus-$P$,
  Borel-plus-$P$, 
  and shifted-plus-$P$ ideals.  
\end{metadefinition}

\begin{notation}  The ring $S$ is graded by setting $\deg x_{i}=1$ for
  all $i$.  All the $S$-modules we consider will inherit a natural
  grading from $S$; if $M$ is a graded module we write $M_{d}$ for the
  $\bbbk$-vector subspace spanned by the homogeneous forms of
  degree $d$ in 
  $M$.  We denote shifts in the grading in the usual way; that is,
  $M(-d)$ is the module isomorphic to $M$ but with all degrees
  increased by $d$, so that, as vector spaces, $M(-d)_{e}=M_{e-d}$.  
\end{notation}

\begin{definition} We will use both the graded \emph{lexicographic}
  and \emph{reverse lexicographic} monomial orderings.  Let $u$ and
  $v$ be monomials of the same 
  degree, and write $u=x_{1}^{e_{1}}x_{2}^{e_{2}}\cdots x_{n}^{e_{n}}$
  and $v=x_{1}^{f_{1}}x_{2}^{f_{2}}\cdots x_{n}^{f_{n}}$.  We say that
  $u$ is \emph{greater than} $v$ \emph{with respect to the
  lexicographic order}, or $u>_{\rm{lex}}v$, if there exists an $i$
  such that $e_{i}>f_{i}$ and $e_{j}=f_{j}$ for all $j<i$.  We say that
  $u$ is \emph{greater than} $v$ \emph{with respect to the
  reverse lexicographic order}, or $u>_{\mathrm{rev}}v$, if there
  exists an $i$ 
  such that $e_{i}<f_{i}$ and $e_{j}=f_{j}$ for all $j>i$.
\end{definition}

\begin{definition}  We say that a monomial ideal $L\subset S$ is
  \emph{lex} or \emph{lexicographic} if, for all degrees $d$, the
  vector space $L_{d}$ is 
  generated by an initial 
  segment in the lexicographic order.  That is, if $u$ and $v$ are monomials of
  the same degree such that
  $u<_{\mathrm{lex}} v$ and $u\in L$, then we must have $v\in
  L$ as well. 
\end{definition}

\begin{definition} We can use these orderings to compare monomial
  ideals as well.  Let $\mathcal{I}=\{u_{1},\cdots,u_{s}\}$ and
  $\mathcal{J}=\{v_{1},\cdots, v_{s}\}$ be sets of degree $d$
  monomials, each ordered reverse lexicographically (so
  $u_{i}>_{\mathrm{rev}}u_{j}$ and $v_{i}>_{\mathrm{rev}}v_{j}$
  whenever $i<j$).  Then we say that $\mathcal{I}$ \emph{is
  reverse lexicographically greater than} $\mathcal{J}$,
  $\mathcal{I}>_{\mathrm{rev}}\mathcal{J}$, if there exists an $i$
  such that $u_{i}>_{\mathrm{rev}}v_{i}$ and $u_{j}=v_{j}$ for all
  $j<i$.  For monomial ideals $I\neq J$ having the same Hilbert
  function, and for a degree $d$, let
  $\{I_{d}\}$ and $\{J_{d}\}$ be the sets of degree $d$ 
  monomials in $I$ and $J$, respectively.  We say that $I$ \emph{is
  reverse lexicographically greater than} $J$ if, for all $d$,
  $I_{d}=J_{d}$ or $\{I_{d}\}$ is
  reverse lexicographically greater than $\{J_{d}\}$.  
\end{definition}

\begin{lemma}
\label{2no1}
Let $\mathcal{I}=\{u_1,\dots,u_t\}$
and $\mathcal{J}=\{v_1,\dots,v_t\}$ be sets of monomials, all with the
same degree.
If $u_k \geq_{\mathrm{rev}} v_k$ for all $k$, then
$\mathcal{I}$ is reverse lexicographically greater than or equal to
$\mathcal{J}$. 
\end{lemma}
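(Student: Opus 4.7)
The plan is to unpack the definition of $>_{\mathrm{rev}}$ on sets and identify the first index at which the two sorted sequences disagree.

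First, I would dispense with the trivial case: if $u_k = v_k$ for every $k$, then $\mathcal{I}$ and $\mathcal{J}$ are equal as sets and the conclusion (greater than or equal) holds automatically. So I may assume that some pair $(u_k,v_k)$ differs, and let
\[
i = \min\{k : u_k \neq v_k\}.
\]

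Next, I would verify the two conditions in the definition of $\mathcal{I} >_{\mathrm{rev}} \mathcal{J}$. By the minimality of $i$, we have $u_j = v_j$ for all $j < i$, which is the first condition. For the second, the hypothesis gives $u_i \geq_{\mathrm{rev}} v_i$, and since $u_i \neq v_i$ this inequality is strict: $u_i >_{\mathrm{rev}} v_i$. These are exactly the two requirements for $\mathcal{I} >_{\mathrm{rev}} \mathcal{J}$, so $\mathcal{I} \geq_{\mathrm{rev}} \mathcal{J}$.

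There is no real obstacle here; the argument is a direct application of the definitions. The only feature worth flagging is that one uses both the reverse-lex sortedness of each of the two sequences and the coordinatewise hypothesis $u_k \geq_{\mathrm{rev}} v_k$: without sortedness a coordinatewise comparison need not reflect the set comparison, but with the lists sorted the first mismatch is automatically the first position at which the set-ordering is decided.
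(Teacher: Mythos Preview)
Your argument assumes that the listings $u_1,\dots,u_t$ and $v_1,\dots,v_t$ are already in decreasing revlex order, but the lemma carries no such hypothesis. The comparison $\mathcal{I}>_{\mathrm{rev}}\mathcal{J}$ is \emph{defined} via the sorted listings of the two sets, whereas the hypothesis $u_k\geq_{\mathrm{rev}} v_k$ refers to the given indexing, which may be arbitrary. Thus your ``first index $i$ where $u_i\neq v_i$'' need not be the first index where the sorted sequences differ, and the two conditions you verify are not the conditions appearing in the definition. For instance, with monomials $a<_{\mathrm{rev}}b<_{\mathrm{rev}}c<_{\mathrm{rev}}d$ and $(u_1,u_2,u_3)=(d,b,c)$, $(v_1,v_2,v_3)=(d,a,c)$, the hypothesis holds and your argument locates the difference at index $2$; but after sorting, the sequences are $(d,c,b)$ and $(d,c,a)$, which first differ at index $3$. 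That the paper intends the unsorted reading is clear from its proof, which explicitly allows the minima $u_p$ and $v_q$ to sit at different indices $p\neq q$.

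The paper proceeds by induction on $t$: it removes the smallest element $u_p$ of $\mathcal{I}$ and the smallest element $v_q$ of $\mathcal{J}$, uses the chain $u_q\geq_{\mathrm{rev}} u_p\geq_{\mathrm{rev}} v_p\geq_{\mathrm{rev}} v_q$ to manufacture a coordinatewise-dominating bijection on the remaining $t-1$ elements (pair $u_q$ with $v_p$ when $p\neq q$), and concludes by appending $u_p\geq_{\mathrm{rev}} v_q$ at the tail of the sorted lists. Your approach can be repaired by inserting the standard rearrangement step: if some bijection satisfies $u_k\geq_{\mathrm{rev}} v_k$ for all $k$, then so does the bijection matching the $k$th largest of $\mathcal{I}$ to the $k$th largest of $\mathcal{J}$ (otherwise the $k$ largest elements of $\mathcal{J}$ would require $k$ preimages among fewer than $k$ sufficiently large elements of $\mathcal{I}$). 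Once that is established, your two-line argument on the sorted lists is valid.
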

\begin{proof}  We use induction on $t$.  If $t=1$, the statement is
  immediate.  Otherwise, let $u_{p}$ and $v_{q}$ be the smallest
  elements of $\mathcal{I}$ and $\mathcal{J}$, respectively, with
  respect to the reverse lex order.  Then, by assumption, we have
  $u_{q}\geq_{\mathrm{rev}} u_{p}\geq_{\mathrm{rev}} v_{p}
  \geq_{\mathrm{rev}}v_{q}$, so we can apply the inductive hypothesis
  to get that $\mathcal{I}\smallsetminus \{u_{p}\}$ is
  reverse lexicographically greater than or equal to
  $\mathcal{J}\smallsetminus 
  \{v_{q}\}$.  Since $u_{p}$ and $v_{q}$ are the smallest elements of
  $\mathcal{I}$ and $\mathcal{J}$, it follows that $\mathcal{I}$ is
  reverse lexicographically greater than or equal to $\mathcal{J}$ as desired.
\end{proof}

Term orders allow us to associate to any ideal of $S$ a monomial
ideal, called its initial ideal.  In this paper we consider only
reverse lexicographic initial ideals, but the definition below works
with any term order.

\begin{definition} 
For a (homogeneous) polynomial $g$, write $g=\sum
  a_{m}m$ with 
  $a_{m}\in \bbbk$ and $m$ ranging over the monomials.  The
  \emph{initial monomial} of $g$, $\init_{\mathrm{rev}}(g)$, is the
  maximal $m$ in the reverse lexicographic order such that $a_{m}$ is
  nonzero.  For an (homogeneous) ideal $I$, the \emph{initial ideal}
  of $I$ is the monomial ideal generated
  by the initial monomials of every form in $I$,
  $\init_{\mathrm{rev}}(I)=(\init_{\mathrm{rev}}(g):g\in I)$.  It is
  well-known that $\init_{\mathrm{rev}}(I)$ has the same Hilbert
  function as $I$ and larger graded Betti numbers.
\end{definition}

\begin{definition} For a graded module $M$, the \emph{Hilbert function} of
  $M$ assigns to each degree $d$ the dimension of the vector space
  $M_{d}$.  We write $\Hilb(M)(d)=\dim_{\bbbk}M_{d}$.  
\end{definition}

\begin{definition}A \emph{free resolution} of the graded module $M$ is
  an exact sequence 
\[
\mathbb{F}:\cdots\to F_{1}\to F_{0}\to M\to 0
\]
such that each $F_{i}$ is a free $S$-module.  We say that $\mathbb{F}$
is the \emph{minimal free resolution} of $M$ if each $F_{i}$ has
minimum possible rank.  Equivalently, $\mathbb{F}$ is minimal if, for
all $i$, the nonzero entries of the matrix associated to the map
$d_{i}:F_{i}\to F_{i-1}$ are contained in the homogeneous maximal
ideal, $(x_{1},\cdots, x_{n})$.  Up to an isomorphism of complices,
every finitely generated module has a unique minimal free resolution.
\end{definition}

\begin{definition}  If $\mathbb{F}$ is the minimal free resolution of
  $M$, the \emph{Betti numbers} of $M$ are given by $b_{i}(M)=\rk
  F_{i}$.  If we decompose the $F_{i}$ as graded free modules,
  $F_{i}=\displaystyle\bigoplus_{j\in \mathbb{Z}} S(-j)^{b_{i,j}}$, then the $b_{i,j}$
  are the \emph{graded Betti numbers} of $M$.  
\end{definition}


\begin{definition} A monomial ideal $I$ is \emph{Borel} or
  \emph{$0$-Borel-fixed} if it satisfies the property:
\begin{center}
If $fx_{j}\in I$ and $i<j$, then
$fx_{i}\in I$.
\end{center}
\end{definition}

Borel ideals are important because they occur (in characteristic zero)
as generic initial ideals \cites{BS,Ga}.  They are combinatorially useful
because they are minimally resolved by the Eliahou-Kervaire
resolution \cite{EK}, which gives explicit formulas for their Betti
numbers.  
Borel ideals can also be attained via a characteristic-free technique
called compression.

\begin{definition}
Fix a subset $\mathcal{A}\subset \{x_{1},\cdots, x_{n}\}$.  Any
monomial ideal $I$ decomposes (as a $\bbbk$-vector space) into a direct
sum over monomials \allowbreak $f\in \bbbk[\{x_{1},\cdots,
  x_{n}\}\smallsetminus\nobreak\mathcal{A}]$, 
$\displaystyle I=\bigoplus_{f}fV_{f}$.  Each $V_{f}$ is an ideal of
$\bbbk[\mathcal{A}]$.  If the $V_{f}$ are all lex ideals in
$\bbbk[\mathcal{A}]$, we say that $I$ is
\emph{$\mathcal{A}$-compressed}.   

Set $W_{f}$ equal to the lex ideal of $\bbbk[\mathcal{A}]$ having the same
Hilbert function as $V_{f}$.  We say that $J=\bigoplus fW_{f}$ is the
\emph{$\mathcal{A}$-compression} of $I$.
\end{definition}

Compression and compressed ideals have been used by Macaulay and
others \cites{CL,Ma,Me1,Me3,MP1,MP2,MPS,HM} to study Hilbert functions and
Betti numbers.
In \cite{Me3}, Mermin proves the following:

\begin{theorem}[\cite{Me3}]\label{compressionbetti}
Let $N$ be a monomial ideal, and let $T$ be the
 $\mathcal{A}$-compression of $N$.  Then:
\begin{itemize}
\item[(i)] $T$ is an ideal.
\item[(ii)] $N$ and $T$ have the same Hilbert function.
\item[(iii)] $b_{i,j}(T)\geq b_{i,j}(N)$ for all $i$ and $j$.
\item[(iv)] $N$ is Borel if and only if $N$ is
  $\{x_{i},x_{j}\}$-compressed for all $i$ and $j$.
\end{itemize}
\end{theorem}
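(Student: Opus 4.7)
The plan is to dispatch (ii), (iv), and (i) as structural consequences of the direct-sum decomposition $N = \bigoplus_f fV_f$, and reserve the real work for (iii). Part (ii) is immediate: in each degree $d$ both $N$ and $T$ have dimension $\sum_f \Hilb(V_f)(d-\deg f)$, since $\Hilb(W_f)=\Hilb(V_f)$ by Macaulay's theorem applied in $\bbbk[\mathcal{A}]$. For (iv), observe that a monomial ideal in the two-variable ring $\bbbk[x_i,x_j]$ (with $i<j$) is Borel if and only if it is lex, since in two variables the two notions of ``initial segment'' coincide; consequently the condition that each $V_f$ (with $f$ a monomial outside $\{x_i,x_j\}$) is lex in $\bbbk[x_i,x_j]$ is exactly the Borel condition on $N$ applied to the pair $(i,j)$, and ranging over all such pairs recovers the full Borel condition.

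For (i), closure of $T$ under multiplication by $x_k \in \mathcal{A}$ is automatic because each $W_f$ is an ideal of $\bbbk[\mathcal{A}]$. For $x_k \notin \mathcal{A}$, the fact that $N$ is an ideal forces $V_f \subseteq V_{fx_k}$ as subsets of $\bbbk[\mathcal{A}]$, hence $\Hilb(W_f)(d) \leq \Hilb(W_{fx_k})(d)$ for every $d$. Since the lex ideals of $\bbbk[\mathcal{A}]$ with a given Hilbert function are initial lex segments in each degree, they are totally ordered by inclusion consistently with their dimensions, so $W_f \subseteq W_{fx_k}$; this gives $x_k W_f \subseteq W_{fx_k} \subseteq T$.

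For (iii) I would proceed by term-order degeneration adapted to the subring $\bbbk[\mathcal{A}]$. Apply a sufficiently generic linear change of variables $\phi \in \mathrm{GL}(\mathcal{A})$, extended by the identity on the remaining variables, so that $b_{i,j}(\phi(N))=b_{i,j}(N)$. Then take the initial ideal of $\phi(N)$ with respect to a block term order which first compares monomials by the variables outside $\mathcal{A}$ (so each monomial $f$ outside $\mathcal{A}$ indexes a block) and refines by the lex order on $\mathcal{A}$. Passing to this initial ideal weakly increases all graded Betti numbers, and one hopes that the result coincides with $T$: inside each block $f$, the initial monomials should form the unique lex segment of $\bbbk[\mathcal{A}]$ of the correct dimension in each degree, namely $W_f$. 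In positive characteristic the block initial ideal is only Borel in $\bbbk[\mathcal{A}]$, so one finishes by invoking Bigatti--Hulett--Pardue inside each block to pass from Borel-on-each-block to lex-on-each-block.

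The main obstacle is the interaction between the global initial-ideal operation on $S$ and the block decomposition $N = \bigoplus_f fV_f$: a generic change of variables on $\mathcal{A}$ mixes generators within each block, and one must verify carefully that, for the chosen block order, the standard monomials of $\phi(N)$ still respect the decomposition by $f$, so that the initial ideal is the direct sum of the blockwise initial ideals. This ``blockwise compatibility'' of Gr\"obner degeneration is the chief technical content of the argument, and is where I expect the bulk of the work in \cite{Me3} to reside.
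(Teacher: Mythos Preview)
The paper does not prove this theorem; it is quoted from \cite{Me3} and used as a black box, so there is no proof in the paper to compare against. I will therefore evaluate your proposal on its own terms.

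Your treatments of (i), (ii), and (iv) are correct and standard. For (iii), however, there are two problems, and they are in some sense the reverse of what you anticipate.

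First, the ``blockwise compatibility'' you flag as the main obstacle is in fact easy. Your change of variables $\phi$ acts only on $\bbbk[\mathcal{A}]$ and fixes each $f$, so $\phi(N)=\bigoplus_f f\,\phi(V_f)$ as a direct sum of $\bbbk[\mathcal{A}]$-modules; any term order that first compares the $f$-part will then give $\init(\phi(N))=\bigoplus_f f\cdot\init(\phi(V_f))$ immediately. There is nothing delicate here.

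Second, the step you pass over is where the argument breaks. Even in characteristic zero, the blockwise generic initial ideal is only Borel in $\bbbk[\mathcal{A}]$, not lex; your ``hope that the result coincides with $T$'' is unfounded for $|\mathcal{A}|\geq 3$. Your proposed fix, ``invoke Bigatti--Hulett--Pardue inside each block,'' does not work as stated: BHP compares Betti numbers over $\bbbk[\mathcal{A}]$, but the quantities $b_{i,j}(N)$ and $b_{i,j}(T)$ are Betti numbers over $S$, and these are \emph{not} sums of the blockwise Betti numbers of the $V_f$ (the multiplication maps $V_f\hookrightarrow V_{fx_k}$ for $x_k\notin\mathcal{A}$ create interactions). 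So replacing each block's Borel ideal by its lex ideal is an operation on $S$-ideals whose effect on $S$-Betti numbers you have not controlled; this is precisely the content of (iii), so the argument is circular.

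A way to repair the strategy is to reduce to $|\mathcal{A}|=2$, where Borel and lex coincide in $\bbbk[a,b]$ and your degeneration does land on the compression; one then argues that iterated two-variable compressions reach the full $\mathcal{A}$-compression. This reduction, together with a characteristic-free replacement for the generic-initial-ideal step (e.g.\ a Pardue-style polarization/distraction argument in the two-variable case), is closer to how \cite{Me3} proceeds.
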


\begin{definition}[\textbf{Polarization}]
For ease of notation, we define a simplified version of polarization.
For a fuller version of the theory, see e.g.\ \cite{Ei2}*{Exercise 3.24}.
Fix a variable $b=x_{k}$.  For a monomial $u=\prod x_{i}^{f_{i}}$, the
$b^{\mathrm{th}}$ \emph{polarization} of $u$ is
\[
\pol_{b}(u)=\displaystyle\left(\prod_{x_{i}\neq
  b}x_{i}^{f_{i}}\right)(bc_{1}\cdots c_{f_{k}-1}),
\]
 where the $c_{i}$ are new variables (and $\pol_{b}(u)=u$ if $b$ does
 not divide $u$).  

Let $s$ be sufficiently large (e.g., the largest power
of $b$ occuring in any generator of any ideal under consideration), and set
$\spo=\bbbk[x_{1},\cdots,x_{n},c_{1},\cdots, c_{s-1}]$.  (We order the
variables so
that $x_{n}>_{\mathrm{rev}}c_{k}$ for all $k$.)
For a
monomial ideal $I$, set $\gens(I)$ equal to the (unique) set of
minimal monomial generators of $I$.  Then for $u\in
\gens(I)$, we have $\pol_{b}(u)\in \spo$.  The \emph{polarization} of
$I$ is the ideal $\ipo$ of $\spo$ generated by these monomials, 
\[
\ipo=\left(\pol_{b}(u):u\in\gens(I)\right).
\]
\end{definition}

\noindent A monomial ideal $I\in S$ is naturally associated to two ideals of
$\spo$, namely $\ipo$ and $I\spo$.  We have the following:

\begin{proposition}\label{polarlemma}\  
\begin{itemize}
\item[(i)] For all $i$ and $j$,
  $b_{i,j}(I)=b_{i,j}(\ipo)=b_{i,j}(I\spo)$.  (Here, $b_{i,j}(\ipo)$
  and $b_{i,j}(I\spo)$ are taken over $\spo$.)
\item[(ii)] Let $I$ and $J$ be monomial ideals of $S$.  Then $\ipo$
  and $I\spo$ have the same Hilbert function, and $I\spo$ and $J\spo$ have
  the same Hilbert function if and only if $I$ and $J$ have the same
  Hilbert function. 
\item[(iii)] Let $u\in I$ be a monomial of $S$ such that
  $\pol_{b}(u)\in \spo$.  Then $\pol_{b}(u)\in
  \ipo$.
\end{itemize}
\end{proposition}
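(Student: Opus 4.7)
The plan is to treat the three items in sequence, relying on two standard principles: that (partial) polarization is a deformation killing a regular sequence of linear forms, and that $\spo$ is a polynomial extension of $S$ and hence flat over it.

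For the equality $b_{i,j}(I)=b_{i,j}(\ipo)$ in (i), I would show that $c_1-b,\ldots,c_{s-1}-b$ form a regular sequence on $\spo/\ipo$ and that modding them out recovers $S/I$, i.e.
\[
\spo/\bigl(\ipo+(c_1-b,\ldots,c_{s-1}-b)\bigr)\cong S/I.
\]
Regularity follows because each $c_k$ appears only as a linear factor in the generators of $\ipo$, and the isomorphism is immediate from the definition of $\pol_b$. Quotienting by a regular sequence of linear forms preserves all graded Betti numbers, giving the desired equality. For $b_{i,j}(I)=b_{i,j}(I\spo)$, I would tensor the minimal graded free resolution of $I$ over $S$ with $\spo=S[c_1,\ldots,c_{s-1}]$, which is free over $S$; the resulting complex is a graded free resolution of $I\spo$, and minimality persists because the new variables $c_k$ are indeterminates.

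For (ii), the equality $\Hilb(\ipo)=\Hilb(I\spo)$ is a formal consequence of (i): two graded modules with identical graded Betti tables have identical Hilbert series. For the last assertion, flatness of $\spo$ over $S$ yields
\[
\Hilb(I\spo)(t)=\Hilb(I)(t)\cdot \Hilb\bigl(\bbbk[c_1,\ldots,c_{s-1}]\bigr)(t),
\]
and since multiplication by the Hilbert series of a polynomial ring is injective on formal power series, $I\spo$ and $J\spo$ agree in Hilbert function precisely when $I$ and $J$ do.

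For (iii), write $u=vw$ with $v\in\gens(I)$ and $w\in S$ a monomial, and let $f$ and $g$ be the exponents of $b$ in $v$ and $w$, respectively. The definition of $\pol_b$ gives
\[
\frac{\pol_b(vw)}{\pol_b(v)}=\frac{w}{b^g}\cdot c_f c_{f+1}\cdots c_{f+g-1},
\]
which is a genuine monomial of $\spo$ as long as $f+g-1\leq s-1$; but this inequality is exactly the hypothesis $\pol_b(u)\in\spo$. Hence $\pol_b(v)$ divides $\pol_b(u)$ in $\spo$, and since $\pol_b(v)\in\gens(\ipo)$, we conclude $\pol_b(u)\in\ipo$. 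The main obstacle I anticipate is purely bookkeeping rather than depth: all three claims are standard polarization folklore, but because the text uses a partial polarization over a single variable $b$ (rather than the full multivariate one), one must carefully check that the regular-sequence argument of (i) really does go through with the linear forms $c_k-b$, and that the grading conventions align when passing between resolutions of ideals and of their quotient rings.
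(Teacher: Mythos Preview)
Your proposal is correct and matches the paper's approach: the paper simply cites \cite{BH}*{Lemma 4.2.16} for (i), derives (ii) from (i) via the formula $\Hilb(I)(d)=\sum_{i,j}(-1)^{i}b_{i,j}(I)\Hilb(S)(d-j)$, and for (iii) notes that $\pol_b(v)\mid\pol_b(u)$ whenever $v\mid u$. One small bookkeeping correction in (iii): your displayed quotient is off when $f=0$ (there is no $c_0$; the actual ratio is $(w/b^{g-1})c_1\cdots c_{g-1}$), but divisibility still holds, so the argument is unaffected.
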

\begin{proof}
(i) is \cite{BH}*{Lemma 4.2.16}, and (ii) is immediate from (i) and
  the formula
\[
\Hilb(I)(d)=\sum_{i,j} \left((-1)^{i}b_{i,j}(I)\Hilb(S)(d-j)\right).
\]
For (iii), observe that $\pol_{b}(v)$ divides $\pol_{b}(u)$ whenever
$v$ divides $u$.
\end{proof}

\section{The proof in characteristic zero}

In this section we prove the following:

\begin{theorem}\label{lppcharzero} Let $\bbbk$ have characteristic zero, and
  let $F=(f_{1},\cdots, f_{r})$ be a
  regular sequence of monomials, in degrees $e_{1}\leq\cdots\leq
  e_{r}$. Set $P=(x_{1}^{e_{1}},\cdots, x_{r}^{e_{r}})$.  If $I$ is
  any ideal containing $F$, then there exists a lex-plus-$P$ ideal $L$
  such that $I$ and $L$ have the same Hilbert function and
  $b_{i,j}(L)\geq b_{i,j}(I)$ for all $i$ and $j$.  
\end{theorem}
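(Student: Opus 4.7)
The strategy is to produce, in characteristic zero, a Borel-plus-$P$ ideal with the same Hilbert function as $I$ and Betti numbers at least those of $I$; Theorem~\ref{boreltolex} then delivers the desired lex-plus-$P$ ideal. The plan involves two successive walks on the Hilbert scheme.

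First, I would reduce to the case $F = P$. Since $F = (f_{1}, \dots, f_{r})$ and $P = (x_{1}^{e_{1}}, \dots, x_{r}^{e_{r}})$ are both monomial complete intersections of degrees $e_{1} \le \cdots \le e_{r}$, they have the same Hilbert function. The naive interpolation $f_{i}(t) = (1-t) f_{i} + t\, x_{i}^{e_{i}}$ is a regular sequence for all but finitely many $t \in \mathbb{A}^{1}$, so it defines a flat family of complete intersections between $F$ and $P$. Lifting this to a flat family of ideals with $I$ as one fiber and a member containing $P$ as another yields an ideal $\tilde{I} \supseteq P$ of the same Hilbert function as $I$ and, by upper semi-continuity of Betti numbers on flat families, satisfying $b_{i,j}(\tilde{I}) \ge b_{i,j}(I)$.

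Next, given $\tilde{I} \supseteq P$, I would perform a second walk to reach a Borel-plus-$P$ ideal. The key point is that $P$, being a monomial ideal, is fixed under any diagonal torus action on $S$, so degenerating $\tilde{I}$ along a generic one-parameter subgroup of such a torus preserves the containment of $P$. Composing this with a further Galligo-type degeneration inside the parabolic stabilizer of $P$ in $\mathrm{GL}_{n}(\bbbk)$ yields, in characteristic zero, an ideal of the form $B + P$ with $B$ Borel, having the same Hilbert function as $\tilde{I}$ and $b_{i,j}(B + P) \ge b_{i,j}(\tilde{I})$. Applying Theorem~\ref{boreltolex} to this Borel-plus-$P$ ideal then produces a lex-plus-$P$ ideal $L$ with $b_{i,j}(L) \ge b_{i,j}(B + P) \ge b_{i,j}(I)$.

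The main obstacle is the second walk. A naive application of the reverse-lex generic initial ideal fails: even for $\tilde{I} = P$ in $\bbbk[x_{1}, x_{2}]$ one has $\mathrm{gin}_{\mathrm{rev}}(P) = (x_{1}^{e_{1}}, x_{1} x_{2}, x_{2}^{e_{1} + e_{2} - 1})$, which does not contain $P$. The Borel-fixing degeneration must therefore be performed relative to $P$, so that the containment $\tilde{I} \supseteq P$ is preserved in the limit; characteristic zero enters essentially through the Galligo-type theorem applied inside the stabilizer of $P$. Verifying that a generic degeneration within this parabolic produces enough ``Borel closure'' to invoke Theorem~\ref{boreltolex} is the technical heart of the argument.
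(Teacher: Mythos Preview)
Your high-level plan---reduce to a Borel-plus-$P$ ideal and invoke Theorem~\ref{boreltolex}---is exactly the paper's, but both reduction steps as written have real gaps.

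For the first reduction, ``lifting $I$ to a flat family'' along the interpolation of regular sequences is not explained: $I$ is an arbitrary homogeneous ideal containing $F$, with no canonical deformation attached to that of $F$, and it is not even clear which fiber would be the special one for semicontinuity to point the right way. The paper's Lemma~\ref{anymonregseq} handles this concretely and characteristic-free: a monomial regular sequence has pairwise disjoint supports, so one may relabel so that $x_i\in\supp(f_i)$, apply the triangular substitution $x_k\mapsto x_i+x_k$ for $x_k\in\supp(f_i)\smallsetminus\{x_i\}$ (making $x_i^{e_i}$ the revlex-leading term of the image of $f_i$), and take $\init_{\mathrm{rev}}$. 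Iterating over $i$ produces a monomial ideal containing $P$ with the same Hilbert function and larger Betti numbers.

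The second reduction fails for a structural reason you half-identify but do not overcome. The stabilizer of $P$ in $\mathrm{GL}_n(\bbbk)$ is \emph{not} a parabolic: for $i<j$ the substitution $x_j\mapsto x_j+cx_i$ sends $x_j^{e_j}$ to $(x_j+cx_i)^{e_j}$, whose cross-term $x_ix_j^{e_j-1}$ lies outside $P$ since $e_i\geq 2$. The identity component of the stabilizer is therefore only the diagonal torus, and a degeneration inside it yields at best a monomial ideal, never a Borel one---so there is no ``Galligo theorem relative to $P$'' to appeal to. The paper's workaround is genuinely different and is the content of Section~3: given a monomial-plus-$P$ ideal that is not $\{a,b\}$-compressed-plus-$P$, one polarizes with respect to $b$ and applies an automorphism of the polarized ring sending the new variables $c_k$ to $a-\zeta^k b+c_k$ for a primitive $e_b$-th root of unity $\zeta$. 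The identity $\prod_{k=0}^{e_b-1}(a-\zeta^k b)=a^{e_b}-b^{e_b}$ forces $b^{e_b}$ back into the revlex initial ideal, so the result (restricted to $S$) still contains $P$, has larger Betti numbers, and is strictly revlex-greater (Lemma~\ref{isnice}, Corollary~\ref{2no3hard}). Iterating over pairs terminates at a Borel-plus-$P$ ideal (Proposition~\ref{toborel}). This root-of-unity trick is precisely where characteristic zero is used, and it is not a routine ``relative gin''.
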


Throughout the section, $F=(f_{1},\cdots, f_{r})$ will be a regular
sequence of monomials in degrees $e_{1},\cdots, e_{r}$, and $P$ will
be the pure powers in these degrees, $P=(x_{1}^{e_{1}}, \cdots, x_{r}^{e_{r}})$.
First, we reduce to the case that $I$ is monomial-plus-$P$.

\begin{lemma}\label{anymonregseq}
Let $I$ be a homogeneous ideal containing $F$.  Then
  there exists a monomial ideal $J$ containing $P$ such that $I$ and
  $J$ have the same Hilbert function and $b_{i,j}(J)\geq
  b_{i,j}(I)$ for all $i$ and $j$.
\end{lemma}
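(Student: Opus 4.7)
The plan is to produce $J$ as the reverse-lexicographic initial ideal of $I$ after a judicious linear change of coordinates on $S$. For $i \leq r$, let $V_i \subseteq \{1, \ldots, n\}$ denote the set of indices of the variables appearing in $f_i$; since $F$ is a regular sequence of monomials, the sets $V_i$ are pairwise disjoint and each is nonempty.

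First I would arrange the variables so that $\min V_i \geq i$ for each $i \leq r$. A permutation $\sigma$ of $\{1, \ldots, n\}$ (viewed as a graded automorphism of $S$) with $\min \sigma(V_i) \geq i$ for every $i$ exists by Hall's marriage theorem: this amounts to choosing distinct representatives $\sigma^{-1}(j)$ in the sets $A_j := V_0 \cup V_1 \cup \cdots \cup V_j$ (where $V_0$ is the complement of $V_1 \cup \cdots \cup V_r$) for $j = 1, \ldots, r-1$, and Hall's condition follows immediately from the bound $|V_i| \geq 1$. After replacing $I$ by $\sigma(I)$, which preserves Hilbert functions and graded Betti numbers, I may assume $\min V_i \geq i$ henceforth.

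Next, fix any nonzero $\epsilon \in \bbbk$ and define the $\bbbk$-algebra automorphism $\phi \colon S \to S$ by
\[
\phi(x_c) = \begin{cases} x_c + \epsilon\, x_i & \text{if } c \in V_i \text{ and } c > i, \\ x_c & \text{otherwise.} \end{cases}
\]
Every nontrivial substitution introduces only a variable of strictly smaller index, so $\phi$ is upper unitriangular and hence invertible. For each $i \leq r$, expanding
\[
\phi(f_i) = x_i^{b_{i,i}} \prod_{\substack{c \in V_i \\ c > i}} (x_c + \epsilon\, x_i)^{b_{i,c}}
\]
(where $b_{i,c}$ is the exponent of $x_c$ in $f_i$, with the convention $b_{i,i} = 0$ if $i \notin V_i$) yields a polynomial whose support is contained in variables of index $\geq i$. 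The unique term in which every $x_c$ with $c > i$ has exponent zero is $\epsilon^{e_i - b_{i,i}}\, x_i^{e_i}$, which is nonzero; this is the maximum under reverse lex, so $\init_{\mathrm{rev}}(\phi(f_i)) = x_i^{e_i}$.

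Setting $J := \init_{\mathrm{rev}}(\phi(I))$ then delivers the required ideal: it is monomial, contains every $x_i^{e_i}$ (hence $P$) because $\phi(f_i) \in \phi(I)$, has the same Hilbert function as $\phi(I) = I$ since $\phi$ and the initial-ideal operation each preserve Hilbert functions, and has graded Betti numbers satisfying $b_{i,j}(J) \geq b_{i,j}(\phi(I)) = b_{i,j}(I)$. The main obstacle is designing a single unitriangular automorphism $\phi$ whose action on every $f_i$ simultaneously yields $x_i^{e_i}$ as the leading term; the Hall's-theorem permutation is what makes this possible, by guaranteeing that in each substitution $x_c \mapsto x_c + \epsilon\, x_i$ the target variable $x_i$ has index strictly smaller than $x_c$.
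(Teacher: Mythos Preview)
Your argument is correct and follows the same strategy as the paper's: apply a unitriangular linear substitution that turns each $f_i$ into a polynomial whose reverse-lex leading term is $x_i^{e_i}$, then pass to the reverse-lex initial ideal. The paper carries this out one $f_k$ at a time (applying a substitution and taking an initial ideal at each step) and, for the reordering, simply picks one variable from each of the pairwise disjoint supports to play the role of $x_i$---which already forces every other variable in $\supp(f_i)$ to have index greater than $r\geq i$---so your appeal to Hall's theorem, while valid, is more machinery than is needed.
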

\begin{proof}
For any monomial $u$ of $S$,
we set $\supp(u) = \{ x_k:x_k \mbox{ divides $u$}\}$.
Since $f_{1},\cdots, f_{r}$ is a regular sequence, we have
$\supp(f_{i})\cap \supp(f_{j})=\varnothing$ for all $i\neq j$.  After
reordering the variables if necessary, we may assume $x_{i}\in
\supp(f_{i})$.  

Write $\supp(f_{1})=\{ x_{i_{1}},\dots,x_{i_{t}}\}$.
We may assume $i_{1} =1$.  Consider the automorphism $\phi$ of $S$
given by $\phi(x_{k})=x_{k}$ for $x_{k}=x_{1}$ or $x_{k}\not\in\supp(f_{1})$
and
$\phi(x_{k})=x_{1}+x_{k}$ for
$x_{k}\in\supp(f_{1})\smallsetminus\{x_{1}\}$. 
We have $\phi(f_{k})=f_{k}$ for $k\neq 1$, and we can write
$\phi(f_{1})=x_{1}^{e_{1}}+g$ for some polynomial $g$. 
 Set $I'=\init_{\mathrm{rev}}(\phi(I))$.
Then $I'$ contains $(x_{1}^{e_{1}}, f_{2}, \cdots,
f_{r})$, has the same Hilbert function as $I$, 
 and
$b_{i,j}(I')\geq b_{i,j}(I)$. 
Repeating
this procedure for each $f_{k}$ yields an ideal $J$ with the desired
properties.  
\end{proof}

We remark that the proof of Lemma \ref{anymonregseq} is
characteristic-free.  However, for the rest of the section, we will
assume that $\bbbk$ has characteristic zero and that $I$ is a monomial-plus-$P$
ideal.  Since the resolution of a monomial ideal depends only on the
characteristic of the ground field, we may, without loss of
generality, replace $\bbbk$ with any field of characteristic zero.  Thus,
we will assume that $\bbbk=\mathbb{C}$.  

The idea of our proof is similar to that of Pardue \cite{Pa}.
For a monomial-plus-$P$ ideal $I$ which is not Borel-plus-$P$, we
construct
another ideal 
$J$ satisfying:
\begin{itemize}
\item $J$ contains $P$.
\item $\Hilb(J)=\Hilb(I)$.
\item $b_{i,j}(J)\geq b_{i,j}(I)$ for all $i,j$.
\item $J$ is reverse lexicographically greater than $I$.
\end{itemize}
After applying this construction repeatedly, we will obtain a Borel-plus-$P$
ideal and apply Theorem \ref{boreltolex}.

\begin{definition}  Any homogeneous polynomial $f\in S_{d}$ 
  may be written 
  $f=\sum\alpha_{v}v$, where $v$ ranges over the 
  degree $d$ monomials and $\alpha_{v}\in \mathbb{C}$.  
  The \emph{monomial support} of $f$ is the set of monomials
  with nonzero coefficients, $\msupp(f)=\{v:\alpha_{v}\neq 0\}$.  
\end{definition}

\begin{lemma}
\label{tsuika}
Let $d \geq 0$ be an integer and
let $u_1 >_{\mathrm{rev}} \cdots >_{\mathrm{rev}} u_t$ be monomials of
    degree $d$. 
Suppose that $f_1,\dots,f_t$ are $\mathbb{C}$-linearly independent polynomials
of degree $d$
such that $u_k \in \mSupp (f_k)$ for all $k$
and $u_k \not \in \mSupp(f_\ell)$
whenever $ \ell \lneqq k$.
Then
$
\big\{ \init_{\mathrm{rev}}(f) : f \in
\mathrm{span}_\CC\{f_1,\dots,f_t\}\big\}$ 
is reverse lexicographically greater than or equal to
 $\{ u_1,\dots,u_t\}$.
\end{lemma}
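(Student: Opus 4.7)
The plan is to induct on $t$.  The base case $t=1$ is immediate, since $V=\mathrm{span}_\CC\{f_1\}$ gives $\{\init_{\mathrm{rev}}(f) : f\in V\setminus\{0\}\}=\{\init_{\mathrm{rev}}(f_1)\}$, and $u_1\in\mSupp(f_1)$ forces $\init_{\mathrm{rev}}(f_1)\geq_{\mathrm{rev}} u_1$.

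For the inductive step, set $V'=\mathrm{span}_\CC\{f_1,\dots,f_{t-1}\}$.  The hypotheses restrict cleanly to $f_1,\dots,f_{t-1}$ and $u_1,\dots,u_{t-1}$, so by induction the set $W':=\{\init_{\mathrm{rev}}(g):g\in V'\setminus\{0\}\}$ has exactly $t-1$ elements $w_1,\dots,w_{t-1}$ with $w_k\geq_{\mathrm{rev}} u_k$ for every $k<t$.  Next I would produce a polynomial $\tilde{f}_t\in f_t+V'$ with $\init_{\mathrm{rev}}(\tilde{f}_t)\notin W'$, by iteratively replacing the current representative with its difference from a scalar multiple of any $g\in V'$ whose initial monomial coincides with the current one.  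This process terminates, because initial monomials strictly decrease among the finitely many monomials of degree $d$.

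The main obstacle is to check that $u_t\in\mSupp(\tilde{f}_t)$ persists through the reduction.  This is precisely where the asymmetric hypothesis $u_t\notin\mSupp(f_\ell)$ for $\ell<t$ is essential: every element of $V'$ is a $\CC$-linear combination of $f_1,\dots,f_{t-1}$ and thus has coefficient zero on $u_t$, so subtracting elements of $V'$ from $f_t$ never alters the coefficient of $u_t$.  In particular $\tilde{f}_t\neq 0$ and $\init_{\mathrm{rev}}(\tilde{f}_t)\geq_{\mathrm{rev}} u_t$.

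Since $\{f_1,\dots,f_{t-1},\tilde{f}_t\}$ is a basis of $V$ and $\init_{\mathrm{rev}}(\tilde{f}_t)\notin W'$, a direct check gives
\[
\{\init_{\mathrm{rev}}(f):f\in V\setminus\{0\}\} \;=\; W'\;\sqcup\;\{\init_{\mathrm{rev}}(\tilde{f}_t)\};
\]
indeed, any $f\in V$ splits as $g+c\tilde{f}_t$ with $g\in V'$ and $c\in\CC$, and when $c\neq 0$ the initial monomials of $g$ and $c\tilde{f}_t$ are distinct, so no cancellation can occur.  Extending the matching provided by the inductive hypothesis by pairing $u_t$ with $\init_{\mathrm{rev}}(\tilde{f}_t)$, Lemma \ref{2no1} then delivers the desired inequality.
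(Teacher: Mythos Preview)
Your argument is correct and follows essentially the same inductive route as the paper. The only difference is cosmetic: where you explicitly reduce $f_t$ modulo $V'$ to produce a representative $\tilde f_t$ with $\init_{\mathrm{rev}}(\tilde f_t)\notin W'$, the paper simply picks any $v$ in the full set of initial monomials not lying in $F=W'$, writes $v=\init_{\mathrm{rev}}(\alpha_1 f_1+\cdots+\alpha_t f_t)$, and observes that $v\notin F$ forces $\alpha_t\neq 0$, so $u_t$ survives in the monomial support and $v\ge_{\mathrm{rev}}u_t$. Both arguments use exactly the same key observation---that no element of $V'$ carries $u_t$ in its support---and both finish by appending the new pair to the matching from the inductive step and invoking Lemma~\ref{2no1}. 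One small point of care: when you say ``by induction $w_k\ge_{\mathrm{rev}}u_k$ for every $k<t$,'' you are implicitly strengthening the induction hypothesis from the set inequality to the existence of an entrywise matching; this is harmless (and the paper does the same thing tacitly), but it is worth being explicit that the induction is really on the matching statement, with Lemma~\ref{2no1} applied only once at the very end.
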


\begin{proof}
  We induct on $t$.
If $t=1$ then the statement is obvious.
Otherwise, let $F =\{ \init_{\mathrm{rev}}(f) : f \in
\mathrm{span}_\CC\{f_1,\dots,f_{t-1}\}\}$. 
By induction, we have $F \geq_{\mathrm{rev}} \{u_1,\dots,u_{t-1}\}$.
Let
\[ v \in \left\{ \init_{\mathrm{rev}}(f) : f \in
  \mathrm{span}_\CC\{f_1,\dots,f_t\}\right\} 
\smallsetminus F.
\]
By Lemma \ref{2no1},
it is enough to show that $v \geq_{\mathrm{rev}} u_t$.
By the definition of $v$,
there exist $\alpha_1,\dots,\alpha_t \in \CC$ such that
$
v = \init_{\mathrm{rev}} ( \alpha_1 f_1 + \cdots + \alpha_t f_t).
$
Since $v \not \in F$ we have $\alpha_t \ne 0$, so
$u_t \in \mSupp(\alpha_1 f_1 + \cdots + \alpha_t f_t)$.
Thus, 
$v=\init_{\mathrm{rev}} ( \alpha_1 f_1 + \cdots + \alpha_t f_t) \geq_{\mathrm{rev}}
u_t$ as desired. 
\end{proof}

For the remainder of the section, fix two variables $a>_{\mathrm{rev}}b$.  


\begin{proposition}\label{2no3easy}
 Suppose that $P$ contains no power of $b$, and
  that $I$ is not $\{a,b\}$-compressed-plus-$P$.  Consider the automorphism
  $\phi$ of $S$ given by $\phi(x_{k})=x_{k}$ for $x_{k}\neq b$ and
  $\phi(b)=a-b$.  Set $J=\init_{\mathrm{rev}}(\phi(I))$.  Then:
\begin{itemize}
\item[(i)] $J$ contains $P$.
\item[(ii)] $J$ has the same Hilbert function as $I$.
\item[(iii)] $b_{i,j}(J)\geq b_{i,j}(I)$ for all $i$ and $j$.
\item[(iv)] $J\neq I$.
\item[(v)] $J$ is reverse lexicographically greater than $I$.
\end{itemize}
\end{proposition}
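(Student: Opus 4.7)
Parts (i)--(iii) are essentially bookkeeping. Since $P$ contains no power of $b$, $b$ is not among $x_1,\ldots,x_r$, so $\phi$ fixes each generator $x_i^{e_i}$ of $P$; thus $P\subset\phi(I)$, and $P=\init_{\mathrm{rev}}(P)\subset J$, giving (i). Parts (ii) and (iii) follow because any automorphism of $S$ preserves Hilbert function and Betti numbers, while passing to a reverse-lex initial ideal preserves Hilbert function and weakly increases Betti numbers.

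For (v), I would apply Lemma~\ref{tsuika} in each degree. Fix $d$, list the monomials of $I_d$ as $u_1>_{\mathrm{rev}}\cdots>_{\mathrm{rev}}u_t$, and take $f_k=\phi(u_k)\in\phi(I)_d$; these are $\CC$-linearly independent since $\phi$ is a linear automorphism and the $u_k$ are distinct monomials. Writing $u_k=x^\alpha a^p b^q$, the expansion
\[
\phi(u_k)=x^\alpha a^p(a-b)^q=\sum_{j=0}^{q}(-1)^{q-j}\binom{q}{j}x^\alpha a^{p+j}b^{q-j}
\]
shows that every element of $\msupp(\phi(u_k))$ differs from $u_k$ only by replacing some $b$'s with $a$'s, and so is $\geq_{\mathrm{rev}} u_k$. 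In particular $u_k\in\msupp(\phi(u_k))$, and for $\ell<k$ every $v\in\msupp(\phi(u_\ell))$ satisfies $v\geq_{\mathrm{rev}}u_\ell>_{\mathrm{rev}}u_k$, so $u_k\notin\msupp(\phi(u_\ell))$. Lemma~\ref{tsuika} then yields $\{\init_{\mathrm{rev}}(f):f\in\phi(I)_d\}\geq_{\mathrm{rev}}I_d$; this left-hand set is the monomial basis of $J_d$, proving (v).

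For (iv), I would first observe that because no $x_i^{e_i}$ involves $b$, the ideal $P$ is itself closed under the swap $b\to a$, so that $I$ is $\{a,b\}$-compressed-plus-$P$ if and only if $I$ itself has the swap property that $u\in I$ with $b\mid u$ implies $u(a/b)\in I$. The hypothesis therefore yields a monomial $u=x^\alpha a^p b^q\in I$ with $q\geq 1$ and $u(a/b)\notin I$; set $m_j=x^\alpha a^{p+j}b^{q-j}$. Assume $J=I$ for contradiction. The base case $m_q=\init_{\mathrm{rev}}(\phi(u))\in J=I$ opens a downward induction on $s$: assuming $m_q,\ldots,m_{q-s+1}\in I$, the expansions $\phi(m_{q-l})=\sum_{k=0}^{l}(-1)^k\binom{l}{k}m_{q-k}$ together with the anchor $m_q=\phi(m_q)\in\phi(I)$ (since $m_q$ is free of $b$) inductively place each $m_{q-l}\in\phi(I)$ for $l\leq s-1$, so that
\[
g=\phi(u)-\sum_{l=0}^{s-1}(-1)^l\binom{q}{l}m_{q-l}=\sum_{l=s}^{q}(-1)^l\binom{q}{l}m_{q-l}
\]
lies in $\phi(I)$ with $\init_{\mathrm{rev}}(g)=m_{q-s}$, forcing $m_{q-s}\in J=I$. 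Taking $s=q-1$ gives $m_1=u(a/b)\in I$, contradicting our choice of $u$.

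The main obstacle is this final induction: one must extract from a single failed swap enough information about $\phi(I)$ to produce elements with all the prescribed reverse-lex initial monomials. The triangular relation between $\{m_{q-k}\}$ and $\{\phi(m_{q-k})\}$, anchored by $m_q=\phi(m_q)$, is what makes the bookkeeping propagate.
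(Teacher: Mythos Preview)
Your proof is correct and, for parts (i)--(iii) and (v), essentially identical to the paper's: you verify the hypotheses of Lemma~\ref{tsuika} degree by degree using the observation that every monomial in $\msupp(\phi(u_\ell))$ is obtained from $u_\ell$ by trading $b$'s for $a$'s, hence is $\geq_{\mathrm{rev}} u_\ell$. The paper records exactly this argument for (v).

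The one place you diverge is (iv). The paper simply declares (iv) ``immediate'' and moves on; you instead supply a genuine argument, and it is worth noting that some argument really is needed. Knowing only that $\init_{\mathrm{rev}}(\phi(u))$ replaces \emph{all} $b$'s in $u$ by $a$'s does not by itself yield the single-swap property, so the contradiction is not one line. Your approach---using the triangular relation between $\{m_{q-l}\}$ and $\{\phi(m_{q-l})\}$, anchored at the $b$-free monomial $m_q$, to peel off terms of $\phi(u)$ one at a time inside $\phi(I)$---is the natural way to extract $m_1\in J$ from $J=I$, and it is correct (the nonvanishing of $\binom{q}{s}$ uses that we are in characteristic zero, which is the standing hypothesis of the section). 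In effect you have filled in a step the paper leaves to the reader; the overall strategy is the same.
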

\begin{proof}
(i), (ii), (iii), and (iv) are immediate; we prove (v).  
For any degree $d$, let $\{I_{d}\}$ be the set of degree $d$ monomials
in $I$.  Write $\{I_{d}\}=\{u_{1},\cdots, u_{t}\}$, ordered
reverse lexicographically.  Then
$\{\phi(u_{1}),\cdots, \phi(u_{t})\}$ is a $\mathbb{C}$-basis for
$\phi(I)_{d}$.  Clearly, $u_{k}\in\mSupp(\phi(u_{k}))$ for all $k$ and
$u_{k}\not\in\mSupp(\phi(u_{\ell}))$ for all $\ell\lneqq k$.  Applying
Lemma \ref{tsuika}, $\{J_{d}\}$ is reverse lexicographically greater
than or equal to 
$\{I_{d}\}$.  Since $d$ was arbitrary, it follows that $J$ is
reverse lexicographically greater than $I$, proving (v).
\end{proof}


Next, we consider the case that $P$ contains some power of $b$.

\begin{definition}
Let $e_{b}$ be the smallest power of $b$ appearing in $P$ (i.e.,
$b^{e_{b}}$ is a generator of $P$), and let $\zeta$ be a primitive
$e_{b}^{\ \mathrm{th}}$ root of unity (e.g.,
$\zeta=\cos\frac{2\pi}{e_{b}}+\sqrt{-1}\sin\frac{2\pi}{e_{b}}$). 
Let $\tildephi$ be the autormorphism of $\spo$ given by
$\tildephi(x_{k})=x_{k}$ for $x_{k}\neq b$, $\tildephi(b)=a-b$, and
$\tildephi(c_{k})=a-\zeta^{k}b+c_{k}$ for all $c_{k}$.  Put
$\tildej=\init_{\mathrm{rev}}(\tildephi(\ipo))$.  
\end{definition}

We recall an arithmetic fact about roots of unity:

\begin{lemma}\label{arithmetic}\ 
Let $f=(a-b)(a-\zeta b)\cdots(a-\zeta^{k}b)$.  Then:
\begin{itemize}
\item[(i)] If $k=e_{b}-1$, then $f=a^{k}-b^{k}$.
\item[(ii)] If $k\lneqq e_{b}-1$, then $ab^{k-1}\in\mSupp(f)$.
\end{itemize}
\end{lemma}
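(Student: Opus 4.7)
The plan is to treat both parts of the lemma as direct root-of-unity computations; there is essentially no strategy required beyond the cyclotomic factorization of $x^{e_b}-1$ and a small symmetric-function calculation.

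For (i), the plan is to reduce to the standard factorization $x^{e_b}-1 = \prod_{j=0}^{e_b-1}(x-\zeta^j)$. Taking $k=e_b-1$ and setting $x=a/b$, one writes
\[
f \;=\; \prod_{j=0}^{e_b-1}(a-\zeta^j b) \;=\; b^{e_b}\prod_{j=0}^{e_b-1}\!\left(\tfrac{a}{b}-\zeta^j\right) \;=\; b^{e_b}\!\left(\tfrac{a^{e_b}}{b^{e_b}}-1\right) \;=\; a^{e_b}-b^{e_b},
\]
which is (i) (the conclusion $a^k-b^k$ in the statement is a typo for $a^{k+1}-b^{k+1}=a^{e_b}-b^{e_b}$; there are $k+1$ factors in the product).

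For (ii), I would expand $f$ using elementary symmetric polynomials in the roots $\zeta^0,\zeta^1,\ldots,\zeta^k$. Writing
\[
f \;=\; \sum_{i=0}^{k+1}(-1)^i\, e_i\!\left(1,\zeta,\ldots,\zeta^k\right) a^{k+1-i}\,b^i,
\]
it suffices to show that the coefficient of the monomial $ab^{k}$ (i.e., the $i=k$ term) is nonzero, giving a monomial of the desired form in the support. Using the identity $e_k(x_0,\ldots,x_k) = \bigl(\prod_j x_j\bigr)\sum_i x_i^{-1}$, one obtains
\[
e_k(1,\zeta,\ldots,\zeta^k) \;=\; \zeta^{k(k+1)/2}\,\sum_{j=0}^{k}\zeta^{-j} \;=\; \zeta^{k(k+1)/2}\cdot \frac{1-\zeta^{-(k+1)}}{1-\zeta^{-1}}.
\]
Since $\zeta$ has exact order $e_b$ and $k+1\le e_b-1<e_b$, the factor $1-\zeta^{-(k+1)}$ is nonzero, so the coefficient of $ab^k$ is nonzero, proving (ii).

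The only ``obstacle'' is bookkeeping: lining up the number of factors ($k+1$) against the exponent in part (i), and identifying the correct elementary symmetric polynomial whose nonvanishing corresponds to the claimed monomial in part (ii). There is no deeper mathematical content—both parts are consequences of the single fact that proper partial sums $1+\zeta+\cdots+\zeta^{m}$ of powers of a primitive $e_b$-th root of unity vanish only when $m=e_b-1$.
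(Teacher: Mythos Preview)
Your argument is correct. The paper states this lemma without proof (it is introduced as a recalled ``arithmetic fact about roots of unity''), so there is nothing to compare against; your symmetric-function computation is a clean way to verify it. You are also right that both parts contain off-by-one typos: the product has $k+1$ factors and hence degree $k+1$, so (i) should read $f=a^{k+1}-b^{k+1}=a^{e_b}-b^{e_b}$, and (ii) should assert $ab^{k}\in\mSupp(f)$ rather than $ab^{k-1}$ (this is also what the proof of Lemma~\ref{isnice} actually uses). You flag the typo in (i) explicitly but correct (ii) only tacitly; it would be cleaner to say so outright, since otherwise a reader may wonder why you compute the coefficient of $ab^{k}$ when the stated claim is about $ab^{k-1}$.
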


\begin{lemma}\label{isnice}  Suppose that $P$ contains some power of
  $b$, and that $I$ is not $\{a,b\}$-compressed-plus-$P$.  Then:
\begin{itemize}
\item[(i)] $\tildej$ contains $P\spo$.
\item[(ii)] $\tildej$ has the same Hilbert function as $I\spo$.
\item[(iii)] $b_{i,j}(\tildej)\geq b_{i,j}(I\spo)$ for all $i$ and $j$.
\item[(iv)] $\tildej\neq I\spo$.
\item[(v)] $\tildej\cap S$ is reverse lexicographically greater than
  $I$.
\item[(vi)] $\tildej=(\tildej\cap S)\spo$.  
\end{itemize}
\end{lemma}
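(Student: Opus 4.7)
The proof parallels Proposition \ref{2no3easy}, lifted to the polarized ring $\spo$ via the carefully constructed automorphism $\tildephi$. Parts (ii) and (iii) are immediate: $\tildephi$ is an automorphism of $\spo$, so $\tildephi(\ipo)$ has the same Hilbert function and graded Betti numbers as $\ipo$; passing to the reverse-lex initial ideal $\tildej$ preserves the Hilbert function and can only increase the Betti numbers; and Proposition \ref{polarlemma}(i), (ii) translate these invariants to $I\spo$.

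For (i), the only nontrivial generator of $P\spo$ to verify is $b^{e_b}$. Since $b^{e_b} \in I$, Proposition \ref{polarlemma}(iii) gives $\pol_b(b^{e_b}) = bc_1 \cdots c_{e_b-1} \in \ipo$, so
\[
f := \tildephi(bc_1\cdots c_{e_b-1}) = (a-b)\prod_{k=1}^{e_b-1}(a - \zeta^k b + c_k) \in \tildephi(\ipo).
\]
Substituting $c_k = 0$ and applying Lemma \ref{arithmetic}(i) yields $\prod_{k=0}^{e_b-1}(a - \zeta^k b) = a^{e_b} - b^{e_b}$. Because $a >_{\mathrm{rev}} b$ and $b^{e_b} \in P$ forces the index of $b$ to be at most $r$, $a$ also has a power $a^{e_a} \in P$ with $e_a \leq e_b$; hence $a^{e_b} = a^{e_b - e_a} \cdot a^{e_a} \in \tildephi(\ipo)$. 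Subtracting, $f - a^{e_b} \in \tildephi(\ipo)$ has $c$-free part $-b^{e_b}$; since $c$-free monomials dominate $c$-containing ones of equal degree in reverse-lex (with the $c_k$'s placed last), $\init_{\mathrm{rev}}(f - a^{e_b}) = b^{e_b}$, giving $b^{e_b} \in \tildej$. The generators $x_i^{e_i}$ with $x_i \neq b$ are fixed by both $\pol_b$ and $\tildephi$, so they lie in $\tildej$ trivially.

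The heart of the proof is (vi), from which (v) and (iv) will follow. Introduce the ring map $\psi : \spo \to S$ by $\psi(h) = \tildephi(h)|_{c = 0}$ --- equivalently $\psi(x_i) = x_i$ for $x_i \neq b$, $\psi(b) = a - b$, $\psi(c_k) = a - \zeta^k b$ --- and let $N'$ be the ideal of $S$ generated by $\{\init_{\mathrm{rev}}(\psi(g)) : g \in \ipo,\ \psi(g) \neq 0\}$. Since $c$-free monomials dominate $c$-containing ones in reverse-lex, $\psi(g) \neq 0$ gives $\init_{\mathrm{rev}}(\tildephi(g)) = \init_{\mathrm{rev}}(\psi(g)) \in N'$, so $N'\spo \subseteq \tildej$. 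To upgrade this to equality (yielding (vi) with $\tildej \cap S = N'$), I would prove the Hilbert inequality $\Hilb(N')(j) \geq \Hilb(I)(j)$ for all $j$; combined with $N'\spo \subseteq \tildej$ and the polarization identity $\Hilb(\tildej) = \Hilb(I\spo)$ from Proposition \ref{polarlemma}(ii), this forces $\Hilb(N') = \Hilb(I)$ and $N'\spo = \tildej$. For the inequality, compute on each monomial $u = v b^{f_b} \in I$:
\[
\psi(\pol_b(u)) = v \cdot \prod_{k=0}^{f_b - 1}(a - \zeta^k b),
\]
in which $u$ itself appears with nonzero coefficient $\prod_{k=0}^{f_b-1}(-\zeta^k)$. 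A support analysis parallel to Proposition \ref{2no3easy}(v) then lets Lemma \ref{tsuika} produce, from the $\mathbb{C}$-span of $\{\psi(\pol_b(u)) : u \in I_j\}$, exactly $\Hilb(I)(j)$ distinct reverse-lex initials in $N'_j$, establishing the inequality.

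Parts (v) and (iv) follow. For (v): (vi) and Proposition \ref{polarlemma}(ii) give $\Hilb(N) = \Hilb(I)$ where $N := \tildej \cap S$. At each degree $d$, order $\{I_d\} = \{u_1, \ldots, u_t\}$ reverse-lex; the polynomials $\psi(\pol_b(u_k))$ satisfy the hypotheses of Lemma \ref{tsuika}, namely $u_k \in \mSupp(\psi(\pol_b(u_k)))$ via the nonzero coefficient above, and $u_k \notin \mSupp(\psi(\pol_b(u_\ell)))$ for $\ell < k$ by a direct $b$-exponent comparison (the support of $\psi(\pol_b(u_\ell))$ consists of monomials $v_\ell a^{\alpha} b^{\beta}$ with $\beta \leq \beta_\ell < \beta_k$). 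Lemma \ref{tsuika} produces $t$ reverse-lex initials in $N_d$ whose reverse-lex $t$-tuple dominates $\{u_1, \ldots, u_t\}$, and the Hilbert match forces these to exhaust $N_d$, giving (v). For (iv), the non-$\{a, b\}$-compressed-plus-$P$ hypothesis ensures strict inequality in (v) in some degree: in any degree where the $\{a, b\}$-component of $I$ is not a lex segment, Lemma \ref{tsuika} shifts some $u_k$ to a reverse-lex strictly larger monomial, placing an element of $N$ outside $I$; hence $\tildej \cap S = N \neq I = I\spo \cap S$, so $\tildej \neq I\spo$. The main obstacle is the Hilbert inequality $\Hilb(N') \geq \Hilb(I)$ in (vi): this is where the roots-of-unity choice in $\tildephi$ becomes essential, with Lemma \ref{arithmetic} supplying precisely the arithmetic identities that keep $u \in \mSupp(\psi(\pol_b(u)))$ for every $u$ and prevent the cancellations that would shrink $N'$.
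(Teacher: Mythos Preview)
Your overall strategy is the same as the paper's: pass to $\spo$, apply $\tildephi$, take the reverse-lex initial ideal, and use Lemma~\ref{tsuika} on the images of $\{\pol_b(u):u\in I\}$ to control the Hilbert function and the reverse-lex comparison. Your global formulation via the substitution map $\psi=\tildephi(\,\cdot\,)|_{c=0}$ is a clean packaging of what the paper does componentwise after decomposing $I=\bigoplus_f fI_f$ over monomials $f\in\bbbk[x_i:x_i\neq a,b]$; both routes give (ii), (iii), (v), and (vi) correctly.

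However, there is a genuine gap in your argument for (iv), and it stems from a misidentification of where Lemma~\ref{arithmetic} is actually needed. You assert that Lemma~\ref{arithmetic} is the ``main obstacle'' for the Hilbert inequality in (vi), ensuring $u\in\mSupp(\psi(\pol_b(u)))$. But that membership follows trivially from the nonzero coefficient $\prod_{k=0}^{f_b-1}(-\zeta^k)$, which you yourself computed, and holds for \emph{any} nonzero scalars in place of the $\zeta^k$; the roots of unity play no role there. Where the roots-of-unity choice is essential (beyond part (i)) is precisely in (iv). Lemma~\ref{tsuika} only yields $\geq_{\mathrm{rev}}$, not strict inequality. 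To upgrade to strict, the paper takes a witness $v=a^{p_k}b^{q_k}\in I_f$ with $q_k<e_b$ and $u=a^{p_k+1}b^{q_k-1}\notin I_f$, and then applies Lemma~\ref{tsuika} with $u$ in place of $u_k$; this requires $u\in\mSupp(\tildephi(\pol_b(v)))$, i.e.\ that the coefficient of $ab^{q_k-1}$ in $\prod_{m=0}^{q_k-1}(a-\zeta^m b)$ is nonzero. That is exactly Lemma~\ref{arithmetic}(ii), and it can fail for generic scalars. Your sentence ``Lemma~\ref{tsuika} shifts some $u_k$ to a reverse-lex strictly larger monomial'' skips this step entirely.

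A minor additional point: in your support verification for (v), the inequality $\beta_\ell<\beta_k$ only follows from $u_\ell>_{\mathrm{rev}}u_k$ when $u_\ell$ and $u_k$ share the same $\{a,b\}$-free part; when they do not, $u_k\notin\mSupp(\psi(\pol_b(u_\ell)))$ holds for the trivial reason that the $\{a,b\}$-free parts of everything in that support equal $w_\ell$. This case split is implicit in the paper's componentwise decomposition and should be made explicit in your global version.
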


\begin{proof}

To prove (i), it suffices to show that $b^{e_{b}}\in \tildej$.  We have
\begin{align*}
\tildephi\left(\pol_{b}(b^{e_{b}})\right)
&=(a-b)\prod_{k=1}^{e_{b}-1}\Big((a-\zeta^{k}b)+c_{k}\Big)\\  
&=\prod_{k=1}^{e_{b}}\Big(a-\zeta^{k}b\Big) + g\\
&=a^{e_{b}}-b^{e_{b}}+g,
\end{align*}
where every term of the polynomial $g$ is divisible by some $c_{k}$.
In particular, since $a^{e_{b}}\in \ipo$, we have
$\tildephi(a^{e_{b}}-\pol_{b}(b^{e_{b}}))=b^{e_{b}}+g\in\tildephi(\ipo)$,
so $b^{e_{b}}\in \tildej$.  

(ii) and (iii) are immediate from Proposition
  \ref{polarlemma}.
We will prove (iv), (v), and (vi) simultaneously.

 Let $\mathcal{A}=\{a,b, c_{1},\cdots,
  c_{s-1}\}$ be the set consisting of $a$, $b$, and all of the
  $c$-variables, put  $R=\mathbb{C}[\mathcal{A}]$, and consider the
  decomposition $I=\bigoplus 
  fI_{f}$, where $f$ ranges over the monomials of $S$ which are not
  divisible by $a$ or $b$.  Since $\tildephi$ restricts to an
  automorphism of $R$, we get $\tildej=\bigoplus
  f\tildej_{f}=\bigoplus 
  f\left(\init_{\mathrm{rev}}(\tildephi((I_{f})^{\mathrm{po}}))\right)$. It
  suffices to show that
  $\{(\tildej_{f}\cap
  \mathbb{C}[a,b])_{d}\}\geq_{\mathrm{rev}}\{(I_{f})_{d}\}$ for all 
  $d$ (where $\{(I_{f})_{d}\}$ is the set of degree $d$ monomials in
  $I_{f}$, etc.), that $\tildej_{f}\neq I_{f}R$ whenever $I_{f}$
  is not lex-plus-$(b^{e_{b}})$ in $\mathbb{C}[a,b]$, and that
  $\tildej_{f}=(\tildej_{f}\cap \mathbb{C}[a,b])R$.  

  Write $\{(I_{f})_{d}\}=\{a^{p_{1}}b^{q_{1}}, \cdots,
  a^{p_{t}}b^{q_{t}}\}$ in reverse lex order (so
  $q_{1}<\cdots<q_{t}$.)  We have
  \[
  \tildephi(\pol_{b}(a^{p_{k}}b^{q_{k}})) =
  a^{p_{k}}\left(\left(\prod_{\ell=0}^{q_{k}-1}
  (a-\zeta^{\ell}b)\right)+g_{k}\right),
  \]
  where every term of $g_{k}$ is divisible by some $c_{t}$.  
  We have $a^{p_{k}}b^{q_{k}}\in
  \mSupp(\tildephi(\pol_{b}(a^{p_{k}}b^{q_{k}})))$ for all $k$, and
  $a^{p_{k}}b^{q_{k}}\not \in
  \mSupp(\tildephi(\pol_{b}(a^{p_{\ell}}b^{q_{\ell}})))$ for all
  $\ell\lneqq k$.  Set
  \[
  F_{d}=
  \{\init_{\mathrm{rev}}(\tildephi(\pol_{b}(a^{p_{k}}b^{q_{k}}))): 
  a^{p_{k}}b^{q_{k}}\in \{(I_{f})_{d}\}\}.
\]
  Then, by Lemma
  \ref{tsuika}, it follows that
$F_{d}\geq_{\mathrm{rev}}\{(I_{f})_{d}\}$.  In particular,
$F_{d}\subset \mathbb{C}[a,b]$.

  Let $J_{f}$ be the ideal of $\mathbb{C}[a,b]$ generated by all the 
  $F_{d}$, $d\geq 0$.  Immediately we have $\Hilb(J_{f})(d)\geq
  \Hilb(I_{f})(d)$ for all $d$, so 
  it follows that $\Hilb(J_{f}R)(d)\geq
  \Hilb(I_{f}R)(d)=\Hilb(\tildej_{f})(d)$ for all $d$.
  But $J_{f}R\subset \tildej_{f}$, so it must be the case that
  $J_{f}R=\tildej_{f}$ and $J_{f}=\bigoplus \Span_{\mathbb{C}}(F_{d})$.    
  This proves $\tildej_{f}\cap \mathbb{C}[a,b]=J_{f}$, so (vi) holds.
  Also, since $F_{d}\geq_{\mathrm{rev}}\{I_{f}\}_{d}$, it follows that
  $\tildej_{f}\cap \mathbb{C}[a,b]=\bigoplus\Span_{\mathbb{C}}(F_{d})$ is
  reverse lexicographically greater than or equal to $I_{f}$.


  For (iv) and (v), it remains to show that, if $I_{f}$ is not
  lex-plus-$(b^{e_{b}})$, then $\tildej_{f}\neq I_{f}R$.  In this
  case, there exists a degree $d$ and an index $k$ such that
  $q_{k}\lneqq e_{b}$, and $v=a^{p_{k}}b^{q_{k}}\in (I_{f})_{d}$ but
  $u=a^{p_{k}+1}b^{q_{k}-1}\not\in(I_{f})_{d}$.  
  It follows from Lemma \ref{arithmetic} that
  $u\in\mSupp(\tildephi(\pol_{b}(v)))$, but
  $u\not\in\mSupp(\tildephi(\pol_{b}(a^{p_{\ell}}b^{q_{\ell}})))$ for
  any $\ell\lneqq k$.  Thus, by Lemma \ref{tsuika}, $F_{d}$
  is (strictly) reverse lexicographically greater than
  $\{(I_{f})_{d}\}$, and in particular $J_{f}\neq I_{f}$ and
  $\tildej_{f}\neq I_{f}R$.    
\end{proof}


\begin{corollary}\label{2no3hard} Suppose that $P$ contains some power of
  $b$, and that $I$ is not $\{a,b\}$-compressed-plus-$P$.  Set
  $J=\tildej\cap S$.  Then:
\begin{itemize}
\item[(i)] $J$ contains $P$.
\item[(ii)] $J$ has the same Hilbert function as $I$.
\item[(iii)] $b_{i,j}(J)\geq b_{i,j}(I)$ for all $i$ and $j$.
\item[(iv)] $J\neq I$.
\item[(v)] $J$ is reverse lexicographically greater than $I$.
\end{itemize}
\end{corollary}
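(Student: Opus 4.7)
The plan is to derive each of (i)--(v) directly from the corresponding conclusion of Lemma \ref{isnice}, using Proposition \ref{polarlemma} to translate statements between $S$ and $\spo$. The key identity is part (vi) of Lemma \ref{isnice}, which gives $\tildej=(\tildej\cap S)\spo=J\spo$; this is precisely what allows Hilbert function and Betti number statements proved for $\tildej$ over $\spo$ to descend to $J$ over $S$.

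I would dispatch the easy items first. Part (v) is literally part (v) of Lemma \ref{isnice}, so nothing is to be shown. For (i), I would observe that $P\subset S$ and $P\subset P\spo\subset\tildej$ by Lemma \ref{isnice}(i), whence $P\subset\tildej\cap S=J$. For (iv), I would argue by contrapositive: if $J=I$, then $J\spo=I\spo$, which via Lemma \ref{isnice}(vi) forces $\tildej=I\spo$, contradicting Lemma \ref{isnice}(iv).

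For (ii) and (iii), I would invoke Proposition \ref{polarlemma}. Part (ii) of that proposition gives the equivalence $\Hilb(J)=\Hilb(I)$ if and only if $\Hilb(J\spo)=\Hilb(I\spo)$; since $J\spo=\tildej$, the right-hand side reduces to $\Hilb(\tildej)=\Hilb(I\spo)$, which is Lemma \ref{isnice}(ii). Part (i) of the polarization proposition gives $b_{i,j}(J)=b_{i,j}(J\spo)=b_{i,j}(\tildej)$ and $b_{i,j}(I)=b_{i,j}(I\spo)$, and the required inequality is then exactly Lemma \ref{isnice}(iii).

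There is no genuine obstacle at this stage: all of the substantive work (the root-of-unity trick encoded in $\tildephi$, the polarization computation $\tildephi(\pol_{b}(b^{e_{b}}))$, and the verification in Lemma \ref{isnice}(vi) that the resulting initial ideal $\tildej$ is actually extended from a subideal of $S$) has already been absorbed into Lemma \ref{isnice}. The corollary is essentially a bookkeeping translation from $\spo$ back to $S$; the only thing to keep straight is the distinction between the polarization $\ipo$ and the extension $I\spo$, both of which live in $\spo$ but play different roles in Proposition \ref{polarlemma}.
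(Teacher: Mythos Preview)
Your proposal is correct and matches the paper's intent: the corollary is stated without proof immediately after Lemma~\ref{isnice}, and your argument spells out exactly the intended translation from $\spo$ back to $S$ via Proposition~\ref{polarlemma} and part~(vi) of Lemma~\ref{isnice}.
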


\begin{proposition}\label{toborel}
 Let $I$ be a monomial ideal containing $P$.  Then
  there exists a Borel-plus-$P$ ideal $B$ such that $B$ has the same
  Hilbert function as $I$, and $b_{i,j}(B)\geq b_{i,j}(I)$ for all
  $i,j$.
\end{proposition}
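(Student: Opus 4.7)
The plan is to construct $B$ by iteratively applying Proposition~\ref{2no3easy} and Corollary~\ref{2no3hard}. Starting with $I_0 := I$, at the $k$th stage I check whether $I_k$ is Borel-plus-$P$; if so, set $B := I_k$, and otherwise select a pair of variables $(a,b)$ with $a >_{\mathrm{rev}} b$ such that $I_k$ is not $\{a,b\}$-compressed-plus-$P$ and apply Proposition~\ref{2no3easy} (when $P$ contains no power of $b$) or Corollary~\ref{2no3hard} (otherwise) to produce $I_{k+1}$. By the conclusions of those statements, $I_{k+1}$ contains $P$, has the same Hilbert function as $I_k$, satisfies $b_{i,j}(I_{k+1}) \geq b_{i,j}(I_k)$ for all $i,j$, and is strictly greater than $I_k$ in the reverse lexicographic order on ideals.

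Termination is immediate: the set of monomial ideals of $S$ containing $P$ with Hilbert function equal to $\Hilb(I)$ is finite, since each graded component is a subset of the finitely many monomials of that degree, and the reverse lexicographic order is a total order on this set. The strictly increasing sequence $I_0 <_{\mathrm{rev}} I_1 <_{\mathrm{rev}} \cdots$ must therefore stabilize, producing the desired $B$. Telescoping the pointwise Betti inequalities along the chain gives $b_{i,j}(B) \geq b_{i,j}(I)$ for all $i,j$, while $\Hilb(B) = \Hilb(I)$ is preserved at every step.

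The main obstacle is justifying, at each non-terminal stage, the existence of a pair $(a,b)$ such that $I_k$ is not $\{a,b\}$-compressed-plus-$P$. Equivalently, one needs the auxiliary claim that \emph{an ideal $I \supseteq P$ is Borel-plus-$P$ if and only if it is $\{a,b\}$-compressed-plus-$P$ for every pair of variables $(a,b)$}. The forward direction is immediate from Theorem~\ref{compressionbetti}(iv), since a Borel witness $\hat{I}$ for Borel-plus-$P$ is simultaneously $\{a,b\}$-compressed for every pair. The reverse direction is more delicate: the witnesses $\hat{I}_{(a,b)}$ with $\hat{I}_{(a,b)} + P = I$ may a priori vary with the pair, and one must construct a single Borel $\hat{I}$ with $\hat{I} + P = I$. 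I would approach this by passing to the decomposition $I = \bigoplus_f f I_f$ over monomials $f$ in the variables outside $\{a,b\}$; in the two-variable polynomial ring $\bbbk[a,b]$, each $I_f$ has the form $W_f + P_f$ for some lex ideal $W_f$, and a canonical choice of such $W_f$ (for instance, the unique smallest one) should glue across pairs into a single ideal $\hat{I}$, whose pairwise compressedness then implies the Borel property via Theorem~\ref{compressionbetti}(iv).
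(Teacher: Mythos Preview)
Your proposal is correct and follows the paper's proof essentially verbatim: iterate Proposition~\ref{2no3easy} or Corollary~\ref{2no3hard} on a pair $(a,b)$ for which $I$ fails to be $\{a,b\}$-compressed-plus-$P$, and terminate via the strict rev-lex increase within the finite set of monomial ideals with the given Hilbert function.

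The one difference is that you explicitly flag as the ``main obstacle'' the implication \emph{not Borel-plus-$P$ $\Rightarrow$ not $\{a,b\}$-compressed-plus-$P$ for some pair}, which the paper asserts without comment. Your concern about the witnesses $\hat{I}_{(a,b)}$ varying with the pair is legitimate, and your proposed resolution (pick a canonical smallest witness and apply Theorem~\ref{compressionbetti}(iv)) is on the right track but still a sketch. The cleanest way to make it precise is to fix once and for all $\hat{I}$ to be the ideal generated by the minimal monomial generators of $I$ other than the $x_k^{e_k}$, and then prove directly that $I$ is $\{a,b\}$-compressed-plus-$P$ if and only if this $\hat{I}$ is $\{a,b\}$-compressed. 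The paper does carry out exactly such an argument later (the unlabeled lemma at the start of Section~8, for the closely related witness $I'$ obtained by deleting only $b^{e_b}$); that proof adapts with only cosmetic changes to the pair-independent $\hat{I}$, after which Theorem~\ref{compressionbetti}(iv) gives the equivalence you need.
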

\begin{proof} If $I$ is not already Borel-plus-$P$, there exist pairs
  of variables $a,b$ such that $I$ is not
  $\{a,b\}$-compressed-plus-$P$.  Choose any such pair.  Define $J$ as
  in Corollary \ref{2no3hard} if $P$ contains some power of $b$, and as in
  Proposition \ref{2no3easy} otherwise. 
  By Corollary \ref{2no3hard} or
  Proposition \ref{2no3easy}, $J$ has the same Hilbert
  function as $I$ and larger Betti numbers.  
Replace $I$ with $J$ and repeat this procedure.  The
  process must terminate 
  since there are finitely many monomial ideals with the same Hilbert
  function, and at each step we are replacing the ideal with a reverse
  lexicographically greater one.  Let $B$ be the resulting ideal.
\end{proof}

Theorem \ref{boreltolex} completes the proof of Theorem
\ref{lppcharzero}.

\begin{proof}[Proof of Theorem \ref{lppcharzero}]
By Lemma \ref{anymonregseq}, we may assume without loss of
generality that $F=P$, that $I$ is a monomial ideal, and that
$\bbbk=\mathbb{C}$.  By Proposition \ref{toborel}, we may assume that $I$
is Borel-plus-$P$.  Thus, the desired inequality holds by Theorem
\ref{boreltolex}.
\end{proof}

\section{Further notation}
For the duration of the paper, $\bbbk$ will be an arbitrary field.
Frequently it will be necessary to slice modules more finely than is
possible with the standard grading.  To this end, we use the
multigraded structure of $S$:

\begin{notation}
We write multidegrees multiplicatively.  That is, we set  $\mdeg
x_{i}=\nobreak x_{i}$ for all $i$, so that the multidegrees are indexed by the
monomials of $S$.  We have $S=\bigoplus S_{m}$, where $m$ ranges over all
the monomials, and $S_{m}$ is the one-dimensional $\bbbk$-vector space
spanned by $\{m\}$.  The modules we consider will all inherit a
multigraded structure from $S$, and shifts in the grading will be
written multiplicatively, so, for monomials $u$ and $v$, we will have
$M(u^{-1})_{v}=M_{u^{-1}v}$ as vector spaces.  
\end{notation}

\begin{remark} Whenever we have a map $\phi:M\to N$ of graded
  (respectively, multigraded) modules, $\phi$ will be \emph{homogeneous
  of degree $0$} (resp., \emph{multihomogeneous of degree $1$}); that is,
  $\phi$ will satisfy $\phi(M_{d})\subset N_{d}$ for all $d$ (resp.,
  $\phi(M_{m})\subset N_{m}$ for all $m$).  Verification of this
  property for each of the maps defined in the paper is
  straightforward, and so will be omitted.
\end{remark}

\begin{definition}
If $\mathbb{F}$ is the minimal free resolution of $M$, and we
decompose the free modules $F_{i}$ as multigraded modules,
$F_{i}=\bigoplus S(m^{-1})^{b_{i,m}}$, we say that the $b_{i,m}$ are
the \emph{multigraded Betti numbers} of $M$.
\end{definition}

Tensoring the resolution $\mathbb{F}$ by $\bbbk$, we get
$b_{i}(M)=\dim_{\bbbk}\tor_{i}(\bbbk,M)$, 
$b_{i,j}(M)=\dim_{\bbbk}\tor_{i}(\bbbk,M)_{j}$, and
$b_{i,m}(M)=\dim_{\bbbk}\tor_{i}(\bbbk,M)_{m}$.

\begin{construction}
Since Tor is balanced, we can compute Betti numbers via a resolution
of $\bbbk$, thus avoiding the more difficult problem of computing a
resolution of $M$.   The minimal resolution of $\bbbk$ is given by the
Koszul complex 
\[
\mathbb{K}:K_{n}\to K_{n-1}\to\cdots \to K_{1}\to S\to \bbbk\to 0.
\]
Each $K_{i}$ is the $i^\mathrm{th}$ exterior power of $K_{1}$; it has
a free basis given by the symbols $e_{\mu}$, where $\mu$ ranges over
the 
squarefree monomials of degree $i$.  The symbol $e_{\mu}$ has degree $i$
and multidegree $\mu$.  If $\mu=x_{j_{1}}\cdots x_{j_{i}}$ with
$j_{1}<\cdots <j_{i}$, we write
$e_{\mu}=e_{x_{j_{1}}}\wedge\cdots\wedge e_{x_{j_{i}}}$.  The differential
is given on this basis by $D(e_{x_{j_{1}}}\wedge\cdots\wedge
e_{x_{j_{i}}})=\sum_{c=1}^{i}
(-1)^{c+1}x_{j_{c}}e_{\frac{\mu}{x_{j_{c}}}}$.  

Thus, the Betti numbers of $M$ can be computed from the homology of
the complex 
\[
\cdots\to M\tensor K_{i+1}\to M\tensor K_{i}\to\cdots.
\]
If $M=I$ is a monomial ideal of $S$, the module $M\tensor K_{i}$ is
the subcomplex of $\mathbb{K}$ generated
(as a $\bbbk$-vector space) by terms of the form $fe_{\mu}$,
where $f\in I$ 
is a monomial and $\mu$ is a squarefree monomial of degree $i$.  The
term $fe_{\mu}$ 
has degree $\deg(f\mu)$ and multidegree $f\mu$.  Its differential is
$D(fe_{\mu})=fD(e_{\mu})$.  

If $M=S/I$ is the quotient by a monomial ideal, then
$\Tor_{i}(S/I,\bbbk)=\Tor_{i-1}(I,\bbbk)$ from the
resolutions of $I$ and 
$S/I$.  We will, without comment, use the homology of
$\mathbb{K}\tensor I$ rather than  that of $\mathbb{K}\tensor S/I$ in
our computations.

\end{construction}

This approach yields a formula for the multigraded Betti numbers of
any monomial ideal.

\begin{definition}
Let $I$ be a monomial ideal, and let $m=\prod x_{\ell}^{e_{\ell}}$ be a
monomial.  Put $\supp(m)=\{x_{\ell}:e_{\ell}\neq 0\}$ and
$\sqrt{m}=\displaystyle\prod_{e_{\ell}\neq 0} 
x_{\ell}$.  The \emph{shadow of $m$ in $I$} is the squarefree monomial
ideal of $\bbbk[\supp(m)]$ given by
\[
\Shadow_{m}(I)=\sqfree\left((I:\frac{m}{\sqrt{m}})\cap 
\bbbk[\supp(m)]\right).
\]
(For a monomial 
ideal $J$, $\sqfree(J)$ is the ideal generated by the squarefree
monomials in $J$.)
\end{definition}

\begin{theorem}\label{keylemma}
Let $I$ be a monomial ideal, and fix a multidegree $m$.  Then, for all
integers $i$, the following numbers are equal:
\begin{itemize}
\item[(i)] $b_{i,m}(I)$.
\item[(ii)] $b_{i,m}(I\cap (\frac{m}{\sqrt{m}}))$.
\item[(iii)] $b_{i,\sqrt{m}}(I:\frac{m}{\sqrt{m}})$.
\item[(iv)] $b_{i,\sqrt{m}}(\Shadow_{m}(I))$.
\end{itemize}
\end{theorem}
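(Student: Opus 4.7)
The plan is to verify all four equalities by computing each Betti number as the dimension of a homology of the Koszul complex $\mathbb{K} \otimes I$ (or the analogous complex for the other ideals), and to exhibit, multidegree by multidegree, explicit isomorphisms of the underlying subcomplexes. The key observation driving everything is a description of the multidegree-$m$ strand of $I \otimes \mathbb{K}$: a basis consists of terms $f e_\mu$ where $f \in I$ is a monomial, $\mu$ is squarefree of degree $i$, and $f\mu = m$. The last condition forces $\mu \mid \sqrt{m}$ and $f = m/\mu$, so in particular $f$ is always divisible by $m/\sqrt{m}$.

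For the equality $(\mathrm{i}) = (\mathrm{ii})$, I would simply note that the above description shows every basis term $fe_\mu$ of $(I \otimes \mathbb{K})_m$ satisfies $f \in (m/\sqrt{m})$, so $f \in I \cap (m/\sqrt{m})$. Thus the multidegree-$m$ subcomplex of $I \otimes \mathbb{K}$ coincides, as a subcomplex of vector spaces, with that of $\bigl(I \cap (m/\sqrt{m})\bigr) \otimes \mathbb{K}$, and their homologies agree in every index $i$. For $(\mathrm{i}) = (\mathrm{iii})$, I would construct a map of complexes
\[
\Phi : \bigl((I : \tfrac{m}{\sqrt{m}}) \otimes \mathbb{K}\bigr)_{\sqrt{m}} \longrightarrow (I \otimes \mathbb{K})_m
\]
defined on basis by $g e_\mu \mapsto \tfrac{m}{\sqrt{m}} g \, e_\mu$. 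This is well-defined since $g \in I : (m/\sqrt{m})$ forces $(m/\sqrt{m})g \in I$, it preserves the Koszul differential (which acts only on the $e_\mu$ factor), and it is bijective on basis elements: if $f e_\mu$ is a basis of $(I \otimes \mathbb{K})_m$ then $f = (m/\sqrt{m})(\sqrt{m}/\mu)$, so its unique preimage is $(\sqrt{m}/\mu) e_\mu$. Hence $\Phi$ is an isomorphism of complexes, and passing to homology gives the equality of Betti numbers.

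For $(\mathrm{iii}) = (\mathrm{iv})$, I would observe that any basis element $g e_\mu$ of $\bigl((I : m/\sqrt{m}) \otimes \mathbb{K}\bigr)_{\sqrt{m}}$ satisfies $g \cdot \mu = \sqrt{m}$, so $g$ and $\mu$ are both squarefree with support in $\supp(m)$. By definition of $\Shadow_m(I)$, the monomial $g$ lies in $I : (m/\sqrt{m})$ if and only if it lies in $\Shadow_m(I)$. Therefore the multidegree-$\sqrt{m}$ strands of $(I : m/\sqrt{m}) \otimes \mathbb{K}$ and of $\Shadow_m(I) \otimes \mathbb{K}$ have identical bases, and since the Koszul differential in multidegree $\sqrt{m}$ involves only the variables in $\supp(m)$, the two subcomplexes are literally equal; hence their homologies agree. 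Finally one notes that the Betti numbers of $\Shadow_m(I)$ computed over $\bbbk[\supp(m)]$ and over $S$ coincide in multidegree $\sqrt{m}$ because the Koszul contributions from variables outside $\supp(m)$ are zero in this multidegree.

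There isn't really a hard step here; the proof is a careful bookkeeping exercise in the Koszul complex. The only mild subtlety is keeping straight that the colon $I : (m/\sqrt{m})$ is the appropriate operation (rather than, say, $I : m$), and verifying that the explicit maps $\Phi$ and the identification with $\Shadow_m(I)$ respect the Koszul differential; both follow because the differential acts only on the exterior-algebra factor $e_\mu$, and all the manipulations of the monomial factor $f$ or $g$ are through multiplication by fixed monomials supported in $\supp(m)$.
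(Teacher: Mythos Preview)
Your proposal is correct and follows essentially the same approach as the paper: both arguments identify the multidegree-$m$ strand of $\mathbb{K}\otimes I$ via the basis $\{fe_\mu : f\mu=m\}$, use this to get $(\mathrm{i})=(\mathrm{ii})$ and $(\mathrm{iii})=(\mathrm{iv})$ by observing that the relevant complexes literally coincide, and handle the remaining link via the multiplication-by-$\tfrac{m}{\sqrt{m}}$ isomorphism (you phrase this as $(\mathrm{i})=(\mathrm{iii})$ with your explicit $\Phi$, the paper as $(\mathrm{ii})=(\mathrm{iii})$, but it is the same map). Your write-up is somewhat more detailed in checking that $\Phi$ is a chain-map bijection, but the underlying argument is identical.
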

Note that (i), (ii), and (iii) are Betti numbers of ideals of $S$,
while (iv) is a Betti number of an ideal of $\bbbk[\supp(m)]$.
This ideal can, however, be treated as an ideal of $S$ without altering its
Betti numbers.  Note also that (iv) is a Betti number of a squarefree
ideal, and can be 
computed with Hochster's formula \cite{Ho}.

\begin{proof}
For a monomial $m$, the multigraded Betti number $b_{i,m}(I)$ is the
$i^{\mathrm{th}}$ homology of the complex of vector spaces
$\left(\mathbb{K}\tensor I\right)_{m}$, which has a $\bbbk$-basis given
by 
\[
\left\{fe_{\mu}:f\in I, f\mu=m, f\mbox{ and }\mu \mbox{ are
  monomials}, \mu\mbox{ is squarefree}\right\}.
\]
Since any $f$ appearing in this basis is contained in
$I\cap(\frac{m}{\sqrt{m}})$, it follows that
$(\mathbb{K}\tensor
I)_{m}=(\mathbb{K} \tensor
(I\cap(\frac{m}{\sqrt{m}})))_{m}$, so (i)
is equal to (ii).  

On the other hand, if $m$ is squarefree, then any $f$ appearing in
this basis is a squarefree monomial of $\bbbk[\Supp(m)]$.  Thus, the
complices $(\mathbb{K}\tensor(I:\frac{m}{\sqrt{m}}))_{\sqrt{m}}$ and 
$\left(\mathbb{K}\tensor\Shadow_{m}(I)\right)_{\sqrt{m}}$ are the same.
Hence (iii)=(iv).

The isomorphism $\frac{m}{\sqrt{m}}\cdot
(I:\frac{m}{\sqrt{m}})=I\cap (\frac{m}{\sqrt{m}})$ gives us
(ii)=(iii), completing the proof.  
\end{proof}

Finally, we recall ``combinatorial shifting'' of squarefree ideals.

\begin{definition} Let $I$ be a squarefree ideal (i.e., $I$ is
  generated by squarefree monomials).
We say that $I$ is \emph{squarefree Borel} or \emph{shifted} if it
satisfies the following 
property:
\begin{quotation}
Let $f$ be a monomial such that $fx_{i}$ and $fx_{j}$ are squarefree,
and suppose $i<j$.  Then $fx_{j}\in I \Rightarrow fx_{i}\in I$.  
\end{quotation}
\end{definition}
\noindent Shifted ideals arise as the Stanley-Reisner ideals of shifted
simplicial complices, and are well-studied in combinatorics.

\begin{definition} Fix two variables $a>_{\mathrm{lex}}b$.  The
  \emph{combinatorial shift} of a squarefree ideal $I$ is the ideal
  $\shift_{a,b}(I)$ generated by:
\[
\shift_{a,b}(I)=\left(\begin{array}{ccc}
f&:&f\in I\\
fa&:&fa\in I \mbox{ or }fb\in I\\
fb&:&fa\in I \mbox{ and }fb\in I\\
fab&:&fab\in I
			\end{array}\right),
\]
where $f$ runs over all the squarefree monomials not divisible by $a$
or $b$.  
\end{definition}

Combinatorial shifting was introduced by Erd\"os, Ko, and Rado \cite{EKR} for
simplicial complexes.  Their definition is equivalent to the one given
above under the Stanley-Reisner correspondence.
The ideal $\shift_{a,b}(I)$ may readily be shown to be a squarefree
ideal having the same 
Hilbert function as $I$, and any squarefree ideal can be transformed
into a shifted ideal by a sequence of combinatorial shifts.
A generalization of this construction to (not necessarily squarefree)
monomial ideals is a major element in our proof of Theorem \ref{lpp}.
In \cite{HM}, Murai and Hibi show that Betti numbers increase under
combinatorial shifting:

\begin{theorem}[\cite{HM}]\label{HM}
Let $I$ be a squarefree ideal, and put $J=\shift_{a,b}(I)$.  Then
$b_{i,j}(S/J)\geq b_{i,j}(S/I)$ for all $i$ and $j$.
\end{theorem}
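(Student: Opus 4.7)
My plan is to pass through the Stanley--Reisner correspondence and Hochster's formula, reducing the statement to a family of inequalities on reduced simplicial homology of restricted complexes. Let $\Delta$ and $\Delta'$ denote the Stanley--Reisner complexes of $I$ and $J=\Shift_{a,b}(I)$ respectively; the ideal-level combinatorial shift is precisely the classical Erd\H os--Ko--Rado shift on $\Delta$, namely swapping each face $F$ containing $b$ but not $a$ (with $(F\smallsetminus\{b\})\cup\{a\}\notin\Delta$) for $(F\smallsetminus\{b\})\cup\{a\}$, while leaving all other faces alone.

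Since $I$ and $J$ are squarefree, only squarefree multidegrees carry nonzero multigraded Betti numbers, and by Hochster's formula,
\[
b_{i,W}(S/I)=\dim_\bbbk\widetilde H_{|W|-i-1}(\Delta|_W;\bbbk),
\]
with the analogous expression for $J$, for every subset $W\subseteq\{x_1,\dots,x_n\}$. Thus the theorem reduces to the assertion that, for every $W$ and every integer $k$, $\dim\widetilde H_k(\Delta'|_W)\geq\dim\widetilde H_k(\Delta|_W)$.

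I would prove this by induction on $|W|$ together with a case analysis on $\{a,b\}\cap W$. When $\{a,b\}\cap W=\varnothing$, the face sets of the two restrictions coincide verbatim. When exactly one of $a,b$ lies in $W$, a direct check from the shift rule gives a strict containment of restrictions whose direction depends on which variable is present, and the extra faces are controlled by the shift data; invoking the long exact sequence of the relevant pair together with the inductive hypothesis at smaller restrictions delivers the inequality. The substantive case is $\{a,b\}\subseteq W$, where I would apply the Mayer--Vietoris decomposition $\Delta|_W=\mathrm{del}_b(\Delta|_W)\cup\mathrm{star}_b(\Delta|_W)$ (the star being a cone and therefore contractible) together with the analogous decomposition of $\Delta'|_W$. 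Because the shift commutes in a controlled way with links and deletions at $a$ and $b$, every piece appearing in the two Mayer--Vietoris sequences reduces, on passing to a smaller vertex set, to data for which the inductive hypothesis applies.

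The main obstacle is converting the several component inequalities supplied by induction into a genuine degree-by-degree inequality on the total homology: matching Euler characteristics is automatic, but in general a collection of inequalities between the pieces of a long exact sequence need not yield inequalities between the pieces of the total complex. To surmount this I would perform the comparison at the chain level. Using the face bijection induced by the shift, I would define an explicit chain map between the Mayer--Vietoris complexes of $\Delta|_W$ and $\Delta'|_W$, and show by a careful sign and orientation check — exploiting the symmetry of the shift's action on the four pieces indexed by $\varnothing,\{a\},\{b\},\{a,b\}$ — that the induced map on reduced homology is injective in every degree. Carrying out this bookkeeping is where the technical heart of the Hibi--Murai argument lies.
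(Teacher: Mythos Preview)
Your reduction is wrong at the very first step: the multigraded inequality you propose to prove, namely
\[
\dim_{\bbbk}\widetilde H_{k}(\Delta'|_{W};\bbbk)\ \geq\ \dim_{\bbbk}\widetilde H_{k}(\Delta|_{W};\bbbk)\qquad\text{for every }W,
\]
is simply false. Take $S=\bbbk[x_{1},x_{2},x_{3}]$, $a=x_{1}$, $b=x_{2}$, and $I=(x_{2}x_{3})$. Then $J=\Shift_{a,b}(I)=(x_{1}x_{3})$. For $W=\{x_{2},x_{3}\}$ one has $\Delta|_{W}$ equal to two isolated points (so $\widetilde H_{0}=\bbbk$), while $\Delta'|_{W}$ is the full $1$-simplex (so $\widetilde H_{0}=0$). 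Equivalently, $b_{1,x_{2}x_{3}}(S/I)=1>0=b_{1,x_{2}x_{3}}(S/J)$. Thus the statement you set out to prove by induction is not true, and no amount of chain-level bookkeeping will rescue it. Your own observation that ``the direction of containment depends on which variable is present'' is exactly the symptom: when $b\in W$ and $a\notin W$ the inequality can and does go the wrong way.

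What \emph{is} true, and what suffices for the graded theorem, is the orbit inequality
\[
b_{i,W}(S/J)+b_{i,\sigma(W)}(S/J)\ \geq\ b_{i,W}(S/I)+b_{i,\sigma(W)}(S/I),
\]
where $\sigma$ swaps $a$ and $b$. This is how the original Murai--Hibi argument (and section~5 of the present paper, which generalizes it) proceeds: one shows $I\cap\sigma(I)=J\cap\sigma(J)$ and $I+\sigma(I)=J+\sigma(J)$, feeds these into the Mayer--Vietoris sequence, and proves that a certain connecting map vanishes for the shifted ideal $J$. The paper's short proof of Theorem~\ref{HM} takes an entirely different route: it realizes $J$ (after a polarization step) as a reverse-lex initial ideal of a linear change of coordinates applied to $I$, so that the Betti-number inequality follows from the standard upper-semicontinuity of Betti numbers under Gr\"obner degeneration. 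Either of these approaches would work; yours, as stated, cannot.
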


The proof given in \cite{HM} (which is the inspiration for section 5 of this
paper) is involved.  For the convenience of the reader, and in the
spirit of our proof of Theorem 
\ref{lppcharzero}, we give a shorter proof here:

\begin{proof}
Let $\phi$ be the automorphism of $S$ given by $\phi(b)=a-b$ and
$\phi(x_{k})=x_{k}$ for $x_{k}\neq b$.  Put
$I'=\init_{\mathrm{rev}}(\phi(I))$.  A straightforward computation shows
\[
I'\supseteq \left(\begin{array}{ccc}
f&:&f\in\shift_{a,b}(I)\\
fa&:&fa\in \shift_{a,b}(I)\\
fb&:&fb\in \shift_{a,b}(I)\\
fa^{2}&:&fab\in \shift_{a,b}(I)
			\end{array}\right),
\]
for all squarefree monomials $f$ not divisible by $a$ or $b$.

Define the automorphism $\tildephi$ of $\spo$ by
$\tildephi(c_{k})=b-c_{k}$ for all $c_{k}$
and $\tildephi(x_{\ell})=x_{\ell}$ for all $x_{\ell}$, and set
$\tildej=\init_{\mathrm{rev}}(\tildephi((I')^{\mathrm{po}}))$.  

A straightforward computation gives us  $\tildej\supseteq J\spo$.
Since $\Hilb(I)=\Hilb(I')=\Hilb(J)$, it follows from Proposition
\ref{polarlemma}(ii) that $J\spo$ and
$\tildephi((I')^{\mathrm{po}})$ have the same Hilbert function; hence
$J\spo$ and $\tildej$ have the same Hilbert function.  Thus
$\tildej=J\spo$.  Hence, by Proposition \ref{polarlemma}(i), we have
\[b_{i,j}(S/J)=b_{i,j}(\spo/\jpo)\geq
b_{i,j}(\spo/(I')\spo)=b_{i,j}(S/I')\geq b_{i,j}(S/I).\qedhere\]
\end{proof}

\section{Shifted ideals}
Throughout the rest of the paper, we fix two variables $a$ and $b$,
with $a$ before $b$ in the lex order.  Furthermore $\ell$
(``large'') 
and $s$ (``small'') will always be integers with $\ell\gneqq s\geq 0$, and
$f$ will be a  
monomial not divisible by either $a$ or $b$.  

We begin by generalizing ``shifting'' to arbitrary monomial ideals.

\begin{definition}
Let $I$ be a monomial ideal.  We say that $I$ is
\emph{$(a,b)$-shifted} if, whenever $fa^{s}b^{\ell}\in I$, we have
$fa^{\ell}b^{s}\in I$ as well.  For an integer $t$, we say that $I$ is
\emph{$(a,b,t)$-shifted} if, whenever $fa^{s}b^{\ell+t}\in I$, we have
$fa^{\ell}b^{s+t}\in I$ as well.  Finally, we say that $I$ is
\emph{$(a,b)$-strongly shifted} if $I$ is $(a,b,t)$-shifted for all
nonnegative $t$.  
\end{definition}

\begin{remark}
Suppose that $I$ is a squarefree ideal.  Then $I$ is $(a,b)$-shifted if and
only if $I$ is $\{a,b\}$-squarefree compressed (as defined in
\cites{Me3, MPS}), and shifted if and only if it is $(a,b)$-shifted
for all $a$ and $b$.  
\end{remark}

\begin{definition}
Let $I$ be a monomial ideal.  We define the \emph{$(a,b)$-shift of
  $I$} as the $\bbbk$-vector space  
\[
J=\shiftme_{a,b}(I)=\left<\begin{array}{ccc}
fa^{s}b^{s}&:&fa^{s}b^{s}\in I\\
fa^{\ell}b^{s}&:&fa^{\ell}b^{s}\in I \mbox{ or }fa^{s}b^{\ell}\in I\\
fa^{s}b^{\ell}&:&fa^{\ell}b^{s}\in I \mbox{ and }fa^{s}b^{\ell}\in I
			\end{array}\right>
\]
this basis taken over all $f$ and all pairs $(s,\ell)$ with $s\lneqq\ell$.  

For nonnegative integers $t$, we would like to define the $t^{\mathrm{th}}$
  $(a,b)$-shift of $I$ as
  $\shiftme_{a,b,t}(I)=a^{-t}\shiftme_{a,b}(a^{t}I)$, but it is not
  obvious \emph{a priori} that this even makes sense.  Instead, we
  define the \emph{$t^{\mathrm{th}}$ $(a,b)$-shift of $I$} as
  the $\bbbk$-vector space
\[
J=\shiftme_{a,b,t}(I)=\left<\begin{array}{ccc}
fa^{s}b^{r}&:&fa^{s}b^{r}\in I, r\lneqq t\\
fa^{s}b^{s+t}&:&fa^{s}b^{s+t}\in I\\
fa^{\ell}b^{s+t}&:&fa^{\ell}b^{s+t}\in I \mbox{ or }fa^{s}b^{\ell+t}\in I\\
fa^{s}b^{\ell+t}&:&fa^{\ell}b^{s+t}\in I \mbox{ and }fa^{s}b^{\ell+t}\in I
			\end{array}\right>,
\]
this basis taken over all $f$, all $r\lneqq t$, and all pairs $s\lneqq \ell$.
In Proposition \ref{tshiftisshift}, we will show that this is
equivalent to the desired definition.
\end{definition}

The shifting operation modifies the ideal $I$ by replacing, wherever
possible, monomials of the form $fa^{s}b^{\ell}$ with the (lexicographically
bigger) $fa^{\ell}b^{s}$.  Where this is impossible (because
$fa^{\ell}b^{s}$ is already present), it instead does nothing.  Note
that $\shiftme_{a,b}(I)=\shiftme_{a,b,0}(I)$.

\begin{proposition}\label{shiftmacaulay}
Let $J=\shiftme_{a,b,t}(I)$.  Then:
\begin{itemize}
\item[(i)] $J$ is an ideal.
\item[(ii)] $J$ is $(a,b,t)$-shifted.
\item[(iii)] $J$ has the same Hilbert function as $I$.
\item[(iv)]  $J$ is reverse lexicographically
  greater than or equal to $I$.
\end{itemize}
\end{proposition}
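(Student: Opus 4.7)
The plan is to reduce everything to a fiber-by-fiber analysis in the two variables $a$ and $b$. Fix a monomial $f$ divisible by neither $a$ nor $b$, and set $I_f = \{(p,q) \in \mathbb{N}^2 : fa^p b^q \in I\}$ and $J_f = \{(p,q) \in \mathbb{N}^2 : fa^p b^q \in J\}$. Because $I$ is an ideal, each $I_f$ is up-closed in the componentwise order on $\mathbb{N}^2$, and $I_f \subseteq I_{fx_k}$ whenever $x_k \neq a,b$. The defining rules for $J$ are local in $f$: $J_f$ depends only on $I_f$.

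For (ii), (iii), and (iv), I organize the basis of $J$ by fibers. On the region $q < t$ the monomials of $J$ coincide with those of $I$. On the region $q \geq t$, I write $q = q' + t$ and group the lattice points into pairs $\{(p, q'+t), (q', p+t)\}$ when $p \neq q'$ (singletons when $p = q'$). The definitions of types (B), (C), and (D) translate to the rule: on each pair-fiber, the rev-lex-larger monomial $fa^{\max(p,q')}b^{\min(p,q')+t}$ belongs to $J$ iff at least one of the two lies in $I$, while the rev-lex-smaller one belongs to $J$ iff both lie in $I$. This yields a degree-preserving bijection between the monomial bases of $I$ and $J$ on each fiber, giving (iii). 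Since the only change on each pair-fiber replaces a possibly-present rev-lex-smaller monomial with its rev-lex-larger partner, Lemma~\ref{2no1} gives (iv). For (ii), a type-(D) basis element $fa^s b^{\ell+t}$ of $J$ forces $fa^{\ell}b^{s+t} \in I$, which in turn places $fa^{\ell}b^{s+t}$ in $J$ as a type-(C) element.

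Statement (i) is the main technical content. Multiplication by any $x_k \neq a, b$ reduces to verifying that $I_f \subseteq I_{fx_k}$ implies $J_f \subseteq J_{fx_k}$, which is immediate from a three-line check against the shift conditions. Multiplication by $a$ and by $b$ amounts to proving that each $J_f$ is up-closed in $\mathbb{N}^2$. I handle this by case analysis: given $(p,q) \in J_f$, I classify whether $(p,q)$ is of type (A), (B), (C), or (D), pass to $(p+1, q)$ or $(p, q+1)$, and check that the resulting lattice point again satisfies whichever type now applies. In every case, the needed memberships in $I_f$ follow from the original hypothesis together with up-closedness of $I_f$. The mildly delicate situations are those where multiplication moves the point across the diagonal $p = q'$, forcing a transition among the (B), (C), and (D) conditions, but each individual verification is a one-line computation. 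The main obstacle is thus not conceptual but simply the length of the case analysis needed to exhaust all transitions among the four types.
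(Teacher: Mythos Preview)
Your proof is correct and follows essentially the same route as the paper: the paper declares (ii), (iii), (iv) ``immediate'' and proves (i) by exactly the four-case analysis you describe (your types (A)--(D) are precisely the paper's cases $m=fa^{s}b^{r}$ with $r<t$, $m=fa^{s}b^{s+t}$, $m=fa^{\ell}b^{s+t}$, $m=fa^{s}b^{\ell+t}$). Your fiber language $I_f,J_f\subset\mathbb{N}^2$ and the separation of the $x_k$-case from the $a,b$-cases is a tidy repackaging, but not a different argument.
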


\begin{proof}
(ii), (iii), and (iv) are immediate; we prove (i).

It suffices to show that, for any monomial $m\in J$, we have $ma\in
J$, $mb\in J$, and $mx_{i}\in J$ for any $x_{i}\neq a,b$.  We consider
four cases, depending on the form of $m$.  

Suppose first that $m=fa^{s}b^{r}$ with $r\lneqq t$.  Then $m\in I$,
so we have 
$ma\in I\Rightarrow ma\in J$ and $mx_{i}\in I\Rightarrow mx_{i}\in
J$.  Also, $mb=fa^{s}b^{r+1}\in I$. If $r+1\lneqq t$ this implies
$mb\in J$ immediately; if $r+1=t$ we have $mb=fa^{s}b^{t+0}$, and
$s\geq 0$ gives us $mb\in J$.

Now suppose that $m=fa^{s}b^{s+t}$.  Then $mx_{i}\in I\Rightarrow
mx_{i}\in J$.  Furthermore, $fa^{s+1}b^{s+t}\in I$ and
$fa^{s}b^{s+1+t}\in I$, so these must both be in $J$ as well.

Thirdly, suppose that $m=fa^{\ell}b^{s+t}$.  Then we have
$fa^{\ell}b^{s+t}$ or $fa^{s}b^{\ell+t}$ in   $I$.  It follows that
$mx_{i}\in J$ because $fx_{i}a^{\ell}b^{s+t}\in I$ or
$fx_{i}a^{s}b^{\ell+t}\in I$, that $ma\in J$ because
$fa^{\ell+1}b^{s+t}\in I$ or $fa^{s}b^{\ell+1+t}\in I$, and that
$mb\in J$ because $fa^{\ell}b^{s+1+t}\in I$ or $fa^{s+1}b^{\ell+t}\in
I$.

Finally, suppose that $m=fa^{s}b^{\ell+t}$.  Then we have
$fa^{\ell}b^{s+t}$ and $fa^{s}b^{\ell+t}$ in $I$.  It follows that
$mx_{i}\in J$ because $fx_{i}a^{\ell}b^{s+t}\in I$ and
$fx_{i}a^{s}b^{\ell+t}\in I$, that $ma\in J$ because
$fa^{\ell+1}b^{s+t}\in I$ and $fa^{s}b^{\ell+1+t}\in I$, and that
$mb\in J$ because $fa^{\ell}b^{s+1+t}\in I$ and $fa^{s+1}b^{\ell+t}\in
I$.
\end{proof}

\begin{remark}
For simplicity, let $t=0$ (or make the appropriate changes for
arbitrary $t$).    We could attempt to define a ``pseudograding'' on
$S$ by setting $\mathrm{pdeg}\  m=m$ for a monomial not of the form
$fa^{s}b^{\ell}$, and $\mathrm{pdeg}\
fa^{s}b^{\ell}=fa^{\ell}b^{s}$.  (This is not an actual grading
because $S_{m}S_{n}\not\subseteq S_{mn}$.)  In this pseudograding, $S_{m}$
has dimension $1$ or $2$ for every pseudodegree $m$, and the lex
ideals are precisely the shifted ideals.  Proposition
\ref{shiftmacaulay} states that every pseudo-Hilbert function is
attained by a pseudo-lex ideal, i.e., Macaulay's theorem \cite{Ma}
holds in this setting.  The next natural question is whether the
theorem of Bigatti, Hulett, and Pardue \cites{Bi, Hu, Pa} on Betti
numbers holds as well.  Corollaries \ref{fixedmulti} and
\ref{unfixedmulti} will show that it does.
\end{remark}

\begin{proposition}\label{tshiftisshift}
 Let $J=\shiftme_{a,b,t}(I)$.  Then
  $a^{t}J=\shiftme_{a,b,0}(a^{t}I)$.  
\end{proposition}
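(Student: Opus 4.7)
The plan is to show that $a^t J$ and $\shiftme_{a,b,0}(a^t I)$ have the same $\bbbk$-basis of monomials, whence they agree as subsets of $S$. The key observation is elementary: for any monomial $g$ not divisible by $a$ or $b$, we have $g a^u b^v \in a^t I$ if and only if $u \geq t$ and $g a^{u-t} b^v \in I$.

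First I would multiply each of the four basis families in the definition of $\shiftme_{a,b,t}(I)$ by $a^t$, exhibiting $a^t J$ as the $\bbbk$-span of four families of monomials: (i) $f a^{s+t} b^r$ with $r \lneqq t$ and $f a^s b^r \in I$; (ii) $f a^{s+t} b^{s+t}$ with $f a^s b^{s+t} \in I$; (iii) $f a^{\ell+t} b^{s+t}$ ($s \lneqq \ell$) with $f a^\ell b^{s+t} \in I$ or $f a^s b^{\ell+t} \in I$; and (iv) $f a^{s+t} b^{\ell+t}$ ($s \lneqq \ell$) with both $f a^\ell b^{s+t} \in I$ and $f a^s b^{\ell+t} \in I$.

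Next I would unpack $\shiftme_{a,b,0}(a^t I)$ using the three families of the 0-shift definition, translating each condition ``$\,\cdot \in a^t I$'' via the key observation. The equal-power family ($f a^u b^u \in a^t I$) forces $u \geq t$, and the substitution $u = s+t$ recovers family (ii). The $b$-larger family ($f a^u b^w$ with $u \lneqq w$ and both $f a^w b^u, f a^u b^w \in a^t I$) forces $u \geq t$, and substituting $u = s+t$, $w = \ell+t$ recovers (iv). The $a$-larger family ($f a^w b^u$ with $u \lneqq w$ and $f a^w b^u \in a^t I$ or $f a^u b^w \in a^t I$) splits on whether $u \geq t$ or $u \lneqq t$: the former, via $u = s+t$ and $w = \ell+t$, recovers (iii); in the latter case only $f a^w b^u \in a^t I$ can hold (since $u \lneqq t$ blocks the other containment), forcing $w \geq t$, and writing $w = s+t$ with $u = r$ recovers (i).

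The main care needed is the case split on the $a$-larger family of the 0-shift, together with verifying that the correspondence is a bijection with neither overlap nor omission. This is transparent once one observes that the four families of $a^t J$ are separated by the sign of ``$a$-exponent minus $b$-exponent'' together with whether the $b$-exponent is below $t$, and that the refinement of the three 0-shift families produces the same partition.
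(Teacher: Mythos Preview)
Your proposal is correct and follows essentially the same approach as the paper: both arguments compare monomial bases directly, using the observation that $fa^{u}b^{v}\in a^{t}I$ if and only if $u\geq t$ and $fa^{u-t}b^{v}\in I$. The paper rewrites $a^{t}J$ line by line until it matches the $0$-shift description of $a^{t}I$, while you unpack both sides and match the resulting four families; the crucial step in each case is the same, namely that for the $a$-larger family with $b$-exponent $u\lneqq t$ the disjunct $fa^{u}b^{w}\in a^{t}I$ is vacuous.
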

\begin{proof}
As vector spaces, we have
\begin{align*}
a^{t}J&=\left<\begin{array}{ccc}
fa^{s+t}b^{r}&:&r\lneqq t, fa^{s}b^{r}\in I\\
fa^{s+t}b^{s+t}&:&fa^{s}b^{s+t}\in I\\
fa^{\ell+t}b^{s+t}&:&fa^{\ell}b^{s+t}\in I \mbox{ or }fa^{s}b^{\ell+t}\in I\\
fa^{s+t}b^{\ell+t}&:&fa^{\ell}b^{s+t}\in I \mbox{ and }fa^{s}b^{\ell+t}\in I
			\end{array}\right>\\
&= \left<\begin{array}{ccc}
fa^{s+t}b^{r}&:&r\lneqq t, fa^{s+t}b^{r}\in a^{t}I\\
fa^{s+t}b^{s+t}&:&fa^{s+t}b^{s+t}\in a^{t}I\\
fa^{\ell+t}b^{s+t}&:&fa^{\ell+t}b^{s+t}\in a^{t}I \mbox{ or
}fa^{s+t}b^{\ell+t}\in a^{t}I\\
fa^{s+t}b^{\ell+t}&:&fa^{\ell+t}b^{s+t}\in a^{t}I \mbox{ and
}fa^{s+t}b^{\ell+t}\in a^{t}I 
			\end{array}\right>\\
&= \left<\begin{array}{ccc}
fa^{s+t}b^{r}&:&r \lneqq t, fa^{s+t}b^{r}\in a^{t}I \mbox{ or }
fa^{r}b^{s+t}\in a^{t}I\\
fa^{s+t}b^{s+t}&:&fa^{s+t}b^{s+t}\in a^{t}I\\
fa^{\ell+t}b^{s+t}&:&fa^{\ell+t}b^{s+t}\in a^{t}I \mbox{ or
}fa^{s+t}b^{\ell+t}\in a^{t}I\\
fa^{s+t}b^{\ell+t}&:&fa^{\ell+t}b^{s+t}\in a^{t}I \mbox{ and
}fa^{s+t}b^{\ell+t}\in a^{t}I 
			\end{array}\right>\\
&=\shiftme_{a,b,0}(a^{t}I).\qedhere
\end{align*}
\end{proof}

We now study the effect of shifting on Betti numbers.
Our main result is the following:

\begin{theorem}\label{strongshiftbetti}
 Let $J=\shiftme_{a,b,t}(I)$.  Then for all $i,j$ one has
  $b_{i,j}(J)\geq b_{i,j}(I)$.  
\end{theorem}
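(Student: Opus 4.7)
The plan mimics the proof of Theorem~\ref{HM} given above, passing through polarization to handle non-squarefree exponents.

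First, I reduce to $t=0$. By Proposition~\ref{tshiftisshift}, $a^{t}J=\Shift_{a,b,0}(a^{t}I)$, and since $I\to a^{t}I$ is a graded $S$-module isomorphism shifting degrees uniformly by $t$, the Betti-number inequality for $(I,J)$ is equivalent to the same inequality for $(a^{t}I,a^{t}J)$. So we may assume $t=0$, and set $J=\Shift_{a,b,0}(I)$. Define $\phi\colon S\to S$ by $\phi(b)=a-b$ and $\phi(x_{k})=x_{k}$ for $x_{k}\ne b$, and let $I'=\init_{\mathrm{rev}}(\phi(I))$. A direct computation, analogous to but more elaborate than the one in the proof of Theorem~\ref{HM}, shows that $I'$ already captures much of $J$, but that $\phi$ alone is insufficient for the monomials where the shift ``doubles up''---namely, those $fa^{s}b^{\ell}\in J$ arising when both $fa^{\ell}b^{s}$ and $fa^{s}b^{\ell}$ lie in $I$. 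To remedy this, I pass to the polarization $\spo$ of $S$ along $b$, form $(I')^{\mathrm{po}}\subset\spo$, and apply a second automorphism $\tildephi$ of $\spo$ given by $\tildephi(c_{k})=b-c_{k}$ and $\tildephi(x_{\ell})=x_{\ell}$, as in the proof of Theorem~\ref{HM}. Setting $\tildej=\init_{\mathrm{rev}}(\tildephi((I')^{\mathrm{po}}))$, the key structural claim is $\tildej\supseteq J\spo$; combined with the Hilbert-function equalities (Proposition~\ref{polarlemma}(ii) together with the fact that $\init_{\mathrm{rev}}$ and automorphisms preserve Hilbert functions), this forces $\tildej=J\spo$.

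Once $\tildej=J\spo$ is established, the desired inequality follows from the chain
\[b_{i,j}(J)=b_{i,j}(J\spo)=b_{i,j}(\tildej)\ge b_{i,j}((I')^{\mathrm{po}})=b_{i,j}(I')\ge b_{i,j}(I),\]
where the equalities come from Proposition~\ref{polarlemma}(i) together with the fact that $\tildephi$ is an automorphism (and hence preserves Betti numbers), while the two inequalities come from replacing an ideal by its reverse-lex initial ideal, an operation which cannot decrease graded Betti numbers.

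The main obstacle is the case analysis verifying $\tildej\supseteq J\spo$: one must track, for each of the three generator types in the definition of $\Shift_{a,b,0}(I)$ (namely $fa^{s}b^{s}$, $fa^{\ell}b^{s}$, and $fa^{s}b^{\ell}$), that the corresponding polarized generator actually lies in $\tildej$. This is combinatorially more intricate than the squarefree analogue in Theorem~\ref{HM}, since arbitrary powers of $b$ in generators of $I$ produce arbitrary strings of polarization variables $c_{k}$ in $\spo$, and the interaction between $\tildephi$ and $\init_{\mathrm{rev}}$ on these strings must be tracked carefully. Degenerate cases where $b$ fails to appear in any generator can be handled separately and are easy, since $\Shift_{a,b,0}$ then acts trivially on the relevant multidegrees.
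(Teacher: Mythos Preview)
Your reduction to $t=0$ via Proposition~\ref{tshiftisshift} is correct and is exactly what the paper does. The trouble is the second step: the short automorphism-and-polarization argument used for Theorem~\ref{HM} does \emph{not} extend to non-squarefree ideals, and your key containment $\tildej\supseteq J\spo$ is false in general.

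Take $I=(a^{2},b^{2})\subset\bbbk[a,b]$ (characteristic $\neq 2$). Then $I$ is already $(a,b)$-shifted, so $J=\Shift_{a,b,0}(I)=I$. Applying $\phi(b)=a-b$ gives $\phi(I)=(a^{2},(a-b)^{2})$, whose degree-$2$ part is spanned by $a^{2}$ and $-2ab+b^{2}$; hence $I'_{2}=\Span\{a^{2},ab\}$ and one checks $I'=(a^{2},ab,b^{3})$. Now polarize along $b$ and apply $\tildephi(c_{k})=b-c_{k}$: since $\tildephi$ fixes $a$ and $b$, the degree-$2$ part of $\tildephi((I')^{\mathrm{po}})$ is still $\Span\{a^{2},ab\}$, so $\tildej_{2}=\Span\{a^{2},ab\}$. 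But $(J\spo)_{2}=\Span\{a^{2},b^{2}\}$, and $b^{2}\notin\tildej$. Thus $\tildej\not\supseteq J\spo$. (Polarizing along $a$ instead does not help: one finds $\tildej_{2}=\Span\{ab,ac_{1}\}$, which again misses $a^{2}$.) The underlying reason is that $\init_{\mathrm{rev}}(\phi(I))$ is merely some revlex-greater monomial ideal, not the specific ideal $\Shift_{a,b}(I)$; in the squarefree case the two happen to be close enough that a single polarization repairs the discrepancy, but with higher powers present they can differ arbitrarily.

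The paper therefore takes a completely different route for the $t=0$ case (Theorem~\ref{shiftbetti}): it works multidegree by multidegree. For multidegrees $m$ fixed by the involution $\sigma$ swapping $a$ and $b$, Lemma~\ref{shadowlemma} shows that $\Shadow_{m}$ commutes with $\Shift_{a,b}$, reducing to the squarefree Theorem~\ref{HM} via Theorem~\ref{keylemma}. For multidegrees not fixed by $\sigma$, a Mayer--Vietoris argument using Proposition~\ref{symmetry} (that $I\cap\sigma(I)=J\cap\sigma(J)$ and $I+\sigma(I)=J+\sigma(J)$) together with the vanishing of $\ker\Delta$ for shifted ideals (Proposition~\ref{mayervietoriskernel}) gives the paired inequality $b_{i,m}(J)+b_{i,\sigma(m)}(J)\geq b_{i,m}(I)+b_{i,\sigma(m)}(I)$. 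Summing over orbits yields the graded result.
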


The proof involves several lemmas and sub-propositions.  We begin by
considering the case $t=0$.  Our argument follows
 Murai and Hibi's original proof of Theorem \ref{HM} \cite{HM} very
closely.  In the case that $I$ is squarefree, the arguments are identical.

\begin{definition}
Let $\sigma:S\to S$ be the $\bbbk$-algebra involution defined by
$\sigma(a)=b$, $\sigma(b)=a$, and $\sigma(x_{i})=x_{i}$ for all
$x_{i}\neq a,b$.  
\end{definition}

Since $\sigma$ is an automorphism, it extends to resolutions, and we
have, for example, $b_{i,j}(I)=b_{i,j}(\sigma(I))$ for all graded ideals
$I$.  In fact, $\sigma$ acts naturally on the multigrading, so we have
$b_{i,m}(I)=b_{i,\sigma(m)}(\sigma(I))$ for all monomial ideals.  Note
that $\sigma$ fixes monomials of the form $fa^{s}b^{s}$, and
partitions the other monomials into orbits of cardinality two,
$\sigma(fa^{\ell}b^{s})=fa^{s}b^{\ell}$.

\begin{proposition}\label{symmetry}
 Let $J=\shiftme_{a,b}(I)$.  Then we have $I\cap \sigma(I)=J\cap \sigma(J)$ and
 $I+\sigma(I)=J+\sigma(J)$.  
\end{proposition}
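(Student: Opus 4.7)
The plan is to verify both equalities monomial-by-monomial. Since $I$, $J=\Shiftme_{a,b}(I)$, and their $\sigma$-images are all spanned as $\bbbk$-vector spaces by monomials, and $\sigma$ permutes monomials, it suffices to check each claim on each monomial $m\in S$. Partition the monomials of $S$ into $\sigma$-fixed ones of the form $fa^{s}b^{s}$ (with $f$ not divisible by $a$ or $b$) and size-two $\sigma$-orbits $\{fa^{\ell}b^{s},\,fa^{s}b^{\ell}\}$ with $s\lneqq\ell$, and treat the two cases separately.

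On a fixed monomial $fa^{s}b^{s}$, the definition of $J$ gives $fa^{s}b^{s}\in J$ if and only if $fa^{s}b^{s}\in I$, and $\sigma$ acts trivially on this monomial. Consequently membership in all four of $I$, $\sigma(I)$, $J$, $\sigma(J)$ coincides on $fa^{s}b^{s}$, and both identities are immediate on these monomials.

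For a two-element orbit, read off the definition: $fa^{\ell}b^{s}\in J$ if and only if the \emph{or}-condition ``$fa^{\ell}b^{s}\in I$ or $fa^{s}b^{\ell}\in I$'' holds, whereas $fa^{s}b^{\ell}\in J$ if and only if the \emph{and}-condition ``$fa^{\ell}b^{s}\in I$ and $fa^{s}b^{\ell}\in I$'' holds. Since $\sigma$ swaps the two monomials of the orbit, the roles of \emph{and} and \emph{or} are exchanged for $\sigma(J)$: $fa^{\ell}b^{s}\in\sigma(J)$ is the \emph{and}-condition and $fa^{s}b^{\ell}\in\sigma(J)$ is the \emph{or}-condition. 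Hence on each monomial of the orbit, $J\cap\sigma(J)$ selects exactly ``$fa^{\ell}b^{s}\in I$ and $fa^{s}b^{\ell}\in I$'', which is the condition for membership in $I\cap\sigma(I)$, while $J+\sigma(J)$ selects ``$fa^{\ell}b^{s}\in I$ or $fa^{s}b^{\ell}\in I$'', which is the condition for membership in $I+\sigma(I)$.

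These two cases exhaust all monomials, yielding both equalities. The argument is essentially a Boolean bookkeeping exercise, and no step presents a real obstacle: the point is precisely that the definition of $\Shiftme_{a,b}$ is engineered so that, on each two-element $\sigma$-orbit, $J$ and $\sigma(J)$ take the \emph{or} and \emph{and} of the $I$-memberships in complementary ways, making their intersection and sum symmetric in $I$ and $\sigma(I)$.
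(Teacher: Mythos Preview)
Your proof is correct and follows essentially the same approach as the paper. The paper's argument is simply a more compressed version of yours: rather than splitting into $\sigma$-fixed monomials and two-element orbits, it observes directly that for any $p,q$ one has $fa^{p}b^{q}\in I$ and $fa^{q}b^{p}\in I$ if and only if both lie in $J$ (and likewise with ``or''), which is exactly the Boolean bookkeeping you carry out case by case.
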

\begin{proof}
Observe that, for any integers $p$ and $q$, we have $fa^{p}b^{q}\in I$
and $fa^{q}b^{p}\in I$ if and only if $fa^{p}b^{q}\in J$ and
$fa^{q}b^{p}\in J$. It follows that $I\cap \sigma(I)=J\cap \sigma(J)$.
Similarly,  $fa^{p}b^{q}\in I$
or $fa^{q}b^{p}\in I$ if and only if $fa^{p}b^{q}\in J$ or
$fa^{q}b^{p}\in J$. It follows that $I+\sigma(I)=J+\sigma(J)$.
%
%
\end{proof}

\begin{lemma}\label{shadowlemma} 
Let $J=\shiftme_{a,b}(I)$, and let $m$ be a monomial
  fixed by $\sigma$.  Then
  $\Shadow_{m}(J)=\shiftme_{a,b}(\Shadow_{m}(I))$.  
\end{lemma}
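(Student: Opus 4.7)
The plan is to verify the claimed equality by testing membership of each squarefree monomial of $\bbbk[\supp(m)]$ in both sides. Since $m$ is fixed by $\sigma$, either $a,b \notin \supp(m)$, or $a$ and $b$ appear in $m$ with the same positive exponent; in either case we may write $m = f a^k b^k$ for some $k \geq 0$ and some monomial $f$ not involving $a$ or $b$. Then every squarefree monomial $\nu \in \bbbk[\supp(m)]$ has the form $\nu = g a^{\epsilon_a} b^{\epsilon_b}$ with $g$ a squarefree monomial in the variables of $\supp(f)$ and $\epsilon_a,\epsilon_b \in \{0,1\}$ (forced to be $0$ when $k=0$). Writing $h = g\cdot f/\sqrt{f}$, we get
\[
\nu \cdot \frac{m}{\sqrt{m}} = h \, a^{k-1+\epsilon_a}\, b^{k-1+\epsilon_b},
\]
and by definition $\nu \in \Shadow_m(J)$ iff this monomial lies in $J$ (and analogously for $I$).

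The key observation is that, as $(\epsilon_a,\epsilon_b)$ ranges over the four possibilities, the $(a,b)$-exponents of $\nu\cdot m/\sqrt{m}$ are $(k-1,k-1)$, $(k,k-1)$, $(k-1,k)$, $(k,k)$: precisely the four patterns $ha^sb^s$, $ha^\ell b^s$, $ha^s b^\ell$, $ha^s b^s$ that appear in the definition of $\shiftme_{a,b}$. So I plan to unpack each case and match it directly with the corresponding line in the definition of $J$: the $\sigma$-fixed cases $(0,0)$ and $(1,1)$ give monomials of the form $ha^s b^s$, which lie in $J$ iff they lie in $I$; the case $(\epsilon_a,\epsilon_b)=(1,0)$ gives $ha^\ell b^s$ with $\ell > s$, which lies in $J$ iff $ha^\ell b^s \in I$ or $ha^s b^\ell \in I$, i.e., iff $\nu \in \Shadow_m(I)$ or $\sigma(\nu) \in \Shadow_m(I)$; and the case $(0,1)$ requires both to hold.

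Comparing this with the definition of $\shiftme_{a,b}$ applied to the squarefree ideal $\Shadow_m(I) \subset \bbbk[\supp(m)]$, the four conditions for $\nu \in \Shadow_m(J)$ agree on the nose with the four conditions for $\nu \in \shiftme_{a,b}(\Shadow_m(I))$, yielding the equality. The degenerate case $k=0$ collapses to the trivial observation that monomials not involving $a$ or $b$ belong to $I$ iff they belong to $J$, so both sides reduce to $\Shadow_m(I)$ (and $\shiftme_{a,b}$ acts as the identity on ideals without $a$ or $b$). I do not foresee a serious obstacle: the argument is a mechanical bookkeeping exercise, and the only point requiring care is ensuring that $m$ being $\sigma$-fixed is exactly what forces $m/\sqrt{m}$ to contribute equal powers $a^{k-1}b^{k-1}$, so that multiplication by $m/\sqrt{m}$ transports the $(\epsilon_a,\epsilon_b)$ pattern of $\nu$ faithfully onto the $(\ell,s)$ pattern controlling the shift.
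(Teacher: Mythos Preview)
Your proposal is correct and follows essentially the same approach as the paper's proof: both arguments write $m=fa^{s}b^{s}$, observe that $\nu\in\Shadow_{m}(J)$ iff $\nu\cdot m/\sqrt{m}\in J$, split into the four cases according to which of $a,b$ divide $\nu$, and match each case against the defining clauses of $\shiftme_{a,b}$. The paper writes out only the case $\nu=f'a$ and declares the others similar, whereas you treat all four and also handle the degenerate case $k=0$ explicitly, but there is no substantive difference in strategy.
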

\begin{proof}
Write $m=fa^{s}b^{s}$, and write $f=g\sqrt{f}$ for some monomial $g$.
For a squarefree monomial $\mu$ dividing $\sqrt{m}$, we have $\mu\in
\Shadow_{m}(J)$ if and only if $\mu\frac{m}{\sqrt{m}}=\mu
ga^{s-1}b^{s-1}\in J$, and similarly 
for $I$.  We write $\mu=f'$, $\mu=f'a$, $\mu=f'b$, or $\mu=f'ab$ with
$f'$ not divisible by $a$ or $b$.  

We consider the case $\mu=f'a$ (the
other three cases are similar).  
%
In this case, we have $\mu=f'a\in\Shadow_{m}(J)$ if and only if   $f'ga^{s}b^{s-1}\in
J$, if and only if $f'ga^{s}b^{s-1}\in I$  or $f'ga^{s-1}b^{s}\in I$,
if and only if 
$f'a\in \Shadow_{m}(I)$  or  $f'b\in
  \Shadow_{m}(I)$, if and only if 
$\mu=f'a\in\shiftme_{a,b}(\Shadow_{m}(I))$. 
%
%
\end{proof}



Theorem \ref{HM} lets us compare the multigraded Betti numbers of $I$
and $J$, in 
multidegrees fixed by $\sigma$:

\begin{corollary}\label{fixedmulti}
Let $J=\shiftme_{a,b}(I)$, and let $m$ be a multidegree fixed by
$\sigma$.  Then for all $i$, one has $b_{i,m}(J)\geq b_{i,m}(I)$.  
\end{corollary}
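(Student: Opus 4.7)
Since $m$ is fixed by $\sigma$, its $a$- and $b$-exponents coincide, so $m = fa^{s}b^{s}$ for some monomial $f$ divisible by neither $a$ nor $b$ and some integer $s \geq 0$. The plan is to use Theorem \ref{keylemma} to transfer the comparison to multigraded Betti numbers at the squarefree multidegree $\sqrt{m}$ of the squarefree shadow ideals, and then invoke the Murai--Hibi theorem (Theorem \ref{HM}).

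By Theorem \ref{keylemma}(iv), we have
\[ b_{i,m}(I) = b_{i,\sqrt{m}}(\Shadow_m(I)) \quad \text{and} \quad b_{i,m}(J) = b_{i,\sqrt{m}}(\Shadow_m(J)). \]
By Lemma \ref{shadowlemma}, $\Shadow_m(J) = \shiftme_{a,b}(\Shadow_m(I))$. Since $\Shadow_m(I)$ is squarefree, a direct inspection of the two definitions shows that $\shiftme_{a,b}$ reduces to the classical combinatorial shift $\shift_{a,b}$ on squarefree ideals (the only exponent patterns that occur are $s = \ell = 0$, $s = \ell = 1$, and $(s,\ell) = (0,1)$). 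Hence it suffices to prove
\[ b_{i,\sqrt{m}}\bigl(\shift_{a,b}(\Shadow_m(I))\bigr) \geq b_{i,\sqrt{m}}(\Shadow_m(I)). \]

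The main obstacle is that Theorem \ref{HM} is stated for graded Betti numbers, while we need an inequality at the specific multidegree $\sqrt{m}$. I would bridge this gap by revisiting the proof of Theorem \ref{HM} given in the excerpt and observing that the automorphism $\phi(b) = a-b$, the operator $\init_{\mathrm{rev}}$, and the polarization step all respect the coarser $\mathbb{Z}^{n-1}$-grading on $S$ obtained by summing the exponents of $a$ and $b$ into a single component. Because $m$ is $\sigma$-fixed, the $a$- and $b$-exponents in $\sqrt{m}$ are either both $0$ (when $s = 0$) or both $1$ (when $s \geq 1$); in both cases no other squarefree multidegree lies over the same coarser multidegree. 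For the squarefree ideal $\Shadow_m(I)$, whose multigraded Betti numbers vanish outside squarefree multidegrees, the coarser multigraded Betti number at the image of $\sqrt{m}$ therefore coincides with $b_{i,\sqrt{m}}$, and the Murai--Hibi inequality refines to the multigraded statement we need. Combining with the reduction above yields $b_{i,m}(J) \geq b_{i,m}(I)$.
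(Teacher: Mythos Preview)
Your argument is correct, and the overall strategy---reduce to the squarefree shadows via Theorem~\ref{keylemma}, apply Lemma~\ref{shadowlemma}, and then invoke Theorem~\ref{HM}---is exactly the paper's. The only difference is in how you bridge the gap between the \emph{graded} statement of Theorem~\ref{HM} and the \emph{multigraded} inequality you need at $\sqrt{m}$, and here you are working much harder than necessary.

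The paper's observation is simply this: the shadow ideals $\Shadow_m(I)$ and $\Shadow_m(J)$ live in $\bbbk[\supp(m)]$, and in that ring $\sqrt{m}$ is the product of \emph{all} the variables, hence the unique squarefree monomial of degree $|\supp(m)|$. Since the multigraded Betti numbers of a squarefree ideal vanish outside squarefree multidegrees, one has
\[
b_{i,|\supp(m)|}\bigl(\Shadow_m(I)\bigr)=b_{i,\sqrt{m}}\bigl(\Shadow_m(I)\bigr),
\]
and likewise for $J$. Now Theorem~\ref{HM}, applied verbatim in degree $j=|\supp(m)|$, gives the desired inequality immediately---no refined $\mathbb{Z}^{n-1}$-grading, no re-reading of the proof of Theorem~\ref{HM}, no tracking of homogeneity through polarization. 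Your coarser-grading argument is valid (once one checks that the polarization variables inherit the combined $a$--$b$ degree and that the initial-ideal and polarization steps respect this grading), but it reproves something that falls out for free from the standard $\mathbb{Z}$-grading once you remember which ring the shadows live in.
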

\begin{proof}
Let $I'=\Shadow_{m}(I)$ and $J'=\Shadow_{m}(J)$.  By Lemma
\ref{shadowlemma}, we have $J'=\shiftme_{a,b}(I')$.  Since $I'$ and $J'$ are
squarefree ideals of $\bbbk[\supp(m)]$, their Betti numbers are
concentrated in squarefree multidegrees.  Thus, in particular,
$b_{i,|\supp(m)|}(I')=b_{i,\sqrt{m}}(I')$ (and likewise for $J'$)
since $\sqrt{m}$ is the only squarefree monomial of degree
$|\supp(m)|$ in this ring.  By Theorem \ref{HM} we have
$b_{i,|\supp(m)|}(J')\geq b_{i,|\supp(m)|}(I')$, and by Theorem
\ref{keylemma} we have $b_{i,m}(I)=b_{i,\sqrt{m}}(I')$ and
$b_{i,m}(J)=b_{i,\sqrt{m}}(J')$.  Putting all this together, we get
$b_{i,m}(J)\geq
b_{i,m}(I)$
 as desired.
\end{proof}

Now we consider multidegrees not fixed by $\sigma$.  
The Mayer-Vietoris sequence,
\begin{equation*}
0\to I\cap \sigma(I)\to I\bigoplus \sigma(I)\to
I+\sigma(I)\to 0,
\end{equation*}
gives rise to a long exact sequence in Tor:
\begin{multline*}
\cdots\to \Tor_{i}\left(\bbbk,{I\cap \sigma(I)}\right) \to
\Tor_{i}\left(\bbbk,{I}\right)\bigoplus
\Tor_{i}\left(\bbbk,{\sigma(I)}\right) \to\\
\Tor_{i}\left(\bbbk,{I+\sigma(I)}\right)\to \allowbreak\Tor_{i-1}\left(\bbbk,
{I\cap \sigma(I)}\right) \to 
\cdots
\end{multline*}

We truncate and restrict to multidegree $m$, producing the exact
sequence of vector spaces:
\begin{multline*}
0\to \left(\ker \Delta_{i,I}\right)_{m}\to
\Tor_{i}\left(\bbbk,{I\cap 
\sigma(I)}\right)_{m}\xrightarrow{\Delta_{i,I}}
\Tor_{i}\left(\bbbk,{I}\right)_{m}\bigoplus 
\Tor_{i}\left(\bbbk,{\sigma(I)}\right)_{m}\\
\to \Tor_{i}\left(\bbbk,{I+\sigma(I)}\right)_{m}\to \left(\ker
\Delta_{i-1,I}\right)_{m}\to 0  
\end{multline*}

\begin{proposition}\label{mayervietoriskernel}
Suppose that $I$ is $(a,b)$-shifted and that $m\neq \sigma(m)$.  Then
$\left(\ker 
\Delta_{i,I}\right)_{m}=0$ for all $i$.
\end{proposition}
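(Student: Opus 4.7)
The plan is to compute the multigraded pieces of the Mayer--Vietoris short exact sequence in multidegree $m$ via the Koszul complex (using the Construction before Theorem \ref{keylemma}), and to show that $(a,b)$-shiftedness forces this short exact sequence of chain complexes to split when $m\neq \sigma(m)$. Since the map $\Delta_{i,I}$ is induced by a split injection of complexes, its kernel in multidegree $m$ vanishes.

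Without loss of generality, write $m=fa^\ell b^s$ with $\ell>s\geq 0$, where $f$ is not divisible by $a$ or $b$; the other case follows by applying $\sigma$. Recall that $(\mathbb{K}\tensor M)_m$ has a $\bbbk$-basis consisting of symbols $ue_\mu$ where $u\in M$ is a monomial, $\mu$ is squarefree, and $u\mu=m$. Any such $u$ is of the form $u=f'a^p b^q$ where $f'$ is a divisor of $f$ whose complementary factor is squarefree, $p\in\{\ell,\ell-1\}$, and $q\in\{s,s-1\}$ (omitting $s-1$ if $s=0$). Since $\ell>s$, in every case $p\geq q$, i.e., every such $u$ is either symmetric ($p=q$, which requires $\ell=s+1$, $p=q=s$) or $a$-heavy ($p>q$). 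This is the geometric observation that drives the argument.

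The key reduction uses $(a,b)$-shiftedness: if $p>q$ then $\sigma(u)=f'a^qb^p$ is $b$-heavy, so $\sigma(u)\in I$ (i.e., $u\in\sigma(I)$) implies $u=f'a^pb^q\in I$; if $p=q$, then $u=\sigma(u)$ and the two conditions are trivially equivalent. In either case, the set of basis symbols for $(\mathbb{K}\tensor\sigma(I))_m$ is contained in that of $(\mathbb{K}\tensor I)_m$, and intersecting with $I$ adds nothing. Hence, as subcomplexes of $\mathbb{K}\tensor S$ in multidegree $m$, one has the equalities
\[
(\mathbb{K}\tensor(I\cap\sigma(I)))_m \;=\; (\mathbb{K}\tensor\sigma(I))_m
\quad\text{and}\quad
(\mathbb{K}\tensor(I+\sigma(I)))_m \;=\; (\mathbb{K}\tensor I)_m.
\]

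Setting $C=(\mathbb{K}\tensor\sigma(I))_m$ and $D=(\mathbb{K}\tensor I)_m$, the Mayer--Vietoris short exact sequence of complexes in multidegree $m$ becomes
\[
0\to C \xrightarrow{\ x\mapsto(\iota(x),x)\ } D\oplus C \to D\to 0,
\]
where $\iota\colon C\hookrightarrow D$ is inclusion (these are the same complex since $\sigma(I)_m \subseteq I_m$). Projection onto the second coordinate is a chain-map left inverse to the first arrow, so the sequence splits and the induced map on homology $\Delta_{i,I}\colon \Tor_i(\bbbk,I\cap\sigma(I))_m\to\Tor_i(\bbbk,I)_m\oplus\Tor_i(\bbbk,\sigma(I))_m$ is injective, giving $(\ker\Delta_{i,I})_m=0$. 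The main obstacle is purely bookkeeping, namely verifying the boundary cases $s=0$ and $\ell=s+1$ (where symmetric $u$'s actually occur) so that the identification of subcomplexes in the third paragraph is genuinely an equality and not merely an inclusion.
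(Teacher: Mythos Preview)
Your argument is correct and rests on the same key observation as the paper's proof: for a monomial $u$ dividing $m$ with squarefree cofactor, $(a,b)$-shiftedness forces membership in one of $I$, $\sigma(I)$ to imply membership in the other. The paper works in the $b$-heavy multidegree $m=fa^{s}b^{\ell}$ and chases an explicit cycle: given a boundary preimage in $K_{i+1}\tensor I$, it checks that each coefficient $h_{j}$ also lies in $\sigma(I)$, so the cycle already bounds in $K_{\bullet}\tensor(I\cap\sigma(I))$. You instead work in the $a$-heavy multidegree and package the same observation as the equality of subcomplexes $(\mathbb{K}\tensor(I\cap\sigma(I)))_{m}=(\mathbb{K}\tensor\sigma(I))_{m}$, from which the chain-level map into the direct sum is split injective via projection to the second summand. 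This is a cleaner formulation of the same argument: rather than lifting one boundary at a time, you observe that the inclusion $I\cap\sigma(I)\hookrightarrow\sigma(I)$ is an isomorphism on the Koszul complex in multidegree $m$, so the induced map on $\Tor$ is already an isomorphism onto that summand.
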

\begin{proof}
Suppose $m$ has the form $fa^{s}b^{\ell}$.  (The case
$m=fa^{\ell}b^{s}$ is symmetric.)  Let $g\in \left(\ker\Delta_{i,I}\right)_{m}$ be
given, and write $g=[\sum \alpha_{j}\gamma_{j}e_{\mu_{j}}]$ for some
$\alpha_{j}\in \bbbk$, monomials
$\gamma_{j}\in I\cap \sigma(I)$, and squarefree monomials $\mu_{j}$ of
degree $i$ such that
$\gamma_{j}\mu_{j}=m$ for all $j$.  (The term $\sum
\alpha_{j}\gamma_{j}e_{\mu_{j}}$ is an 
element of ${K}_{i}\tensor (I\cap\sigma(I))$; the brackets denote
its class 
modulo the boundary in the Koszul complex.)  We will show that $g=0$
in $\Tor_{i}(\bbbk,I\cap\sigma(I))$.

We have $\Delta_{i,I}(g)=([g],[g])=(0,0)$ by assumption, so, in
particular, $\sum \alpha_{j}\gamma_{j}e_{\mu_{j}}$ is a boundary in
${K}_{i}\tensor I$.  Thus, we may write $\sum
\alpha_{j}\gamma_{j}e_{\mu_{j}}=D(\sum \beta_{j} h_{j}e_{\nu_{j}})$,
for some coefficients $\beta_{j}\in \bbbk$, monomials $h_{j}\in I$ and, 
$\nu_{j}$ squarefree of degree $i+1$ with $h_{j}\nu_{j}=m$ for all
$j$.

We claim that $h_{j}\in I\cap \sigma(I)$.  Indeed, $h_{j}$ has the
form $f'a^{s-\varepsilon_{a}}b^{\ell-\varepsilon_{b}}$, where $\varepsilon_{a}=0$
if $a$ does not divide $\nu_{j}$ and $1$ if it does, and likewise for
$\varepsilon_{b}$.  Since $\ell\gneqq s$, we have $\ell-\varepsilon_{b}\geq
s-\varepsilon_{a}$, so, since $I$ is shifted,
$f'a^{s-\varepsilon_{a}}b^{\ell-\varepsilon_{b}}\in I \Rightarrow
f'a^{\ell-\varepsilon_{b}}b^{s-\varepsilon_{a}}\in I$.  Thus,
$h_{j}=\sigma(f'a^{\ell-\varepsilon_{b}}b^{s-\varepsilon_{a}})\in \sigma(I)$ as
claimed.  

Hence, $\sum \beta_{j}h_{j}e_{\nu_{j}}\in {K_{i+1}}\tensor (I\cap \sigma(I))$,
so we have $[g]=[D(\sum \beta_{j}h_{j}e_{\nu_{j}})]=0$ in $\Tor_{i}(\bbbk,I\cap\sigma(I))$.  
\end{proof}

\begin{corollary}\label{unfixedmulti}
Let $J=\shiftme_{a,b}(I)$, and let $m$ be a multidegree not fixed by
$\sigma$.  Then for all $i$, one has
$b_{i,m}(J)+b_{i,\sigma(m)}(J)\geq b_{i,m}(I)+b_{i,\sigma(m)}(I)$.  
\end{corollary}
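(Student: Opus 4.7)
The plan is to compare the six-term Mayer--Vietoris exact sequences displayed above for $I$ and $J$ in multidegree $m$, exploiting that their outer terms coincide (by Proposition \ref{symmetry}) while the kernel terms vanish for $J$ but not necessarily for $I$.

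First, by Proposition \ref{shiftmacaulay}(ii) the ideal $J$ is $(a,b)$-shifted. Since $m\neq \sigma(m)$ by hypothesis, Proposition \ref{mayervietoriskernel} applied to $J$ yields $(\ker \Delta_{i,J})_m = 0$ for every $i$. Consequently the six-term sequence for $J$ in multidegree $m$ collapses to the short exact sequence
\[
0 \to \Tor_i(\bbbk, J\cap\sigma(J))_m \to \Tor_i(\bbbk, J)_m \oplus \Tor_i(\bbbk, \sigma(J))_m \to \Tor_i(\bbbk, J+\sigma(J))_m \to 0.
\]

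Using $b_{i,m}(\sigma(J)) = b_{i,\sigma(m)}(J)$ (from the $\sigma$-equivariance noted before Proposition \ref{symmetry}) and setting
\[
d_i = \dim_{\bbbk}\Tor_i(\bbbk, I\cap\sigma(I))_m + \dim_{\bbbk}\Tor_i(\bbbk, I+\sigma(I))_m,
\]
the alternating-sum identity for the $J$-sequence gives
\[
b_{i,m}(J) + b_{i,\sigma(m)}(J) = d_i,
\]
where we have invoked Proposition \ref{symmetry} to replace $J\cap\sigma(J)$ by $I\cap\sigma(I)$ and $J+\sigma(J)$ by $I+\sigma(I)$. The same Euler-characteristic computation applied to the (generally longer) six-term sequence for $I$ produces
\[
b_{i,m}(I) + b_{i,\sigma(m)}(I) = d_i - \dim_{\bbbk}(\ker \Delta_{i,I})_m - \dim_{\bbbk}(\ker \Delta_{i-1,I})_m.
\]

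Comparing the two identities and noting that the two subtracted kernel dimensions are non-negative yields $b_{i,m}(J) + b_{i,\sigma(m)}(J) \geq b_{i,m}(I) + b_{i,\sigma(m)}(I)$, as desired. The substantive input is Proposition \ref{mayervietoriskernel}, which is already in hand; beyond that, the argument is a routine bookkeeping of dimensions on a six-term exact sequence, with Proposition \ref{symmetry} ensuring that the ``outer'' dimensions are exactly the same for $I$ and $J$.
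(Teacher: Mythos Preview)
Your proof is correct and follows essentially the same route as the paper's: both compute the alternating sum of dimensions in the six-term Mayer--Vietoris sequence, use Proposition \ref{symmetry} to identify the intersection and sum terms for $I$ and $J$, and use Proposition \ref{mayervietoriskernel} (applied to the shifted ideal $J$) to kill the kernel terms on the $J$ side. Your explicit remark that $J$ is $(a,b)$-shifted via Proposition \ref{shiftmacaulay}(ii) is a detail the paper leaves implicit.
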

\begin{proof}
From the Mayer-Vietoris sequence, we have 
\begin{align*}
b_{i,m}\left({I}\right)+b_{i,\sigma(m)}\left({I}\right) &=
b_{i,m}\left({I+\sigma(I)}\right) + b_{i,m}\left({I\cap
\sigma(I)}\right)\\
& \quad - \dim_{\bbbk} \left(\ker \Delta_{i,I}\right)_{m} -
\dim_{\bbbk} \left(\ker 
\Delta_{i-1,I}\right)_{m}\\[8pt] 
&= b_{i,m}\left({J+\sigma(J)}\right) + b_{i,m}\left({J\cap
\sigma(J)}\right)\\
& \quad - \dim_{\bbbk} \left(\ker \Delta_{i,I}\right)_{m} -
\dim_{\bbbk} \left(\ker 
\Delta_{i-1,I}\right)_{m}\\[8pt] 
&\leq   b_{i,m}\left({J+\sigma(J)}\right) +
b_{i,m}\left({J\cap \sigma(J)}\right)\\[8pt] 
&= b_{i,m}\left({J+\sigma(J)}\right) + b_{i,m}\left({J\cap
\sigma(J)}\right)\\
&\quad - \dim_{\bbbk} \left(\ker \Delta_{i,J}\right)_{m} -
\dim_{\bbbk} \left(\ker 
\Delta_{i-1,J}\right)_{m}\\[8pt] 
&=b_{i,m}\left({J}\right)+b_{i,\sigma(m)}\left({J}\right),
\end{align*}
the second equality by Proposition \ref{symmetry}, and the fourth by
Proposition \ref{mayervietoriskernel}.
\end{proof}

Corollaries \ref{fixedmulti} and  \ref{unfixedmulti} combine
to prove Theorem \ref{strongshiftbetti} in the case that $t=0$:

\begin{theorem}\label{shiftbetti}
Let $J=\shiftme_{a,b}(I)$.  Then for all $i,j$ one has
  $b_{i,j}(J)\geq b_{i,j}(I)$.  
\end{theorem}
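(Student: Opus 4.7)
The plan is to deduce Theorem \ref{shiftbetti} from Corollaries \ref{fixedmulti} and \ref{unfixedmulti} by a straightforward aggregation from multigraded to graded Betti numbers. First I would recall that, since $I$ and $J$ are both monomial ideals, each graded Betti number decomposes as
\[
b_{i,j}(I)=\sum_{\deg m = j} b_{i,m}(I),\qquad b_{i,j}(J)=\sum_{\deg m=j} b_{i,m}(J),
\]
the sums being over monomials $m$ of degree $j$.

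Next I would partition the set of degree $j$ monomials according to the action of $\sigma$: the involution $\sigma$ fixes exactly the monomials of the form $fa^{s}b^{s}$ and permutes the remaining monomials in orbits of size two, pairing $fa^{\ell}b^{s}$ with $fa^{s}b^{\ell}$ for $\ell \gneqq s$. Summing separately over the fixed monomials and over a set of orbit representatives $\{m,\sigma(m)\}$, the desired inequality becomes the sum of the inequalities
\[
b_{i,m}(J) \geq b_{i,m}(I) \qquad (m=\sigma(m))
\]
from Corollary \ref{fixedmulti}, and
\[
b_{i,m}(J)+b_{i,\sigma(m)}(J)\geq b_{i,m}(I)+b_{i,\sigma(m)}(I) \qquad (m\ne\sigma(m))
\]
from Corollary \ref{unfixedmulti}. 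Adding all of these termwise gives $b_{i,j}(J)\geq b_{i,j}(I)$.

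There is essentially no obstacle at this stage; the genuine content of the theorem has already been carried out in Corollaries \ref{fixedmulti} and \ref{unfixedmulti}, whose proofs relied on the shadow identity of Lemma \ref{shadowlemma}, the previously proved squarefree result Theorem \ref{HM}, and the Mayer--Vietoris argument built on Propositions \ref{symmetry} and \ref{mayervietoriskernel}. The only thing to be careful about in writing the final proof is to make sure that each orbit $\{m,\sigma(m)\}$ is counted exactly once when passing from multigraded to graded Betti numbers, which the partition above handles cleanly.
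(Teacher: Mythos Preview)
Your proposal is correct and matches the paper's own proof almost verbatim: the paper also decomposes $b_{i,j}$ into multigraded pieces, partitions the degree-$j$ monomials into those fixed by $\sigma$ and $\sigma$-orbits of size two, and then applies Corollaries \ref{fixedmulti} and \ref{unfixedmulti} to the respective sums.
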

\begin{proof}
We have
\begin{align*}
b_{i,j}(I)&=\hspace{-10pt} \sum_{\deg(m)=j}\hspace{-8pt}b_{i,m}(I)\\
&= \hspace{-10pt}\sum_{\substack{\deg(m)=j\\
    m=fa^{s}b^{s}}}\hspace{-8pt}b_{i,m}(I) 
+\hspace{-10pt}\sum_{\substack{\deg(m)=j\\ 
  m=fa^{\ell}b^{s}}}\hspace{-8pt}\big(b_{i,m}(I)+b_{i,\sigma(m)}(I)\big), 
\end{align*}
and similarly for $J$.  By Corollary \ref{fixedmulti}, the
inequality holds for the first sum, and by Corollary
\ref{unfixedmulti}, it holds for the second.
\end{proof}


The proof of Theorem \ref{strongshiftbetti} is now immediate.

\begin{proof}[Proof of Theorem \ref{strongshiftbetti}]
Let $J=\shiftme_{a,b,t}(I)$.  Then, applying Proposition
\ref{tshiftisshift}, we have
$b_{i,j}(J)=b_{i,j+t}(a^{t}J)=b_{i,j+t}(\shiftme_{a,b}(a^{t}I)) \geq
b_{i,j+t}(a^{t}I)=b_{i,j}(I)$.  
\end{proof}

In fact, this argument, combined with the proof of Theorem
\ref{shiftbetti}, proves the sharper result:

\begin{proposition}\label{sharpbetti}
Let $J=\shiftme_{a,b,t}(I)$.  Then, for all $f$, all $r<t$, and all
$s<\ell$, one has:
\begin{itemize}
\item $b_{i,fa^{s}b^{r}}(J)\geq b_{i,fa^{s}b^{r}}(I)$.
\item $b_{i,fa^{s}b^{s+t}}(J)\geq b_{i,fa^{s}b^{s+t}}(I)$.
\item $b_{i,fa^{s}b^{\ell+t}}(J)+b_{i,fa^{\ell}b^{s+t}}(J) \geq
  b_{i,fa^{s}b^{\ell+t}}(I)+b_{i,fa^{\ell}b^{s+t}}(I)$.  
\end{itemize}
\end{proposition}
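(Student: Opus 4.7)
The plan is to leverage Proposition \ref{tshiftisshift} to reduce the sharper statement to the case $t=0$, and then apply Corollaries \ref{fixedmulti} and \ref{unfixedmulti} in the appropriate multidegrees. Since multiplication by $a^t$ is a multidegree-shift isomorphism, $b_{i,m}(N)=b_{i,a^tm}(a^tN)$ for any multigraded module $N$ and multidegree $m$. Combined with Proposition \ref{tshiftisshift}, namely $a^tJ=\shiftme_{a,b}(a^tI)$, each of the three inequalities in the proposition translates into a comparison between the multigraded Betti numbers of $\shiftme_{a,b}(a^tI)$ and $a^tI$ in a translated multidegree.

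For the second bullet, the translated multidegree $fa^{s+t}b^{s+t}$ is fixed by $\sigma$, and Corollary \ref{fixedmulti} immediately yields the desired inequality. For the third bullet, since $s<\ell$, the translated multidegrees $fa^{s+t}b^{\ell+t}$ and $fa^{\ell+t}b^{s+t}$ form a $\sigma$-pair, so Corollary \ref{unfixedmulti} furnishes precisely the required sum inequality.

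For the first bullet the translated multidegree is $m=fa^{s+t}b^r$, and $\sigma(m)=fa^rb^{s+t}$ again forms a $\sigma$-pair with $m$ (since $r<t\leq s+t$), so Corollary \ref{unfixedmulti} only gives a pair-sum inequality. To extract the single-multidegree statement I will show that $b_{i,\sigma(m)}(a^tI)=b_{i,\sigma(m)}(\shiftme_{a,b}(a^tI))=0$. The monomial $\sigma(m)$ has $a$-exponent $r<t$, so it is not divisible by $a^t$; thus whenever every generator of an ideal $N$ is a multiple of $a^t$, no Koszul basis element of $\mathbb{K}\otimes N$ lives in multidegree $\sigma(m)$, and the corresponding Betti numbers vanish. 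For $N=a^tI$ this is clear, and for $N=\shiftme_{a,b}(a^tI)$ it follows by a routine inspection of the three defining types of generators (diagonal $fa^sb^s$, swap $fa^\ell b^s$, and repeat $fa^sb^\ell$): in each case, the defining membership condition forces the resulting generator to have $a$-exponent at least $t$. Granted this vanishing, Corollary \ref{unfixedmulti} collapses to the single inequality $b_{i,m}(\shiftme_{a,b}(a^tI))\geq b_{i,m}(a^tI)$, which is the first bullet.

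The main obstacle is simply verifying the vanishing claim cleanly, namely that $\shiftme_{a,b}(a^tI)\subseteq(a^t)$; once this is in hand, everything else is a direct unwinding of Proposition \ref{tshiftisshift} together with the multigraded refinements already established in Section 5.
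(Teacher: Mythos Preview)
Your proof is correct and is essentially the argument the paper has in mind: reduce to $t=0$ via Proposition~\ref{tshiftisshift}, then read off the three bullets from Corollaries~\ref{fixedmulti} and~\ref{unfixedmulti} in the appropriate (shifted) multidegrees, with the first bullet requiring the extra observation that the partner multidegree contributes nothing. One small simplification: your ``routine inspection'' verifying $\shiftme_{a,b}(a^{t}I)\subseteq(a^{t})$ is unnecessary, since Proposition~\ref{tshiftisshift} already gives $\shiftme_{a,b}(a^{t}I)=a^{t}J$, which is visibly contained in $(a^{t})$; the vanishing of both $b_{i,\sigma(m)}(a^{t}I)$ and $b_{i,\sigma(m)}(a^{t}J)$ is then immediate.
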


\section{Shifted-plus-powers ideals}
The ideal $P=(x_{1}^{e_{1}},\cdots, x_{n}^{e_{n}})$ is $(a,b)$-shifted,
and, furthermore, if $I$ is any 
monomial ideal containing $P$, then $\shiftme_{a,b}(I)$ contains $P$ as
well.  Unfortunately, this statement fails for $(a,b,t)$-shifted ideals.
The goal of this section is to fix this problem.

Let $I$ be a monomial
ideal containing $P$, and write $I=I'+P$.  We will show that, for
appropriate choices of $I'$ (namely, ``deleting'' the pure power of $b$
from a minimal generating set for $I$) and $t$, the
$t$-shifted-plus-$P$ ideal $J=\shiftme_{a,b,t}(I')+P$ has the same
Hilbert function as $I$ and satisfies $b_{i,j}(J)\geq b_{i,j}(I)$.

\begin{notation}
Throughout this section, fix  integers $\beta>1$ and $t\geq 0$.  We
denote by $I$ 
an $(a,b,t)$-shifted ideal with 
no minimal generators divisible by $b^{\beta}$, and set
$J=\shiftme_{a,b,t+1}(I)$.
  By abuse of notation, we will often write
$I+b^{\beta}$ in place of $I+(b^{\beta})$.  
\end{notation}

Our goal is to show that $J+b^{\beta}$
has the same Hilbert function as $I+b^{\beta}$, and larger graded
Betti numbers.

We break down the graded Betti numbers of $I+b^{\beta}$ and
$J+b^{\beta}$ into a sum of multigraded Betti numbers according to the
following formula.
For a monomial $m$ of the form $m=fa^{s}b^{\ell+t+1}$, set
$n=fa^{\ell}b^{s+t+1}$.  Then
\begin{equation}\label{thesummation}
\begin{array}{rcl}
 \displaystyle
    b_{i,j}(J+b^{\beta})&=&\displaystyle\hspace{-15pt}\sum_{\substack{m\neq 
    fa^{s}b^{\ell+t+1}\\m\neq fa^{\ell}b^{s+t+1}}}\hspace{-13pt}
    b_{i,m}(J+b^{\beta}) + 
\hspace{-15pt}\sum_{\substack{m=fa^{s}b^{\ell+t+1}\\ \ell+t+1\neq
  \beta}}\hspace{-13pt}\left(b_{i,m}(J+b^{\beta})+b_{i,n}(J+b^{\beta})\right)\\[12pt]
&&\quad +\displaystyle 
\hspace{-15pt}\sum_{\substack{m=fa^{s}b^{\ell+t+1}\\
    \ell+t+1=\beta}}\hspace{-13pt}
    \left(b_{i,m}(J+b^{\beta})+b_{i,n}(J+b^{\beta})\right),   
\end{array}
\end{equation}
and likewise for $I+b^{\beta}$, all sums taken over monomials $m$
with $\deg m=j$.  We will show that each of the
summands in formula (\ref{thesummation}) for $J$ is larger than or equal
to the
corresponding summand for $I$.

We begin with a technical lemma.

\begin{lemma} \label{techlemma}
  Suppose that $f$ is a monomial not divisible by $a$ or
  $b$, and that $\ell+t+1\geq \beta$.  If $fa^{s}b^{\ell+t+1}\in I$,
  then $fa^{\ell}b^{s+t+1}\in I$ as well.
\end{lemma}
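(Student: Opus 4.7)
The plan is to drop the $b$-exponent by one, invoke the $(a,b,t)$-shifted hypothesis on the resulting monomial, and then multiply back by $b$. The key observation is that the bound on $b$-degrees of minimal generators forces the monomial in question to already lie in $I$ after removing one factor of $b$.

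First I would exploit the hypothesis that no minimal generator of $I$ is divisible by $b^{\beta}$. Since $fa^{s}b^{\ell+t+1}\in I$, it is a multiple of some minimal generator $g$; writing $g=f'a^{s'}b^{q}$ with $f'$ coprime to $ab$, we must have $q\lneqq\beta$. Combined with the assumption $\ell+t+1\geq\beta$, this gives $q\leq\ell+t$, so $g$ already divides the smaller monomial $fa^{s}b^{\ell+t}$. Consequently $fa^{s}b^{\ell+t}\in I$.

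Next I would apply the $(a,b,t)$-shifted property of $I$ directly to $fa^{s}b^{\ell+t}\in I$ (valid because $s\lneqq\ell$) to conclude $fa^{\ell}b^{s+t}\in I$. Multiplying this membership by $b$ produces $fa^{\ell}b^{s+t+1}\in I$, which is precisely the desired conclusion.

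The argument is quite short, and I do not expect any substantive obstacle; the only mildly subtle point is the use of the minimal-generator condition to peel off one factor of $b$, and once that reduction is made the defining property of $(a,b,t)$-shiftedness delivers the rest.
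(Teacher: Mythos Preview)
Your proof is correct and is essentially identical to the paper's own argument: drop one factor of $b$ using the bound on the $b$-degree of minimal generators, apply the $(a,b,t)$-shifted hypothesis, and multiply back by $b$. The paper states the first step more tersely (``Since $I$ has no minimal generators divisible by $b^{\beta}$, we have $fa^{s}b^{\ell+t}\in I$''), but your explicit unpacking via a minimal generator $g$ is the same reasoning spelled out.
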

\begin{proof}
  Since $I$ has no minimal generators
  divisible by $b^{\beta}$, we have
  $fa^{s}b^{\ell+t}\in I$.  Since $I$ is
  $(a,b,t)$-shifted, it follows that $fa^{\ell}b^{s+t}\in I$, so
  $fa^{\ell}b^{s+t+1}\in I$ as well.
\end{proof}

\begin{corollary}\label{samecolon}
$I\cap (b^{\beta})=J\cap(b^{\beta})$ and $(I:b^{\beta})=(J:b^{\beta})$.
\end{corollary}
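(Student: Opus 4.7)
The plan is to reduce both equalities to a single combinatorial claim: as sets of monomials, $I$ and $J$ contain exactly the same monomials divisible by $b^{\beta}$. The first equality $I\cap(b^{\beta})=J\cap(b^{\beta})$ is then this claim verbatim. For the colon equality, I observe that $m\in(I:b^{\beta})$ iff $mb^{\beta}\in I$, and since $mb^{\beta}$ is automatically divisible by $b^{\beta}$ this is equivalent to $mb^{\beta}\in I\cap(b^{\beta})$, which by the first equality is the same as $mb^{\beta}\in J\cap(b^{\beta})$, i.e., $m\in(J:b^{\beta})$.

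To verify the monomial claim, I will walk through the four cases in the definition of $J=\shiftme_{a,b,t+1}(I)$. Two of the cases are immediate: monomials of the form $fa^{s}b^{r}$ with $r<t+1$, and diagonal monomials $fa^{s}b^{s+t+1}$, lie in $J$ precisely when they lie in $I$, so no work is needed for these.

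The substantive cases are the pairs $fa^{\ell}b^{s+t+1}$ and $fa^{s}b^{\ell+t+1}$ with $s<\ell$, where membership in $J$ is determined by an OR (respectively AND) of the $I$-memberships of the two partner monomials. This is precisely where Lemma \ref{techlemma} is decisive: if the monomial under consideration is divisible by $b^{\beta}$, then its $b$-exponent is at least $\beta$, so the larger partner $fa^{s}b^{\ell+t+1}$ has $b$-exponent $\ell+t+1\geq\beta$. The lemma then forces $fa^{s}b^{\ell+t+1}\in I$ to imply $fa^{\ell}b^{s+t+1}\in I$. Consequently, among monomials divisible by $b^{\beta}$, the defining OR collapses to the single condition $fa^{\ell}b^{s+t+1}\in I$, and the defining AND collapses to $fa^{s}b^{\ell+t+1}\in I$. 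In both situations, $J$-membership matches $I$-membership.

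There is no real obstacle here; all the content is packaged in Lemma \ref{techlemma}. The only care required is to note that the four-case partition exhausts all monomials and to verify that the lemma's hypothesis $\ell+t+1\geq\beta$ is automatic once one restricts to monomials divisible by $b^{\beta}$ in each interesting case. Both are routine.
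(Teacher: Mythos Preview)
Your proof is correct and follows essentially the same approach as the paper: both go through the four cases in the definition of $\shiftme_{a,b,t+1}$, note that the diagonal and small-$b$-exponent cases are immediate, and use Lemma~\ref{techlemma} in the two paired cases to collapse the OR/AND conditions to simple $I$-membership. Your explicit derivation of the colon equality from the intersection equality is a slight elaboration of what the paper leaves as a one-line remark, but the argument is the same.
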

\begin{proof}
Let $m$ be any monomial divisible by $b^{\beta}$, and write $m$ as
$fa^{s}b^{\ell+t+1}$, $fa^{\ell}b^{s+t+1}$, 
$fa^{s}b^{s+t+1}$, or $fa^{s}b^{r}$ with $r\lneqq t+1$, as appropriate.

First, if $m=fa^{s}b^{\ell+t+1}$, then $m\in J$ if and only if $m\in I$ and
$fa^{\ell}b^{s+t+1}\in I$, if and only if (by Lemma \ref{techlemma})
$m\in I$.
Similarly, if $m=fa^{\ell}b^{s+t+1}$, then $m\in J$ if and only if $m\in I$ or
$fa^{s}b^{\ell+t+1}\in I$, if and only if (by Lemma \ref{techlemma})
$m\in I$.
Finally, if $m=fa^{s}b^{s+t+1}$ or $fa^{s}b^{r}$, then $m\in J$ if and only
if $m\in I$.

Thus, $I\cap (b^{\beta})=J\cap (b^{\beta})$, so
$(I:b^{\beta})=(J:b^{\beta})$ as desired.
\end{proof}

This corollary has several important corollaries of its own.

\begin{corollary}\label{samehilb}
$\Hilb(I+b^{\beta})=\Hilb(J+b^{\beta})$.
\end{corollary}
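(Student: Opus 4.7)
The plan is short because the two preceding results do almost all of the work. From the short exact sequence
\[
0 \to I \cap (b^{\beta}) \to I \oplus (b^{\beta}) \to I + (b^{\beta}) \to 0,
\]
taking Hilbert functions (which are additive on short exact sequences of graded modules) gives
\[
\Hilb(I+b^{\beta}) = \Hilb(I) + \Hilb((b^{\beta})) - \Hilb(I \cap (b^{\beta})),
\]
and the analogous identity holds with $J$ in place of $I$.

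Now I would invoke the two inputs we already have. By Proposition \ref{shiftmacaulay}(iii), $\Hilb(J) = \Hilb(I)$, since $J = \shiftme_{a,b,t+1}(I)$. By Corollary \ref{samecolon}, $J \cap (b^{\beta}) = I \cap (b^{\beta})$, so these two ideals certainly have the same Hilbert function. Plugging these equalities into the two displayed formulas above, the three terms on the right-hand side match term by term, and we conclude $\Hilb(I+b^{\beta}) = \Hilb(J+b^{\beta})$.

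There is no real obstacle; the content of the corollary is entirely absorbed into Corollary \ref{samecolon} (which already records the key fact that shifting by $t+1$ does not disturb the part of the ideal divisible by $b^{\beta}$, since the minimal generators of $I$ avoid $b^{\beta}$). Once one has that equality of intersections and the general fact that shifting preserves Hilbert functions, adding the ideal $(b^{\beta})$ on both sides preserves Hilbert functions by inclusion--exclusion as above.
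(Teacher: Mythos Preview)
Your proof is correct and is exactly the argument the paper has in mind: the paper states this as an immediate corollary of Corollary~\ref{samecolon} without giving a proof, and the intended justification is precisely inclusion--exclusion on Hilbert functions together with $\Hilb(J)=\Hilb(I)$ from Proposition~\ref{shiftmacaulay}(iii).
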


\begin{corollary}
None of the minimal monomial generators of $J$ is divisible by
$b^{\beta+1}$.
\end{corollary}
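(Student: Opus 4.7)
My plan is to derive a contradiction by assuming some minimal monomial generator $m$ of $J$ is divisible by $b^{\beta+1}$ and then producing a proper divisor of $m$ that still lies in $J$. The essential input is Corollary \ref{samecolon}, which identifies $J\cap (b^\beta)$ with $I\cap (b^\beta)$, and the hypothesis that no minimal generator of $I$ is divisible by $b^\beta$.

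First, assuming $m$ is such a minimal generator of $J$, I would observe that $m\in J\cap(b^\beta)=I\cap(b^\beta)$, so in particular $m\in I$. Then I would pick a minimal monomial generator $n$ of $I$ dividing $m$. By hypothesis $n$ is not divisible by $b^\beta$, so the $b$-exponent of $n$ is at most $\beta-1$, whereas the $b$-exponent of $m$ is at least $\beta+1$. Writing $m=n\cdot u$, this forces $b\mid u$, hence $n$ still divides $m/b$; in particular $m/b\in I$.

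Next, I would use the fact that $m/b$ is divisible by $b^\beta$ (since $b^{\beta+1}\mid m$), to conclude $m/b\in I\cap (b^\beta)=J\cap(b^\beta)\subseteq J$, again by Corollary \ref{samecolon}. But this contradicts the minimality of $m$ as a generator of $J$, completing the argument.

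I do not expect any real obstacle here; the only subtlety is keeping track of where the $b$'s come from when dividing $m$ by its minimal generator in $I$, which is resolved by noting that $n$ has strictly less $b$-content than $m$. Every other step is a direct invocation of Corollary \ref{samecolon} together with the standing hypothesis on the generators of $I$.
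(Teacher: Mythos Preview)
Your proof is correct. The paper omits the proof of this corollary entirely, treating it as immediate from Corollary~\ref{samecolon}; your argument is precisely the natural way to supply the details, and matches the intended approach.
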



From the short exact sequence
\begin{equation*}
0\to
\frac{S}{(I:b^{\beta})}(b^{-\beta})\xrightarrow{b^{\beta}}\frac{S}{I}\to
\frac{S}{I+b^{\beta}}\to 0 
\end{equation*}
there arises a long exact sequence in Tor, (the ``mapping cone'',)
\begin{multline*}
0\to \im(b^{\beta}_{*,i,I})\to \tor_{i}\left(\bbbk,\frac{S}{I}\right)\to
\tor_{i}\left(\bbbk,\frac{S}{I+b^{\beta}}\right)\to\\
\tor_{i-1}\left(\bbbk,\frac{S}{(I:b^{\beta})}\right)(b^{-\beta})\to
\im(b^{\beta}_{*,i-1,I})\to 
0,
\end{multline*}
 and similarly for $J$
\begin{multline*}
0\to \im(b^{\beta}_{*,i,J})\to \tor_{i}\left(\bbbk,\frac{S}{J}\right)\to
\tor_{i}\left(\bbbk,\frac{S}{J+b^{\beta}}\right)\to\\
\tor_{i-1}\left(\bbbk,\frac{S}{(J:b^{\beta})}\right)(b^{-\beta})\to
\im(b^{\beta}_{*,i-1,J})\to 0.
\end{multline*}
The following proposition is immediate from mapping cone theory.

\begin{proposition}\label{manyzero}
$\im(b^{\beta}_{*,i,I})=0$ for all $i$, and
$(\im(b^{\beta}_{*,i,J}))_{m}=0$ for all $i$ and all multidegrees $m$
not equal to $fa^{s}b^{\beta}$.  
\end{proposition}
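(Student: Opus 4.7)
The plan is to deduce both statements from a standard multigraded mapping cone criterion, which I will state and then verify its hypothesis in each case. The criterion: for a monomial ideal $L$ and a monomial $g$ that is a pure power of a variable, if no minimal monomial generator of $L$ in multidegree $m$ is divisible by $g$, then the chain map lifting $\cdot g$ between the minimal free resolutions of $S/(L:g)(-\deg g)$ and $S/L$ can be chosen so that its multidegree $m$ component has all entries in the maximal ideal $\mathfrak{m}$, and hence $(g_{*,i,L})_m = 0$ for every $i$. In particular, when the hypothesis holds in every multidegree, one obtains $\im(g_{*,i,L}) = 0$.

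For the first claim, I take $L = I$ and $g = b^\beta$. The standing hypothesis of this section states that no minimal generator of $I$ is divisible by $b^\beta$, so the criterion applies in every multidegree and immediately yields $\im(b^\beta_{*,i,I}) = 0$ for all $i$. For the second claim, I take $L = J$ and $g = b^\beta$ and apply the criterion one multidegree at a time. The preceding corollary tells us that no minimal generator of $J$ is divisible by $b^{\beta+1}$, so every minimal generator of $J$ which \emph{is} divisible by $b^\beta$ must have $b$-exponent exactly $\beta$, and hence has the form $fa^s b^\beta$ with $f$ not divisible by $a$ or $b$ and $s \geq 0$. In any multidegree $m$ that is not of this form, either $b^\beta \nmid m$ (so the source of the multigraded Tor map is already zero) or $b^{\beta+1} \mid m$ (in which case $m$ is too divisible by $b$ to be a minimal generator of $J$ at all). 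Either way no minimal generator of $J$ of multidegree $m$ is divisible by $b^\beta$, and the criterion yields $(\im b^\beta_{*,i,J})_m = 0$ as required.

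The only substantive content is the multigraded mapping cone criterion itself. The homological degree one case is essentially immediate from the hypothesis on minimal generators: if $u$ is a minimal monomial generator of $(L:g)$ and $gu$ has multidegree $m$, then $gu$ is divisible by $g$, hence by hypothesis is not a minimal generator of $L$, so $gu \in \mathfrak{m} L$ and the component of $\phi_0$ in multidegree $m$ lands in $\mathfrak{m}$ times the target. The inductive step from $\phi_i$ to $\phi_{i+1}$ uses minimality of the target resolution together with the fact that $g$ is a pure power (which is what lets one produce multigraded lifts of cycles in $\mathfrak{m} F_i$ within $\mathfrak{m} F_{i+1}$ in multidegree $m$). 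Since the paper signals that this is immediate from mapping cone theory, I would cite the criterion as a standard lemma rather than reprove it here, and this lifting step is the only place where one needs to be careful.
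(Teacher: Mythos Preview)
Your argument for $I$ is fine: the global statement ``if no minimal generator of $L$ is divisible by $b^{\beta}$ then $b^{\beta}_{*}=0$ on $\Tor$'' is correct, and is essentially what the paper proves via the Taylor resolution.  The problem is the multidegree-by-multidegree refinement you invoke for $J$, which is false.  Take $S=\bbbk[x,y]$, $L=(x^{2},y^{2})$, $g=y$, and $m=x^{2}y^{2}$: there is no minimal generator of $L$ in multidegree $m$, so your hypothesis holds vacuously, yet the map $y_{*}:\Tor_{2}(\bbbk,S/(L:y))(y^{-1})\to\Tor_{2}(\bbbk,S/L)$ is an isomorphism in multidegree $m$ (compute the Koszul resolutions directly, or read it off the long exact sequence for $0\to S/(x^{2},y)(y^{-1})\to S/L\to S/(x^{2},y)\to 0$).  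The inductive step you sketch cannot be salvaged: knowing that $\phi_{i}$ lands in $\mathfrak{m}G_{i}$ in multidegree $m$ says nothing about $\phi_{i}$ in the strictly smaller multidegrees that feed into $\phi_{i+1}(e)$ through $\phi_{i}\,d_{F}(e)$.

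The paper avoids any such criterion and argues purely on the multidegree support of $\Tor$.  The point you are missing is that $(J:b^{\beta})=(I:b^{\beta})$ has no minimal generator divisible by $b$ (an immediate consequence of $I$ having none divisible by $b^{\beta}$), so by the Taylor resolution the \emph{source} $\Tor_{i}(\bbbk,S/(J:b^{\beta}))(b^{-\beta})$ is already zero in every multidegree whose $b$-exponent is not exactly $\beta$.  You used this source-vanishing idea for the case $b^{\beta}\nmid m$; the same reasoning disposes of the case $b^{\beta+1}\mid m$ as well, and then your false criterion is never needed.
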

\begin{proof}
Observe that $(I:b^{\beta})$ has no minimal generators divisible by $b$.
Thus, by the Taylor resolution (see e.g.\ \cite{Ei1}*{Exercise 17.11}), its Betti numbers are concentrated in
multidegrees not divisible by $b$, and so
$\tor_{i}(\bbbk,S/(I:b^{\beta}))(b^{-\beta})$ 
is nonzero only in multidegrees of the form $fa^{s}b^{\beta}$.
Furthermore, again by the Taylor resolution, the Betti numbers of
$S/I$ (and so the $\tor_{i}(\bbbk, S/I)$) are concentrated in multidegrees
not divisible by $b^{\beta}$, and those of $S/J$ are concentrated in
multidegrees not divisible by $b^{\beta+1}$.  As the maps
$b^{\beta}_{*,i,I}$ and 
$b^{\beta}_{*,i,J}$ are multihomogeneous, the proposition follows.
%
\end{proof}

\begin{lemma}\label{shadowplusp}
If $m=fa^{s}b^{\beta}$, with $s\geq \beta-t-1$, then
$\Shadow_{m}(J+b^{\beta})=\Shadow_{m}(I+b^{\beta})$.  
\end{lemma}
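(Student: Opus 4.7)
The plan is to compare the two shadows by unpacking definitions. I would write $m = fa^s b^\beta$ with $f$ coprime to $ab$, so that $m/\sqrt{m} = (f/\sqrt{f})\cdot a^{\max(s-1,0)} b^{\beta-1}$, and observe that every squarefree $\mu \in \bbbk[\supp(m)]$ factors uniquely as $\mu = \mu_0 a^{\varepsilon_a} b^{\varepsilon_b}$ with $\mu_0 \mid \sqrt{f}$ and $\varepsilon_a,\varepsilon_b \in \{0,1\}$ (forcing $\varepsilon_a = 0$ when $s=0$). Then $\mu \in \Shadow_m(I+b^\beta)$ iff $\mu\cdot(m/\sqrt{m}) \in I+b^\beta$, and similarly for $J$, so it suffices to compare $I+b^\beta$- and $J+b^\beta$-membership of the specific monomials arising this way.

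First I would split on $\varepsilon_b$. When $\varepsilon_b = 1$ the product is divisible by $b^\beta$, hence lies in both $I+b^\beta$ and $J+b^\beta$ automatically, contributing identically to the two shadows. When $\varepsilon_b = 0$ the product has $b$-exponent exactly $\beta-1$, so membership in $I+b^\beta$ (respectively $J+b^\beta$) is equivalent to membership in $I$ (respectively $J$); the task is thus reduced to showing $g a^p b^{\beta-1} \in I \iff g a^p b^{\beta-1} \in J$, where $g = \mu_0 f/\sqrt{f}$ and $p \in \{s-1,s\}$ (or $p=0$ when $s=0$).

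The key step is to use the hypothesis $s \geq \beta - t - 1$, which gives $p \geq \beta - t - 2$, and then locate $g a^p b^{\beta-1}$ in the defining list for $J = \Shift_{a,b,t+1}(I)$. If $\beta \leq t+1$ the $b$-exponent $\beta-1$ is strictly less than $t+1$, so the first clause of the definition gives the equivalence immediately (and this case also absorbs $s=0$). Otherwise set $s' = \beta-t-2 \geq 0$, so $\beta-1 = s'+(t+1)$ and $p \geq s'$: when $p = s'$ the ``diagonal'' clause gives the equivalence, and when $p > s'$ the third clause gives $g a^p b^{\beta-1} \in J$ iff $g a^p b^{\beta-1} \in I$ or $g a^{s'} b^{p+t+1} \in I$; by Lemma \ref{techlemma} (applied with $\ell = p$, noting $p+t+1 \geq \beta$) the second disjunct implies the first, so the condition collapses to $g a^p b^{\beta-1} \in I$.

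The main obstacle is really just bookkeeping: the fourth clause of the definition of $J$, a conjunctive condition on pairs $g a^{s''} b^{\ell+t+1}$ with $s'' < \ell$, does not collapse so cleanly, and the hypothesis $s \geq \beta - t - 1$ is precisely what forces $p \geq s'$ and so prevents the fourth clause from arising. Beyond this, the only nontrivial input is Lemma \ref{techlemma}, which packages the two standing hypotheses on $I$ ($(a,b,t)$-shiftedness and the absence of minimal generators divisible by $b^\beta$) in exactly the form needed to reduce the third clause.
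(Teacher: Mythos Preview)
Your proof is correct and follows essentially the same route as the paper's: both reduce the shadow comparison to membership of monomials $n$ dividing $m$ with $m/n$ squarefree, dispose of the case $b^{\beta}\mid n$ trivially, and then locate $n=g a^{p}b^{\beta-1}$ in the defining clauses of $J=\Shift_{a,b,t+1}(I)$, using $p\geq \beta-t-2$ to avoid the fourth clause. The only cosmetic difference is that for the converse direction the paper argues directly (drop the $b$-exponent to $\beta-1$ using the no-$b^{\beta}$-generators hypothesis, then raise the $a$-exponent since $I$ is an ideal), whereas you package the same step as an invocation of Lemma~\ref{techlemma}.
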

\begin{proof}
Let $n$ be a monomial dividing $m$, and such that $\frac{m}{n}$ is
squarefree.  We will show that $n\in I+b^{\beta}$ if and only if
$n\in J+b^{\beta}$, 
from which the lemma follows.  
If $b^{\beta}$ divides $n$, then $n\in I+b^{\beta}$ and $n\in
J+b^{\beta}$.  
Otherwise, write $n=f'a^{s}b^{\beta-1}$ (or, mutatis mutandis,
$f'a^{s-1}b^{\beta-1}$).  Then $s\geq (\beta-1)-t-1$, so $n\in J$ if
$n\in I$.  Conversely, if $n\in J$, we have $n\in I$ or
$f'a^{\beta-t-2}b^{s+t+1}\in I$.  In the latter case, $s+t\geq
\beta-1$, so by construction $f'a^{\beta-t-2}b^{\beta-1}\in I$ and so
$f'a^{s}b^{\beta-1}\in I$, i.e., $n\in I$.
%
%
\end{proof}

The following are immediate:
\begin{lemma}\label{smallb}
If $m$ is not divisible by $b^{\beta}$, then
$\shadow_{m}(I+b^{\beta})=\shadow_{m}(I)$ and
$\shadow_{m}(J+b^{\beta})=\shadow_{m}(J)$.  
\end{lemma}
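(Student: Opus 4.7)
The plan is to unpack the definition of $\Shadow_m$ and verify directly that the pure power $(b^{\beta})$ contributes nothing to the shadow once $m$ is not divisible by $b^{\beta}$. Recall that
\[
\Shadow_{m}(K) = \sqfree\left((K:\tfrac{m}{\sqrt{m}}) \cap \bbbk[\supp(m)]\right).
\]
For any monomial ideals $A$, $B$, $C$ and any monomial $u$, one has the standard identities
\[
(A+B:u) = (A:u) + (B:u), \qquad (A+B)\cap C = (A\cap C) + (B\cap C), \qquad \sqfree(A+B) = \sqfree(A) + \sqfree(B).
\]
Applied to $K+(b^{\beta})$ with $K \in \{I,J\}$ and $u = \frac{m}{\sqrt{m}}$, these reduce the lemma to showing that
\[
\sqfree\bigl(((b^{\beta}):\tfrac{m}{\sqrt{m}}) \cap \bbbk[\supp(m)]\bigr) = 0,
\]
since this statement depends only on $m$ and $b^{\beta}$, not on $K$.

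To verify the displayed vanishing, let $c$ denote the exponent of $b$ in $m$. The hypothesis $b^{\beta}\nmid m$ means $c \leq \beta - 1$, so the exponent of $b$ in $\frac{m}{\sqrt{m}}$ is $\max(c-1,0) \leq \beta - 2$. If $c = 0$, then $b \notin \supp(m)$, and $(b^{\beta})\cap\bbbk[\supp(m)] = 0$. If $c \geq 1$, then $((b^{\beta}):\frac{m}{\sqrt{m}}) = (b^{\beta - c + 1})$ with $\beta - c + 1 \geq 2$; intersecting with $\bbbk[\supp(m)]$ (which contains $b$) leaves $(b^{\beta - c + 1})$, whose every monomial is divisible by $b^{2}$ and hence is not squarefree. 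In either case, the squarefree part is $0$.

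Combining the reduction with this computation yields both equalities simultaneously. There is no real obstacle here: the argument is a purely formal bookkeeping of colons, intersections, and squarefree parts, which is precisely why the authors deem the lemma immediate. The only subtlety worth flagging in the write-up is the strict inequality $\beta - c + 1 \geq 2$, which is exactly what forces the squarefree part to vanish and which relies crucially on the hypothesis $b^{\beta}\nmid m$.
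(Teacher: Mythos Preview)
Your proof is correct and is precisely the unpacking that the paper leaves implicit by declaring the lemma ``immediate.'' The key observation---that when $b^{\beta}\nmid m$ the colon $((b^{\beta}):\frac{m}{\sqrt m})$ contributes no squarefree monomials in $\bbbk[\supp(m)]$---is exactly what the authors have in mind, and your case split on whether $b\in\supp(m)$ handles it cleanly.
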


\begin{lemma}\label{bigb}
If $m$ is divisible by $b^{\beta+1}$, then
$\shadow_{m}(I+b^{\beta})=\shadow_{m}(J+b^{\beta})=(1)$.  
\end{lemma}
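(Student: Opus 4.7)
The plan is to unwind the definitions and observe that $\frac{m}{\sqrt{m}}$ is already a multiple of $b^{\beta}$, so the colon ideal becomes the whole ring.

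More precisely, write $m = b^{c}\cdot g$ where $b\nmid g$, with hypothesis $c \geq \beta+1$. Then $\sqrt{m} = b\cdot \sqrt{g}$, so $\frac{m}{\sqrt{m}} = b^{c-1}\cdot \frac{g}{\sqrt{g}}$, and since $c-1 \geq \beta$, this is divisible by $b^{\beta}$. Therefore $\frac{m}{\sqrt{m}} \in (b^{\beta}) \subseteq I+b^{\beta}$, which is to say $1 \in \bigl(I+b^{\beta} : \frac{m}{\sqrt{m}}\bigr)$. Intersecting with $\bbbk[\supp(m)]$ and taking squarefree parts still contains $1$, so $\shadow_{m}(I+b^{\beta}) = (1)$. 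The same argument applied to $J+b^{\beta}$ yields the other equality. I expect no real obstacle here; the statement is essentially a bookkeeping observation ensuring that the high-$b$ multidegrees contribute trivially and can therefore be ignored in the multidegree-by-multidegree comparison that drives the argument of the surrounding section.
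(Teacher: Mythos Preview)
Your argument is correct and is exactly the direct unwinding of the definition that the paper has in mind; the paper itself gives no proof, stating only that the lemma is ``immediate.'' There is nothing to add.
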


Using these shadows to compute Betti numbers via Theorem \ref{keylemma}, 
we obtain the following:

\begin{lemma}\label{easy}\ 
\begin{itemize}
\item[(1)] Suppose $m=fa^{s}b^{s+t+1}$ or $fa^{s}b^{r}$ with $r<t+1$.
  Then we have $b_{i,m}(J+b^{\beta})\geq b_{i,m}(I+b^{\beta})$.  
\item[(2)] Suppose $s\lneqq\ell$, and $\ell+t+1\neq \beta$.  Put
  $m=fa^{\ell}b^{s+t+1}$ and $n=fa^{s}b^{\ell+t+1}$.  Then
  $b_{i,m}(J+b^{\beta})+b_{i,n}(J+b^{\beta})\geq
  b_{i,m}(I+b^{\beta})+b_{i,n}(I+b^{\beta})$. 
\end{itemize}
\end{lemma}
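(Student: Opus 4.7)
The plan is to compute both sides of each inequality via Theorem \ref{keylemma}, which rewrites every relevant $b_{i,m}(I+b^{\beta})$ and $b_{i,m}(J+b^{\beta})$ as a squarefree Betti number of the corresponding shadow. The shadow lemmas (Lemmas \ref{shadowplusp}, \ref{smallb}, \ref{bigb}) will then reduce each case either to a trivial equality or to a comparison of shadows of $I$ and $J$ themselves, at which point Proposition \ref{sharpbetti}, applied with shift parameter $t+1$, finishes the argument. The whole proof is a case analysis on the $b$-exponent of the multidegree relative to $\beta$.

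For part (1), let $q$ denote the $b$-exponent of $m$ (so $q=s+t+1$ or $q=r$). If $q<\beta$ then $m$ is not divisible by $b^{\beta}$, and Lemma \ref{smallb} gives $\Shadow_m(I+b^{\beta})=\Shadow_m(I)$ and $\Shadow_m(J+b^{\beta})=\Shadow_m(J)$; via Theorem \ref{keylemma} the claim reduces to $b_{i,m}(J)\geq b_{i,m}(I)$, which is bullet 1 or bullet 2 of Proposition \ref{sharpbetti}. If $q>\beta$, Lemma \ref{bigb} makes both shadows $(1)$, and the Betti numbers vanish identically. If $q=\beta$, then $m=fa^{s}b^{\beta}$; when $q=s+t+1$ this forces $s=\beta-t-1$, while when $q=r<t+1$ it forces $\beta<t+1$ so that $\beta-t-1<0\leq s$. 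Either way Lemma \ref{shadowplusp} applies and gives equal shadows, hence equal Betti numbers.

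For part (2), I would split on $\ell+t+1$ relative to $\beta$ (the case of equality is excluded by hypothesis). If $\ell+t+1<\beta$, both $m$ and $n$ have $b$-exponent less than $\beta$, so Lemma \ref{smallb} applies to both and the claim becomes bullet 3 of Proposition \ref{sharpbetti}. If $\ell+t+1>\beta$, split further on $s+t+1$: when $s+t+1>\beta$, both $m$ and $n$ are divisible by $b^{\beta+1}$ and Lemma \ref{bigb} finishes; when $s+t+1=\beta$, the multidegree $m=fa^{\ell}b^{\beta}$ satisfies $\ell\geq s+1=\beta-t\geq\beta-t-1$, so Lemma \ref{shadowplusp} handles $m$ and Lemma \ref{bigb} handles $n$.

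The main obstacle is the remaining mixed case $\ell+t+1>\beta$ with $s+t+1<\beta$. Here Lemma \ref{smallb} turns $b_{i,m}$ of the $+b^{\beta}$ ideals into $b_{i,m}$ of $I$ and $J$, while Lemma \ref{bigb} forces $b_{i,n}(I+b^{\beta})=b_{i,n}(J+b^{\beta})=0$, so the sum inequality collapses to the single inequality $b_{i,m}(J)\geq b_{i,m}(I)$, which is not directly produced by bullet 3 of Proposition \ref{sharpbetti}. The trick will be to observe that $b_{i,n}(I)$ and $b_{i,n}(J)$ are themselves zero: by the Taylor resolution the multigraded Betti numbers of a monomial ideal are supported in multidegrees that are LCMs of its minimal generators, and no minimal generator of $I$ has $b$-exponent $\geq\beta$ (by hypothesis) while no minimal generator of $J$ has $b$-exponent $>\beta$ (by the corollary following Corollary \ref{samecolon}). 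Since $n$ has $b$-exponent $\ell+t+1\geq\beta+1$, both $b_{i,n}(I)$ and $b_{i,n}(J)$ vanish, and bullet 3 of Proposition \ref{sharpbetti} collapses to the desired inequality.
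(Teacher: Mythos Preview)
Your argument for part (1) is exactly the paper's: split on the $b$-exponent relative to $\beta$ and invoke Lemmas \ref{smallb}, \ref{bigb}, \ref{shadowplusp} together with Proposition \ref{sharpbetti}.

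For part (2) your proof is correct but organized differently from the paper's. The paper makes a single cut at $s+t+1=\beta$ versus $s+t+1\neq\beta$. In the latter case it applies the mapping cone directly: since $s+t+1\neq\beta$ and $\ell+t+1\neq\beta$, Proposition \ref{manyzero} kills all the image terms for both $I$ and $J$ in multidegrees $m$ and $n$, so each side of the desired inequality expands as the corresponding sum $b_{i,m}+b_{i,n}$ of the uncolon ideal plus a colon term; Corollary \ref{samecolon} matches the colon terms and Proposition \ref{sharpbetti} finishes. You instead stay entirely in the shadow framework, splitting first on $\ell+t+1$ versus $\beta$ and then on $s+t+1$, and in the mixed case $s+t+1<\beta<\ell+t+1$ you recover what the mapping cone would have given by a direct Taylor-resolution observation that $b_{i,n}(I)=b_{i,n}(J)=0$. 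This is valid --- it is essentially the same fact that underlies Proposition \ref{manyzero} --- and it lets you avoid unpacking the long exact sequence, at the cost of a longer case list. The paper's route is shorter and more uniform; yours is more elementary in that it never leaves the shadow/Theorem \ref{keylemma} formalism.
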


\begin{proof}\ 
\begin{itemize}
\item[(1)] If the exponent on $b$ is less than $\beta$, apply Lemma
  \ref{smallb} and Proposition \ref{sharpbetti}.  If it is greater
  than $\beta$, apply Lemma \ref{bigb}.  If the exponent is equal to
  $\beta$, apply Lemma \ref{shadowplusp}.
\item[(2)]  If $s+t+1=\beta$, we have
  $b_{i,n}(I+b^{\beta})=b_{i,n}(J+b^{\beta})$ by 
  Lemma \ref{bigb} and $b_{i,m}(I+b^{\beta})=b_{i,m}(J+b^{\beta})$ by Lemma
  \ref{shadowplusp}.  If $s+t+1\neq \beta$, then, applying the mapping
  cone
  and Proposition \ref{manyzero}, the left-hand side is
  equal to $b_{i,m}(J) + b_{i,n}(J) + b_{i-1,b^{-\beta}m}(J:b^{\beta}) +
  b_{i-1,b^{-\beta}n}(J:b^{\beta})$, while the right-hand side is equal to
  $b_{i,m}(I) + b_{i,n}(I) + b_{i-1,b^{-\beta}m}(I:b^{\beta}) + 
  b_{i-1,b^{-\beta}n}(I:b^{\beta})$.  Apply Proposition \ref{sharpbetti} and
  Corollary \ref{samecolon}.\qedhere
\end{itemize}
\end{proof}

Thus, the first two sums in formula (\ref{thesummation}) are larger
for $J+b^{\beta}$ than for $I+b^{\beta}$.
It remains to consider the case that $m=fa^{s}b^{\ell+t+1}$, with
$\ell+t+1=\beta$.  
We fix $m=fa^{s}b^{\beta}$ with $\beta=\ell+t+1\gneqq s+t+1$, 
multiply $I$ by
$a^{t+1}$, and recall the Mayer-Vietoris sequence 
from the previous section:
\begin{multline*}
0\to (\ker \Delta_{i,a^{t+1}I})_{a^{t+1}{m}}\to
\tor_{i}\left(\bbbk,{a^{t+1}I\cap\sigma(a^{t+1}I)}\right)_{a^{t+1}{m}}  
\xrightarrow{\Delta_{i,a^{t+1}I}}\\
\tor_{i}\left(\bbbk,{a^{t+1}I}\right)_{a^{t+1}{m}}
\bigoplus
\tor_{i}\left(\bbbk,{\sigma(a^{t+1}I)}\right)_{a^{t+1}{m}}\to\\ 
\tor_{i}\left(\bbbk,{a^{t+1}I+\sigma(a^{t+1}I)}\right)_{a^{t+1}{m}}\to  
(\ker\Delta_{i-1, a^{t+1}I})_{a^{t+1}{m}}\to 0.
\end{multline*}

\begin{lemma}\label{sameshadow}
$\shadow_{a^{t+1}m}(a^{t+1}J) =
  \shadow_{a^{t+1}m}(a^{t+1}I\cap \sigma(a^{t+1}I))$.   
\end{lemma}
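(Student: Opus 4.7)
The plan is to reduce the claim to a statement about $(a,b)$-shifted ideals via the $a^{t+1}$-rescaling trick, and then invoke Proposition~\ref{symmetry}. Set $\bar m = a^{t+1}m = f a^{s+t+1} b^{\ell+t+1}$, $I' = a^{t+1} I$, and $J' = a^{t+1} J$. By Proposition~\ref{tshiftisshift}, $J' = \shiftme_{a,b}(I')$, and in particular $J'$ is $(a,b)$-shifted. Applying Proposition~\ref{symmetry} to $I'$ gives $J' \cap \sigma(J') = I' \cap \sigma(I')$, so the right-hand side of the claimed equality is $\shadow_{\bar m}(J' \cap \sigma(J'))$. It therefore suffices to prove $\shadow_{\bar m}(J') = \shadow_{\bar m}(J' \cap \sigma(J'))$.

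The inclusion $\shadow_{\bar m}(J' \cap \sigma(J')) \subseteq \shadow_{\bar m}(J')$ is immediate from monotonicity of the colon-and-squarefree construction in its ideal argument. For the reverse inclusion, I would unwind the definition: a squarefree monomial $\mu \in \bbbk[\supp(\bar m)]$ lies in $\shadow_{\bar m}(J')$ exactly when $n := \mu \cdot (\bar m/\sqrt{\bar m}) \in J'$. Writing $\mu = \mu_0 a^{\epsilon_a} b^{\epsilon_b}$ with $\mu_0$ dividing $\sqrt{f}$ and $\epsilon_a,\epsilon_b \in \{0,1\}$, a direct computation gives $n = \mu_0 (f/\sqrt{f}) a^{s+t+\epsilon_a} b^{\ell+t+\epsilon_b}$.

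Since $\ell > s$ and $\epsilon_a - \epsilon_b \leq 1$, the $b$-exponent of $n$ is always at least its $a$-exponent. If strictly greater, then $(a,b)$-shiftedness of $J'$ forces $\sigma(n) \in J'$. If equal, we must be in the boundary configuration $\ell = s+1$, $\epsilon_a = 1$, $\epsilon_b = 0$, where $n = \sigma(n)$ automatically so $\sigma(n) \in J'$ trivially. Either way, $n \in J' \cap \sigma(J')$, whence $\mu \in \shadow_{\bar m}(J' \cap \sigma(J'))$, as required.

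I expect the main obstacle to be careful bookkeeping: tracking the boundary configuration $\ell = s+1$ where shiftedness degenerates, and confirming that the rescaling Propositions~\ref{tshiftisshift} and~\ref{symmetry} transport cleanly to $I'$. Conceptually the lemma is just the observation that the $(a,b)$-shiftedness of $J'$ already forces every relevant $b$-heavy monomial into $\sigma(J')$ as well, so at a multidegree as $b$-heavy as $\bar m$, intersecting with $\sigma(J')$ cannot shrink the shadow.
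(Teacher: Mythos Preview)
Your argument is correct. The paper's proof is marginally more direct: rather than passing through Proposition~\ref{symmetry}, it simply observes that for a $b$-heavy monomial $n$ (one with $b$-exponent at least its $a$-exponent), the very definition of $a^{t+1}J=\shiftme_{a,b}(a^{t+1}I)$ reads off as ``$n\in a^{t+1}J$ iff $n\in a^{t+1}I$ and $\sigma(n)\in a^{t+1}I$,'' which is exactly $n\in a^{t+1}I\cap\sigma(a^{t+1}I)$. Your route---first replacing $I'\cap\sigma(I')$ by $J'\cap\sigma(J')$ via Proposition~\ref{symmetry}, then using the $(a,b)$-shiftedness of $J'$ to show $n\in J'\Rightarrow n\in\sigma(J')$---unwinds to the same computation, just organized around the output ideal $J'$ rather than the input ideal $I'$. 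Neither approach gains anything substantive over the other; they are two phrasings of the same elementary observation about $b$-heavy monomials under $\shiftme_{a,b}$.
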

\begin{proof} 
Let $n$ be a monomial dividing $a^{t+1}m$, and such that
  $\frac{a^{t+1}m}{n}$ is 
  squarefree.   We will show that $n\in a^{t+1}J$ if and only if $n\in
  a^{t+1}I\cap \sigma(a^{t+1}I)$, from which the lemma follows.  
We may write
$n=f'a^{s+t+1-\varepsilon_{a}}b^{\ell+t+1-\varepsilon_{b}}$ with
  $\varepsilon_{a}, \varepsilon_{b}=0$ or $1$ (so
  $s-\varepsilon_{a}\leq \ell-\varepsilon_{b}$).  
By definition $n\in a^{t+1}J$ if and only if $n\in a^{t+1}I$ and
$f'a^{\ell+t+1-\varepsilon_{b}}b^{s+t+1-\varepsilon_{a}}\in a^{t+1}I$,
if and only if $n\in a^{t+1}I$ and $n\in \sigma(a^{t+1}I)$.
\end{proof}

\begin{corollary}\label{theisomorphism}
$\displaystyle\Tor_{i}\left(\bbbk,{J}\right)_{m}\cong
  \tor_{i}\left(\bbbk, {a^{t+1}I\cap
  \sigma(a^{t+1}I)}\right)(a^{t+1})_{m}$.  
\end{corollary}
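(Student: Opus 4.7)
The plan is to establish the stated isomorphism by chaining together three canonical identifications, all at the level of the Koszul complex restricted to appropriate multidegrees.

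First, since $a$ is a nonzerodivisor on $S$, multiplication by $a^{t+1}$ is an $S$-module isomorphism $J \xrightarrow{\sim} a^{t+1}J$ which raises every multidegree by $a^{t+1}$. Applying $\Tor_i(\bbbk, -)$ and restricting to multidegree $m$ produces
\[
\Tor_i(\bbbk, J)_m \;\cong\; \Tor_i(\bbbk, a^{t+1}J)_{a^{t+1}m}.
\]
By Proposition \ref{tshiftisshift} we have $a^{t+1}J = \shiftme_{a,b}(a^{t+1}I)$, which is exactly the form in which Lemma \ref{sameshadow} was proved.

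Second, I invoke Theorem \ref{keylemma}, which, applied at multidegree $a^{t+1}m$ to any monomial ideal $N$, provides a natural isomorphism of Koszul subcomplexes between $(\mathbb{K}\tensor N)_{a^{t+1}m}$ and $(\mathbb{K}\tensor \shadow_{a^{t+1}m}(N))_{\sqrt{a^{t+1}m}}$, hence of their homologies. Applied to $N = a^{t+1}J$ and to $N = a^{t+1}I \cap \sigma(a^{t+1}I)$ this yields
\[
\Tor_i(\bbbk, a^{t+1}J)_{a^{t+1}m} \cong \Tor_i(\bbbk, \shadow_{a^{t+1}m}(a^{t+1}J))_{\sqrt{a^{t+1}m}}
\]
and the analogous identification for $a^{t+1}I \cap \sigma(a^{t+1}I)$.

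Third, Lemma \ref{sameshadow} asserts that the two shadows appearing on the right-hand sides are literally the same ideal of $\bbbk[\supp(a^{t+1}m)]$. Composing the three isomorphisms, and recalling the paper's shift convention $N(a^{t+1})_m = N_{a^{t+1}m}$, gives exactly the claimed identification
\[
\Tor_i(\bbbk, J)_m \;\cong\; \Tor_i(\bbbk, a^{t+1}I \cap \sigma(a^{t+1}I))(a^{t+1})_m.
\]

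The main obstacle is conceptual rather than computational: one must take care that the equalities in Theorem \ref{keylemma}, stated as equalities of Betti numbers, actually arise from canonical vector-space isomorphisms of the underlying Koszul subcomplexes, so that they may legitimately be composed across the shadow equality supplied by Lemma \ref{sameshadow}. This is visible from the proof of Theorem \ref{keylemma}, which identifies the relevant pieces of $\mathbb{K}\tensor(-)$ as literally equal rather than merely equidimensional; once this is acknowledged, the proof reduces to writing down the composition.
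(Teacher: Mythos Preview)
Your argument is correct and follows essentially the same route as the paper: multiply by $a^{t+1}$, then use the observation from the proof of Theorem~\ref{keylemma} that $(\mathbb{K}\tensor M)_{m}$ depends only on $\shadow_{m}(M)$, together with Lemma~\ref{sameshadow}.  The only differences are cosmetic: the paper stays at multidegree $a^{t+1}m$ and notes directly that the two Koszul subcomplexes are literally equal there (rather than routing both sides through the shadow at $\sqrt{a^{t+1}m}$), and it records the explicit formula $\phi_{i,m}([g])=[a^{t+1}g]$ for the resulting isomorphism, which you will want when you prove Proposition~\ref{smaller}.
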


\begin{proof} From the proof of Theorem \ref{keylemma}, the complex
  $(\mathbb{K}_{\bullet}\tensor M)_{m}$ depends only on
  $\shadow_{m}(M)$ for any monomial ideal $M$ and multidegree $m$.

  We have $(\mathbb{K}_{\bullet}\tensor J)_{m}\cong
  (\mathbb{K}_{\bullet}\tensor a^{t+1}J)(a^{t+1})_{m} =
  (\mathbb{K}_{\bullet}\tensor
  (a^{t+1}I\cap\sigma(a^{t+1}I)))(a^{t+1})_{m}$, the first isomorphism
  given by multiplication by $a^{t+1}$, the second equality by
  applying Lemma \ref{sameshadow}.  This isomorphism of
  complices induces an isomorphism on $\tor$,
  \[\phi_{i,m}:\tor_{i}(\bbbk,J)_{m}\to \tor_{i}(\bbbk,
  a^{t+1}I\cap\sigma(a^{t+1}I))(a^{t+1})_{m}\] given by
  $\phi_{i,m}([g])=[a^{t+1}g]$ for any cycle $[g]\in K_{i}\tensor
  J$.
\end{proof}

We view $\im(b^{\beta}_{*,i,J})_{m}$ 
  and $(\ker \Delta_{i-1,a^{t+1}I})(a^{t+1})_{m}$ as submodules of
  $\tor_{i-1}(\bbbk,J)$ (via the natural
  isomorphism with $\tor_{i}(\bbbk,S/J)$)
  and of
  $\tor_{i-1}(\bbbk,a^{t+1}I\cap 
  \sigma(a^{t+1}I))(a^{t+1})_{m}$, respectively.
  The isomorphism $\phi_{i,m}$ allows us to compare these two vector
  spaces.  

\begin{proposition}\label{smaller}
$\phi_{i,m}(\im(b^{\beta}_{*,i,J})_{m})\subset (\ker
  \Delta_{i-1,a^{t+1}I})(a^{t+1})_{m}$.
\end{proposition}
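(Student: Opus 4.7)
The plan is to fix $\xi \in \im(b^\beta_{*,i,J})_m$ and, via the natural isomorphism $\tor_i(\bbbk, S/J)\cong\tor_{i-1}(\bbbk,J)$, represent it as $[b^\beta h]$ where $h=\sum_j\alpha_j g_j e_{\mu_j}$ is a cycle in $K_{i-1}\otimes(J:b^\beta) = K_{i-1}\otimes(I:b^\beta)$ (using Corollary \ref{samecolon}), at multidegree $b^{-\beta}m$. Write $m = fa^{s_0}b^\beta$ with $\beta = \ell_0+t+1$ and $\ell_0 > s_0$. By Corollary \ref{theisomorphism}, $\phi([b^\beta h]) = [a^{t+1}b^\beta h]$, so it suffices to show that $a^{t+1}b^\beta h$ is a boundary in both $K_\bullet \otimes a^{t+1}I$ and $K_\bullet\otimes\sigma(a^{t+1}I)$; this will place $\phi(\xi)$ in $\ker \Delta_{i-1,a^{t+1}I}$.

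For the $a^{t+1}I$ side, I lift using $\eta_1 = (-1)^{i-1}\sum_j\alpha_j\, a^{t+1}b^{\beta-1}g_j\,e_{\mu_j}\wedge e_b$. This is a valid chain in $K_i\otimes a^{t+1}I$: each $\mu_j$ divides $fa^{s_0}$ and so is not divisible by $b$, so the wedge with $e_b$ is legitimate, and $b^{\beta-1}g_j\in I$ because any minimal generator of $I$ dividing $g_jb^\beta$ has $b$-exponent at most $\beta-1$ by hypothesis. A direct Leibniz computation yields $D(\eta_1)= a^{t+1}b^\beta h + (-1)^{i-1}a^{t+1}b^{\beta-1}\,D(h)\wedge e_b = a^{t+1}b^\beta h$, since $D(h)=0$.

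For the $\sigma(a^{t+1}I)$ side, since $\sigma$ commutes with $D$, it is equivalent to show that $\sigma(a^{t+1}b^\beta h) = b^{t+1}a^\beta\sigma(h)$ is a boundary in $K_\bullet\otimes a^{t+1}I$. Split the summands of $h$ by whether $\mu_j$ contains $a$: write $\mu_j = \mu_j''a^{\epsilon_j}$ with $\mu_j''$ not divisible by $a$ or $b$, and let $I_0,I_1$ be the index sets for $\epsilon_j=0,1$. Then $g_j = \widetilde{g}_j a^{s_0-\epsilon_j}$ for some $\widetilde{g}_j$ not divisible by $a$ or $b$, and the target decomposes into an $I_0$-part $\sum_{j\in I_0}\alpha_j b^{t+1+s_0}a^\beta\widetilde{g}_j e_{\mu_j''}$ plus an $I_1$-part $\sum_{j\in I_1}\pm\alpha_j b^{t+s_0}a^\beta\widetilde{g}_j e_{\mu_j''}\wedge e_b$. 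I take the lift $\eta_2 = (-1)^{i-1}\sum_{j\in I_0}\alpha_j\, b^{t+s_0}a^\beta\widetilde{g}_j\, e_{\mu_j''}\wedge e_b$, built only from $I_0$-indexed terms; its coefficient lies in $a^{t+1}I$ because $\widetilde{g}_j a^{s_0}b^{\ell_0+t} = g_jb^{\beta-1}\in I$, and $(a,b,t)$-shifting then gives $\widetilde{g}_j a^{\ell_0}b^{s_0+t}\in I$.

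A Leibniz calculation shows $D(\eta_2)$ splits into two parts. The $(-1)^{i-1}\lambda b e_{\mu_j''}$ piece of each summand produces the required $I_0$-part of the target directly. The remaining piece is $(-1)^{i-1}b^{t+s_0}a^\beta\bigl(\sum_{j\in I_0}\alpha_j\widetilde{g}_j D(e_{\mu_j''})\bigr)\wedge e_b$, and I match it to the $I_1$-part by extracting from $D(h)=0$ the projection onto the Koszul basis vectors not containing $e_a$: after canceling the common factor $a^{s_0}$, this yields $\sum_{j\in I_0}\alpha_j\widetilde{g}_j D(e_{\mu_j''}) = (-1)^{i-1}\sum_{j\in I_1}\alpha_j\widetilde{g}_j e_{\mu_j''}$. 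The main obstacle is that a direct lift of the $I_1$-part (wedging with $e_a\wedge e_b$) would require $\widetilde{g}_ja^{\ell_0-1}b^{s_0+t+1}\in I$, which is not forced by the $(a,b,t)$-shifted hypothesis; the insight is that both pieces of the target can be lifted simultaneously from $I_0$ alone, with the $I_1$-contribution produced automatically by the cycle condition on $h$, leaving only sign bookkeeping to complete the argument.
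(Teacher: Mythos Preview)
Your argument is correct and ultimately constructs the same lift as the paper, but the organization is different enough to be worth a comparison. The paper first invokes exactness of $(\mathbb{K}_\bullet\otimes a^{t+1}I)_{a^{t+1}m}$ (via the Taylor resolution, since $I$ has no Betti numbers in multidegrees divisible by $b^\beta$) to produce an \emph{abstract} preimage, decomposes it according to the presence of $e_a$ and $e_b$, adjusts the representative of the cycle to kill the pieces divisible by $b^\beta$, reads off the relation $f_1=D(f_2)$ from the boundary equation, and then checks that $a^{s+t}b^\beta e_a\wedge f_2$ lies in $\sigma(a^{t+1}I)\otimes K_i$. You instead write down both lifts explicitly from the outset: $\eta_1$ (wedging the cycle with $e_b$) for the $a^{t+1}I$ side, and $\eta_2$ (applying $\sigma$ and again wedging with $e_b$, using only the $I_0$-indexed terms) for the other side, with the cycle condition $D(h)=0$ playing exactly the role of the paper's identity $f_1=D(f_2)$. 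Unwinding the definitions, your $\eta_2$ is $\sigma$ applied to the paper's $a^{s+t}b^\beta e_a\wedge f_2$, so the two constructions coincide. Your route is more elementary---no appeal to abstract exactness, no need to modify the representative---at the cost of the sign bookkeeping you flag at the end; that bookkeeping does work out, because the position-dependent sign coming from the $a$-term of $D(e_{\mu_j})$ exactly cancels against the sign incurred when rewriting $\sigma(e_{\mu_j})$ as $e_{\mu_j''}\wedge e_b$.
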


\begin{proof}
  An element of $\im(b^{\beta}_{*,i,J})_{m}$ has the form
  $[b^{\beta}g]$, where $g$ is a cycle in ${K}_{i-1}\tensor
  (J:b^{\beta})$.  (Consider e.g.\ the connecting homomorphism arising
  from the short exact sequence $0\to J\to S\to S/J\to 0$.)
We have $\phi_{i,m}([b^{\beta}g])=[a^{t+1}b^{\beta}g]$.  

  To show that $[a^{t+1}b^{\beta}g]\in
  (\ker\Delta_{i-1,a^{t+1}I})(a^{t+1})_{m}$, it 
  suffices to show that $a^{t+1}b^{\beta}g$ is a boundary in both
  $a^{t+1}I\tensor {K}_{i-1}$ and $\sigma(a^{t+1}I)\tensor
  {K}_{i-1}$.  From the Taylor resolution of $I$, we know that
  $a^{t+1}I\tensor\mathbb{K}_{\bullet}$ is
  exact in multidegree $a^{t+1}m$.  Thus, since  $a^{t+1}b^{\beta}g$
  is a cycle in 
  $a^{t+1}I\tensor {K}_{i-1}$, it is a boundary as well.  Hence, we may
  write $a^{t+1}b^{\beta}g=D(h)$ for some $h\in
  a^{t+1}I\tensor{K}_{i}$.   

  Write $h=a^{s+t}b^{\beta-1}e_{a}\wedge e_{b}\wedge f_{1} +
  a^{s+t+1}b^{\beta-1}e_{b}\wedge f_{2} + a^{s+t}b^{\beta}e_{a}\wedge
  f_{3} + a^{s+t+1}b^{\beta}f_{4}$, for $f_{1},f_{2},f_{3},f_{4}\in
  \mathbb{K}_{\bullet}$ not involving $a$, $b$, $e_{a}$, or $e_{b}$.  
  Then, write
  $a^{s+t+1}b^{\beta} f_{4}$ (and, mutatis mutandis,
  $a^{s+t}b^{\beta}e_{a}\wedge f_{3}$) in the form  $\sum \alpha_{j}
  a^{s+t+1}b^{\beta}\gamma_{j}e_{\mu_{j}}$ for coefficients
  $\alpha_{j}\in \bbbk$ and  monomials
  $\gamma_{j}$ with $a^{s+t+1}b^{\beta}\gamma_{j}\in a^{t+1}I$, and
  hence 
  $a^{s}b^{\beta}\gamma_{j}\in I\cap (b^{\beta})=J\cap(b^{\beta})$.
  Adjusting $b^{\beta}g$ in 
  $\im(b^{\beta})_{*,i,J}$ if necessary, we may assume that
  $f_{3}=f_{4}=0$.  

  Thus, \begin{align*}
a^{t+1}b^{\beta}g&=D(h)\\
&= a^{s+t+1}b^{\beta-1}e_{b}\wedge f_{1}
  - a^{s+t}b^{\beta}e_{a}\wedge f_{1}
 + a^{s+t}b^{\beta-1}e_{a}\wedge
  e_{b}\wedge D(f_{1})\\
&\quad  + a^{s+t+1}b^{\beta}f_{2} -
  a^{s+t+1}b^{\beta-1}e_{b}\wedge D(f_{2}).
\end{align*}
  Since the left-hand side of this expression
  is divisible by $b^{\beta}$, it follows that both
  $a^{s+t+1}b^{\beta-1}e_{b}\wedge f_{1}  -
  a^{s+t+1}b^{\beta-1}e_{b}\wedge D(f_{2})$  and
  $a^{s+t}b^{\beta-1}e_{a}\wedge 
  e_{b}\wedge D(f_{1})$ are equal to zero, and, in particular,
  $f_{1}=D(f_{2})$ (and 
  $D(f_{1})=0$).  
  Thus,
\begin{align*}
  a^{t+1}b^{\beta}g&=a^{s+t+1}b^{\beta}f_{2}-a^{s+t}b^{\beta}e_{a}\wedge
  D(f_{2}) \\
&= D(a^{s+t}b^{\beta}e_{a}\wedge f_{2}).
\end{align*}

  We claim that this is a boundary in $\sigma(a^{t+1}I)\tensor
  {K}_{i-1}$.  Indeed, we may write $f_{2}$  in the form $\sum
  \alpha_{j}\gamma_{j}e_{\mu_{j}}$ with $a^{s+t+1}b^{\beta-1}\gamma_{j}\in
  a^{t+1}I$, i.e.,
  $a^{s}b^{\beta-1}\gamma_{j}=a^{s}b^{\ell+t}\gamma_{j}\in I$.  Then,
  since $I$ is $(a,b,t)$-shifted, we have
  $a^{\ell}b^{s+t}\gamma_{j}\in I$, so
  $a^{\ell+t+1}b^{s+t}\gamma_{j}\in a^{t+1}I$ and
  $a^{s+t}b^{\ell+t+1}\gamma_{j}=a^{s+t}b^{\beta}\gamma_{j}\in
  \sigma(a^{t+1}I)$.  Thus, $a^{s+t}b^{\beta}e_{a}\wedge f_{2}\in
  \sigma(a^{t+1}I)\tensor {K}_{i}$ as desired.
\end{proof}



\begin{corollary}\label{hard}
 For $m=fa^{s}b^{\beta}=fa^{s}b^{\ell+t+1}$, set
  $n=fa^{\ell}b^{s+t+1}$.  Then, for all $i$, one has
  $b_{i,m}(S/(J+b^{\beta}))+b_{i,n}(S/(J+b^{\beta}))\geq
  b_{i,m}(S/(I+b^{\beta}))+b_{i,n}(S/(I+b^{\beta}))$.   
\end{corollary}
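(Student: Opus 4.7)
The plan is to compare the Betti numbers of $S/(I+b^\beta)$ and $S/(J+b^\beta)$ via the mapping cone for multiplication by $b^\beta$, and then reconcile the resulting difference against what the Mayer--Vietoris analysis of Section~5 produces. First I would write out the long exact sequences in $\Tor$ arising from the short exact sequence $0 \to (S/(I:b^\beta))(-b^\beta) \xrightarrow{\cdot b^\beta} S/I \to S/(I+b^\beta) \to 0$ and the analogous one for $J$. Proposition~\ref{manyzero} eliminates the multiplication image entirely for $I$, and kills it for $J$ outside multidegrees of the form $fa^sb^\beta$ (so in particular in multidegree $n$). Cancelling the $(I:b^\beta)=(J:b^\beta)$ contribution via Corollary~\ref{samecolon}, the claimed inequality reduces to showing that
\[
[b_{i,m}(S/J)+b_{i,n}(S/J)] - [b_{i,m}(S/I)+b_{i,n}(S/I)] \;\geq\; \dim \im(b^\beta_{*,i,J})_m + \dim \im(b^\beta_{*,i-1,J})_m.
\]

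Next I would compute the left-hand side via the Mayer--Vietoris sequence applied to the pair $(a^{t+1}I,\sigma(a^{t+1}I))$ and to $(a^{t+1}J,\sigma(a^{t+1}J))$ in multidegree $a^{t+1}m$; here $a^{t+1}J = \Shift_{a,b}(a^{t+1}I)$ by Proposition~\ref{tshiftisshift}. Proposition~\ref{symmetry} makes the intersection and sum terms match for $I$ and $J$, while Proposition~\ref{mayervietoriskernel} together with the hypothesis $\ell\gneqq s$ kills the Mayer--Vietoris kernels for $a^{t+1}J$ in multidegree $a^{t+1}m$. After translating back to $S/\cdot$ Betti numbers (a shift of one in the homological index), the left-hand side equals
\[
\dim(\ker \Delta_{i-1,a^{t+1}I})_{a^{t+1}m} + \dim(\ker \Delta_{i-2,a^{t+1}I})_{a^{t+1}m}.
\]

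Finally, the remaining inequality comes down to the dimension bound $\dim \im(b^\beta_{*,j,J})_m \leq \dim(\ker \Delta_{j-1,a^{t+1}I})_{a^{t+1}m}$ for $j=i$ and $j=i-1$. This is exactly what Proposition~\ref{smaller} provides: combined with the isomorphism $\phi$ of Corollary~\ref{theisomorphism}, it asserts that $\phi$ embeds $\im(b^\beta_{*,j,J})_m$ into $(\ker \Delta_{j-1,a^{t+1}I})_{a^{t+1}m}$, so the dimension inequality is immediate. Summing the pieces then yields the corollary.

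The main obstacle is not conceptual but combinatorial: keeping the multigrading and homological index aligned as one passes between $I$ and $a^{t+1}I$, between $J$ and $\Shift_{a,b}(a^{t+1}I)$, between ideal-$\Tor$ and $S/\cdot$-$\Tor$, and between the mapping cone sequence and the Mayer--Vietoris sequence. The fact that the ``error'' $\dim\im(b^\beta_{*,\cdot,J})_m$ appearing in the mapping cone analysis is controlled by exactly the kernel $\dim(\ker\Delta_{\cdot,a^{t+1}I})_{a^{t+1}m}$ that the Mayer--Vietoris analysis already produces is the technical miracle that makes the whole argument close up; once the dictionary is in place, the three key inputs (Propositions~\ref{symmetry}, \ref{mayervietoriskernel}, and~\ref{smaller}, the last with Corollary~\ref{theisomorphism}) plug in cleanly.
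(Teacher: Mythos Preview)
Your proposal is correct and follows essentially the same route as the paper's proof: expand via the mapping cone, simplify with Proposition~\ref{manyzero} and Corollary~\ref{samecolon}, then multiply by $a^{t+1}$ and apply Mayer--Vietoris with Propositions~\ref{tshiftisshift}, \ref{symmetry}, and~\ref{mayervietoriskernel} to reduce to exactly the dimension comparison that Proposition~\ref{smaller} (via the isomorphism of Corollary~\ref{theisomorphism}) provides. Your bookkeeping of the homological indices---in particular the shift to $i-1$ and $i-2$ on the $\ker\Delta$ terms when passing from ideal Betti numbers to $S/\cdot$ Betti numbers---matches the paper exactly.
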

\begin{proof}
The computation below appears daunting, but it is in fact merely
long.  The moral is that, by Proposition \ref{smaller}, the
flexibility  in the paired multidegrees $m$ and
$n$ (given by $(\ker \Delta_{\bullet,
  a^{t+1}I})_{a^{t+1}m}$) is larger than the obstruction coming from the
cancellation in the mapping cone (given by $\im
b^{\beta}_{*,\bullet,J}$).  

Set $A=b_{i,m}(S/(J+b^{\beta}))+b_{i,n}(S/(J+b_{\beta})) -
b_{i,m}(S/(I+b^{\beta})) - b_{i,n}(S/(I+b^{\beta}))$.  We will show that
$A$ is nonnegative.  

Expanding each term of $A$ with the mapping cone, we have

\begin{align*}
A&=\left(b_{i,m}\left(\frac{S}{J}\right) +
b_{i-1,b^{-\beta}m}\left(\frac{S}{(J:b^{\beta})}\right) -
\dim_{\bbbk}\left(\im b^{\beta}_{*,J,i}\right)_{m} - \dim_{\bbbk}\left(\im
b^{\beta}_{*,J,i-1}\right)_{m}\right)\\
&\quad + \left(b_{i,n}\left(\frac{S}{J}\right) +
b_{i-1,b^{-\beta}n}\left(\frac{S}{(J:b^{\beta})}\right) -
\dim_{\bbbk}\left(\im b^{\beta}_{*,J,i}\right)_{n} - \dim_{\bbbk}\left(\im
b^{\beta}_{*,J,i-1}\right)_{n}\right)\\
&\quad - \left(b_{i,m}\left(\frac{S}{I}\right) +
b_{i-1,b^{-\beta}m}\left(\frac{S}{(I:b^{\beta})}\right) -
\dim_{\bbbk}\left(\im b^{\beta}_{*,I,i}\right)_{m} - \dim_{\bbbk}\left(\im
b^{\beta}_{*,I,i-1}\right)_{m}\right)\\
&\quad - \left(b_{i,n}\left(\frac{S}{I}\right) +
b_{i-1,b^{-\beta}n}\left(\frac{S}{(I:b^{\beta})}\right) -
\dim_{\bbbk}\left(\im b^{\beta}_{*,I,i}\right)_{n} - \dim_{\bbbk}\left(\im
b^{\beta}_{*,I,i-1}\right)_{n}\right).
\end{align*}
By Proposition \ref{manyzero}, most of these images are empty, and by
Corollary \ref{samecolon}, the Betti numbers of the colon ideals all
cancel.  We are left with

\begin{align*}
A&=\left(b_{i,m}\left(\frac{S}{J}\right) +
b_{i,n}\left(\frac{S}{J}\right)\right) -
\left(b_{i,m}\left(\frac{S}{I}\right) +
b_{i,n}\left(\frac{S}{I}\right)\right)\\
&\quad - \dim_{\bbbk}\left(\im b^{\beta}_{*,J,i}\right)_{m} - \dim_{\bbbk}\left(\im
b^{\beta}_{*,J,i-1}\right)_{m}.
\end{align*}
We multiply the ideals by $a^{t+1}$ (replacing
$b_{i,m}\left(\frac{S}{J}\right)$ with
$b_{i,a^{t+1}m}\left(\frac{S}{a^{t+1}J}\right)$, etc.), and then
expand again with the Mayer-Vietoris sequence, yielding

\begin{align*}
A&=\bigg[b_{i,a^{t+1}m}\left(\frac{S}{a^{t+1}J\cap\sigma(a^{t+1}J)}\right)
+ b_{i,a^{t+1}m}\left(\frac{S}{a^{t+1}J + \sigma(a^{t+1}J)}\right)\\
&\qquad\qquad\qquad  - \dim_{\bbbk}\left(\ker
\Delta_{i-1,a^{t+1}J}\right)_{a^{t+1}m} -\dim_{\bbbk}\left(\ker 
\Delta_{i-2,a^{t+1}J}\right)_{a^{t+1}m}\bigg]\\
&\quad -
\bigg[b_{i,a^{t+1}m}\left(\frac{S}{a^{t+1}I\cap\sigma(a^{t+1}I)}\right) 
+ b_{i,a^{t+1}m}\left(\frac{S}{a^{t+1}I + \sigma(a^{t+1}I)}\right)\\
&\qquad\qquad\qquad  - \dim_{\bbbk}\left(\ker
\Delta_{i-1,a^{t+1}I}\right)_{a^{t+1}m} -\dim_{\bbbk}\left(\ker 
\Delta_{i-2,a^{t+1}I}\right)_{a^{t+1}m}\bigg]\\
&\quad - \dim_{\bbbk}\left(\im b^{\beta}_{*,J,i}\right)_{m} - \dim_{\bbbk}\left(\im
b^{\beta}_{*,J,i-1}\right)_{m}.
\end{align*}
The remaining Betti numbers cancel by Propositions \ref{tshiftisshift} and
\ref{symmetry}, and 
the first two kernels are empty by Proposition
\ref{mayervietoriskernel}.  We are left with

\begin{align*}
A&=\left(\dim_{\bbbk}\left(\ker
\Delta_{i-1,a^{t+1}I}\right)_{a^{t+1}m} - \dim_{\bbbk}\left(\im 
b^{\beta}_{*,J,i}\right)_{m}\right)\\ 
&\quad +\left(\dim_{\bbbk}\left(\ker
\Delta_{i-2,a^{t+1}I}\right)_{a^{t+1}m} - \dim_{\bbbk}\left(\im 
b^{\beta}_{*,J,i-1}\right)_{m}\right).
\end{align*}
By Proposition \ref{smaller}, each of these summands is nonnegative.
\end{proof}

\begin{proposition}\label{shiftpluspbetti}
For all $i,j$, one has $b_{i,j}(J+b^{\beta})\geq
b_{i,j}(I+b^{\beta})$.  
\end{proposition}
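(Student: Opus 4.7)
The plan is to decompose each graded Betti number as a sum of multigraded Betti numbers and then match these multigraded pieces between $I+b^{\beta}$ and $J+b^{\beta}$ along the partition already introduced in formula (\ref{thesummation}). Concretely, I would write
\begin{equation*}
b_{i,j}(J+b^{\beta}) = \sum_{\deg m=j} b_{i,m}(J+b^{\beta}),
\end{equation*}
and similarly for $I+b^{\beta}$, then group the summands into three classes: (a) multidegrees that are not of the form $fa^{s}b^{\ell+t+1}$ or $fa^{\ell}b^{s+t+1}$ with $s<\ell$; (b) paired multidegrees $\{fa^{s}b^{\ell+t+1}, fa^{\ell}b^{s+t+1}\}$ with $s<\ell$ and $\ell+t+1\neq\beta$; and (c) the same paired multidegrees but with $\ell+t+1=\beta$.

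The next step is to apply the preceding results of this section. For class (a), Lemma \ref{easy}(1) gives the pointwise inequality $b_{i,m}(J+b^{\beta})\geq b_{i,m}(I+b^{\beta})$. For class (b), Lemma \ref{easy}(2) gives the combined pairwise inequality $b_{i,m}+b_{i,n}$. For class (c), Corollary \ref{hard} gives the combined pairwise inequality, after translating between ideal and quotient Betti numbers via $b_{i+1,j}(S/M)=b_{i,j}(M)$. Adding these partial inequalities over all degree-$j$ multidegrees produces the claim.

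The substantive work has already been carried out before this proposition is stated; the proposition itself is the bookkeeping step that assembles Lemma \ref{easy} and Corollary \ref{hard} via (\ref{thesummation}). The one nontrivial point to verify is that the three classes above exhaust and partition the degree-$j$ multidegrees, but this is clear from the shapes $fa^{s}b^{r}$ with $r<t+1$, $fa^{s}b^{s+t+1}$, and $fa^{s}b^{\ell+t+1}$ with $s<\ell$ together with their $\sigma$-images. The genuine obstacle was Corollary \ref{hard}, where the pure power $b^{\beta}$ creates extra mapping-cone cancellations for $J+b^{\beta}$ absent from $I+b^{\beta}$; once those have been controlled by the kernel of the Mayer--Vietoris map $\Delta_{\bullet,a^{t+1}I}$ via Proposition \ref{smaller}, nothing further is required.
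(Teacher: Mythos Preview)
Your proposal is correct and follows essentially the same approach as the paper: decompose $b_{i,j}$ via formula~(\ref{thesummation}) into three classes of multidegrees, then apply Lemma~\ref{easy} to the first two and Corollary~\ref{hard} to the third. The paper's proof is exactly this bookkeeping step, stated in two sentences.
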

\begin{proof}
For a monomial $m$ of the form $m=fa^{s}b^{\ell+t+1}$, set
$n=fa^{\ell}b^{s+t+1}$.  We recall formula (\ref{thesummation}),
\begin{align*}
b_{i,j}(J+b^{\beta})&=\hspace{-15pt}\sum_{\substack{m\neq
    fa^{s}b^{\ell+t+1}\\m\neq fa^{\ell}b^{s+t+1}}}\hspace{-13pt}
    b_{i,m}(J+b^{\beta}) + 
\hspace{-15pt}\sum_{\substack{m=fa^{s}b^{\ell+t+1}\\ \ell+t+1\neq
  \beta}}\hspace{-13pt}\left(b_{i,m}(J+b^{\beta})+b_{i,n}(J+b^{\beta})\right)\\
&\quad + 
\hspace{-15pt}\sum_{\substack{m=fa^{s}b^{\ell+t+1}\\
    \ell+t+1=\beta}}\hspace{-13pt}
    \left(b_{i,m}(J+b^{\beta})+b_{i,n}(J+b^{\beta})\right),   
\end{align*}
and similarly for $I$.  By Lemma \ref{easy}, the inequality holds for
the first two sums, and by Corollary \ref{hard}, it holds for the third.
\end{proof}

\section{Strongly shifted ideals and compression}

Let $J$ be an $(a,b)$-strongly shifted ideal, none of whose generators
is divisible by $b^{\beta}$, and suppose further that $J$ contains
$a^{\alpha}$ for some $\alpha\leq \beta$.  Let $T$ be the
$\{a,b\}$-compression of $J$.  We study the Betti numbers of $J$ and
$T$.  We continue to denote by $f$  a monomial not
divisible by $a$ or $b$.

The following observations are immediate:

\begin{lemma}\label{compressionstuff}\  
\begin{itemize}
\item[(i)] If $T\neq J$, then $T$ is reverse lexicographically greater
  than $J$.
\item[(ii)] $fa^{r}b^{s}\in T$ if and only if $J$ contains at least $s+1$
  monomials of the form $fa^{p}b^{q}$ with $p+q=r+s$.
\end{itemize}
\end{lemma}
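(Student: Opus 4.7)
The plan is to unwind the definition of the $\{a,b\}$-compression. Decompose $J$ as a $\bbbk$-vector space, $J=\bigoplus_{f} f V_{f}$, with $f$ ranging over monomials not divisible by $a$ or $b$ and $V_{f}$ an ideal of $\bbbk[a,b]$; then $T=\bigoplus_{f} f W_{f}$, where $W_{f}$ is the unique lex ideal of $\bbbk[a,b]$ having the same Hilbert function as $V_{f}$. The one observation to record up front is that in the two-variable ring $\bbbk[a,b]$ the lex and reverse-lex orderings agree on every graded piece: the decreasing chain in degree $d$ is $a^{d} > a^{d-1}b > \cdots > b^{d}$ in both orders. Consequently $(W_{f})_{d}$ is spanned by the reverse-lex-largest $\dim(V_{f})_{d}$ monomials of degree $d$ in $\bbbk[a,b]$.

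Part (ii) is then a direct count. The monomial $fa^{r}b^{s}$ lies in $T$ iff $a^{r}b^{s}$ lies in $W_{f}$. Since $a^{r}b^{s}$ is precisely the $(s{+}1)$-st largest degree-$(r{+}s)$ monomial of $\bbbk[a,b]$, this holds iff $\dim(W_{f})_{r+s}\geq s+1$, which by construction equals $\dim(V_{f})_{r+s}$. Translating back to $J$, this is exactly the condition that $J$ contain at least $s+1$ monomials of the form $fa^{p}b^{q}$ with $p+q=r+s$.

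For part (i), fix a degree $d$. For each $f$, pair the degree-$d$ monomials of $T$ supported on $f$ (the rev-lex-largest $\dim(V_{f})_{d-\deg f}$ monomials of that form, by the observation above) with those of $J$ supported on $f$, matching them in decreasing reverse-lex order. Within each fiber, the $k$-th monomial of $T$ is reverse-lex at least the $k$-th monomial of $J$, since the $T$-fiber is the rev-lex top of its Hilbert function. Gluing these fiberwise bijections yields a bijection $\phi\colon\{T_{d}\}\to\{J_{d}\}$ with $t\geq_{\mathrm{rev}}\phi(t)$ for all $t$, and Lemma \ref{2no1} delivers $\{T_{d}\}\geq_{\mathrm{rev}}\{J_{d}\}$ for every $d$. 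If $T\neq J$ then $\{T_{d}\}\neq\{J_{d}\}$ for some $d$, and since compression preserves Hilbert functions (Theorem \ref{compressionbetti}(ii)) these sets have equal cardinality; the $\geq_{\mathrm{rev}}$ relation is a total order on equinumerous monomial sets, so this forces strict rev-lex inequality in that degree, whence $T>_{\mathrm{rev}}J$.

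The hypotheses that $J$ is $(a,b)$-strongly shifted, contains $a^{\alpha}$, and has no generator divisible by $b^{\beta}$ play no role in this lemma — they are context for its later uses. The only small bookkeeping step is the passage from the per-fiber comparison to the global comparison, which is handled cleanly by Lemma \ref{2no1}; there is no serious obstacle, as the paper's own phrase ``the following observations are immediate'' indicates.
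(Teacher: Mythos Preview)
Your argument is correct and is exactly the unwinding of definitions the paper has in mind when it says ``the following observations are immediate.'' The key observation---that lex and revlex coincide on each graded piece of $\bbbk[a,b]$---makes both parts transparent, and your use of Lemma~\ref{2no1} to pass from the fiberwise comparison to the global one is the right bookkeeping device.
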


\begin{lemma}\label{compressionlemma}
The following are equivalent:
\begin{itemize}
\item[(i)] $fa^{r}b^{\beta}\in J$.
\item[(ii)] $fa^{r}b^{\beta-1}\in J$.
\item[(iii)] $fa^{p}b^{q}\in J$ for all $p,q$ such that
  $p+q=r+\beta-1$ and $q\lneqq\beta$.  
\item[(iv)] $fa^{r}b^{\beta-1}\in T$.
\item[(v)] $fa^{r}b^{\beta}\in T$.
\end{itemize}
\end{lemma}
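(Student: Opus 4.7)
The plan is to establish the five equivalences via the cycle (i) $\Rightarrow$ (ii) $\Rightarrow$ (iii) $\Rightarrow$ (iv) $\Rightarrow$ (v) $\Rightarrow$ (i), together with the immediate reverse (iii) $\Rightarrow$ (ii). Most of the arrows are routine; the real work is in the two ``cardinality-to-membership'' directions, (iv) $\Rightarrow$ (iii) and (v) $\Rightarrow$ (i), which use the full strength of the hypotheses on $J$.

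The routine directions are as follows. For (i) $\Leftrightarrow$ (ii): if $fa^{r}b^{\beta} \in J$, any minimal generator of $J$ dividing it cannot be divisible by $b^{\beta}$, so it already divides $fa^{r}b^{\beta-1}$; the reverse is immediate from $J$ being an ideal. For (ii) $\Rightarrow$ (iii), fix any $(p,q)$ with $p + q = r+\beta-1$ and $q < \beta$ (equivalently, $p \geq r$), and split into three subcases: if $p = r$ there is nothing to prove; if $r < p \leq \beta-1$, apply the $(a,b,t)$-shifted property to $fa^{r}b^{\beta-1}$ with $s = r$, $\ell = p$, and $t = \beta-1-p$; and if $p \geq \beta$, use $\alpha \leq \beta \leq p$, so $fa^{p}b^{q}$ is a multiple of $a^{\alpha} \in J$. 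The converse (iii) $\Rightarrow$ (ii) is the case $q = \beta-1$. Implication (iii) $\Rightarrow$ (iv) follows from Lemma \ref{compressionstuff}(ii) applied with $s = \beta-1$: the $\beta$ monomials supplied by (iii) are exactly what is needed. Finally, (iv) $\Rightarrow$ (v) is immediate from $T$ being an ideal (Theorem \ref{compressionbetti}(i)), since $fa^{r}b^{\beta} = b \cdot fa^{r}b^{\beta-1}$.

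The nontrivial step is closing the cycle: showing (v) $\Rightarrow$ (i), and symmetrically (iv) $\Rightarrow$ (iii) through (iv) $\Rightarrow$ (ii). For the latter, set $P = \{p : fa^{p}b^{r+\beta-1-p} \in J\}$; by Lemma \ref{compressionstuff}(ii), $|P| \geq \beta$. I plan to show $r \in P$ by a case analysis on $m := \min P$. If $m \geq r$, the constraint $p \leq r+\beta-1$ (required for $q \geq 0$) gives $|P| \leq r+\beta-m$, forcing $m \leq r$, hence $m = r$. If $m < r$ and $m \leq \beta-1$, then strong shifting applied to $fa^{m}b^{r+\beta-1-m} \in J$, with $s = m$ and $\ell$ varying over $(m, r+\beta-1-m]$, yields $[m, r+\beta-1-m] \subseteq P$, an interval containing $r$ precisely because $m \leq \beta-1$. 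If $m < r$ and $m \geq \beta$, then $\alpha \leq \beta \leq m < r$ forces $\alpha \leq r$, so every $p \geq \alpha$ lies in $P$ via $a^{\alpha} \in J$; in particular $r \in P$. An entirely parallel argument with $P' = \{p : fa^{p}b^{r+\beta-p} \in J\}$ and $|P'| \geq \beta+1$ proves (v) $\Rightarrow$ (i) and closes the cycle. The main obstacle is precisely this case analysis: the subcases $m \leq \beta-1$ and $m \geq \beta$ require respectively the strong shifting hypothesis and the presence of $a^{\alpha}$ with $\alpha \leq \beta$, and both are essential to convert a mere cardinality bound into the specific membership statement.
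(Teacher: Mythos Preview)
Your proof is correct, and the routine directions together with (ii)$\Rightarrow$(iii) match the paper's argument almost exactly (the paper splits at $p\geq\alpha$ versus $p<\alpha$ rather than at $\beta$, but this is immaterial). The genuine difference is in (v)$\Rightarrow$(i). You run a three-case analysis on $m'=\min P'$, invoking strong shifting in the middle case and the hypothesis $a^{\alpha}\in J$ in the last, and you describe these two hypotheses as ``essential'' for this implication. The paper's argument is considerably shorter and uses neither: among the $\beta+1$ monomials $fa^{p}b^{q}\in J$ with $p+q=r+\beta$, at most $\beta$ can have $q<\beta$, so by pigeonhole some $fa^{P}b^{Q}\in J$ has $Q\geq\beta$ and hence $P\leq r$; then, since no minimal generator of $J$ is divisible by $b^{\beta}$, we get $fa^{P}b^{\beta}\in J$, and $P\leq r$ gives $fa^{r}b^{\beta}\in J$. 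Thus for the paper only the ``no generator divisible by $b^{\beta}$'' hypothesis is doing work in this step, while the strong shifting and $a^{\alpha}\in J$ hypotheses are used solely in (ii)$\Rightarrow$(iii). Your approach is valid but heavier; it also makes your separate (iv)$\Rightarrow$(ii) argument redundant, since the cycle already closes once (v)$\Rightarrow$(i) is established.
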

\begin{proof}
(iii)$\implies$(ii)$\implies$(i) is obvious, as is
  (iii)$\implies$(iv)$\implies$(v), and (i)$\implies$(ii) is 
  immediate by construction.  We will show that (ii) implies (iii) and
  (v) implies (i).

Suppose (ii) holds.  If $p\geq\alpha$, then $fa^{p}b^{q}\in J$ because
$a^{\alpha}\in J$. 
Otherwise, note that $p\geq r$, since $p-r=\beta-1-q\geq 0$ by
assumption.  Set $t=\beta-1-p$, which is nonnegative since
$\beta-1\geq \alpha-1\geq p$.  Since $J$
is $(a,b,t)$-shifted and $fa^{r}b^{\beta-1}=fa^{r}b^{t+p}\in
J$, we have 
$fa^{p}b^{t+r}=fa^{p}b^{q}\in J$.  Thus (iii) holds.

Now suppose (v) holds.  Then, by Lemma \ref{compressionstuff} (ii),
$J$ contains 
at least $\beta+1$ monomials of the form $fa^{p}b^{q}$, with
$p+q=\beta+r$.  By the pigeonhole principle, one of these, say
$fa^{P}b^{Q}$, has $Q\geq \beta$ and $P\leq r$.  By construction,
then, $J$ contains $fa^{P}b^{\beta}$ and so we have
$fa^{r}b^{\beta}\in J$, and (i) is satisfied. 
\end{proof}

\begin{corollary}\label{nocancellation}
No minimal monomial generator of $T$ is divisible by $b^{\beta}$.
\end{corollary}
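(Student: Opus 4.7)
The plan is to argue by contradiction: suppose $m=fa^{r}b^{\gamma}$ is a minimal monomial generator of $T$ with $\gamma\geq\beta$. Writing $J_{f,d}=\{fa^{p}b^{q}\in J:p+q=d\}$, the hypothesis $m\in T$ together with Lemma \ref{compressionstuff}(ii) gives $|J_{f,r+\gamma}|\geq\gamma+1$, while minimality forces $fa^{r}b^{\gamma-1}\notin T$ and hence $|J_{f,r+\gamma-1}|\leq\gamma-1$. My goal is to produce $\gamma$ elements of $J_{f,r+\gamma-1}$ in order to contradict the second bound.

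First I would dispatch the case $\gamma=\beta$ immediately: the equivalence (iv)$\iff$(v) of Lemma \ref{compressionlemma} yields $fa^{r}b^{\beta-1}\in T$, contradicting minimality. So assume $\gamma>\beta$.

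Since $|J_{f,r+\gamma}|\geq\gamma+1$ but only $\gamma$ of the monomials $fa^{p}b^{r+\gamma-p}$ have $p>r$, there exists $p_{*}\leq r$ with $fa^{p_{*}}b^{r+\gamma-p_{*}}\in J$. Its $b$-exponent $r+\gamma-p_{*}\geq\gamma>\beta$, so this monomial is not itself a minimal generator of $J$; any minimal generator $g$ of $J$ dividing it has $b$-exponent strictly less than $\beta$, and hence $g$ also divides $fa^{p_{*}}b^{r+\gamma-p_{*}-1}$, placing this latter monomial in $J$ as well. Independently, $a^{\alpha}\in J$ gives $[\alpha,r+\gamma-1]\subseteq\{p:fa^{p}b^{r+\gamma-1-p}\in J\}$.

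To finish, I would split on whether $r\geq\alpha$. If $r\geq\alpha$, the interval $[\alpha,r+\gamma-1]$ alone supplies $r+\gamma-\alpha\geq\gamma$ elements of $J_{f,r+\gamma-1}$. If $r<\alpha$, then $r<\alpha\leq\beta<\gamma$ forces $\gamma\geq r+2$, so $p_{*}\leq r<\gamma-1\leq r+\gamma-p_{*}-1$; strong shifting applied to $fa^{p_{*}}b^{r+\gamma-p_{*}-1}$ then fills $[p_{*},r+\gamma-p_{*}-1]$ into the $p$-support of $J_{f,r+\gamma-1}$, and this interval joins $[\alpha,r+\gamma-1]$ (since $r+\gamma-p_{*}-1\geq\alpha-1$) to cover $[p_{*},r+\gamma-1]$, of size $\geq\gamma$. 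Either way $|J_{f,r+\gamma-1}|\geq\gamma$, the desired contradiction. The main obstacle is securing the strict inequality $p_{*}<r+\gamma-p_{*}-1$ that makes strong shifting non-trivial; this fails precisely at $\gamma=\beta$, which is why that boundary case must be handled separately via Lemma \ref{compressionlemma}.
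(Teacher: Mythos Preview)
Your proof is correct, but it is considerably longer than the paper's. The paper observes that Lemma~\ref{compressionlemma} is \emph{parametric} in $\beta$: the only hypothesis on $\beta$ used in its proof is that no minimal generator of $J$ is divisible by $b^{\beta}$, and this property is inherited by every $d\geq\beta$. Hence for any $d\geq\beta$ one may invoke Lemma~\ref{compressionlemma} with $d$ in place of $\beta$, and the equivalence (v)$\iff$(iv) immediately yields $fa^{r}b^{d}\in T\implies fa^{r}b^{d-1}\in T$, so no such $m$ can be a minimal generator. That is the entire argument.

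Your proof handles $\gamma=\beta$ exactly this way, but then for $\gamma>\beta$ you rebuild the implication (v)$\implies$(iv) from scratch: you unpack the compression count from Lemma~\ref{compressionstuff}(ii), locate a monomial with small $a$-exponent by pigeonhole, strip a factor of $b$ using the bound on generators of $J$, and then use strong shifting and $a^{\alpha}\in J$ to fill the interval $[p_{*},r+\gamma-1]$. All of these steps are sound, and in fact they closely mirror the proof of Lemma~\ref{compressionlemma} itself (your pigeonhole step is the paper's (v)$\implies$(i), your $b$-stripping is (i)$\implies$(ii), and your interval-filling is (ii)$\implies$(iii)$\implies$(iv)). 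The case split on $\gamma=\beta$ versus $\gamma>\beta$ is unnecessary once you notice that Lemma~\ref{compressionlemma} already applies at every level $d\geq\beta$.
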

\begin{proof}
Suppose that $T$ contains a monomial $m$ of the form
$fa^{r}b^{d}$ with $d\geq \beta$.  
No minimal monomial generator of $J$ is divisible by $b^{d}$, so Lemma
\ref{compressionlemma} applies with $d$ in place of $\beta$, and we
have $fa^{r}b^{d-1}\in T$.  Thus, $m$ is not a minimal
generator of $T$.
\end{proof}

\begin{corollary}\label{compressionsamecolon}
We have $T\cap (b^{\beta})=J\cap(b^{\beta})$ and
$(T:b^{\beta})=(J:b^{\beta})$.  
\end{corollary}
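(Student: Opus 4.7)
The plan is to derive both equalities directly from Lemma \ref{compressionlemma}, which already contains the essential comparison between membership in $J$ and $T$ at the relevant multidegrees. In particular, Corollary \ref{nocancellation} is a special case of what I will use, and the present corollary is really just the full strength of Lemma \ref{compressionlemma} repackaged.

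First I would prove $T \cap (b^\beta) = J \cap (b^\beta)$. Since both sides are monomial ideals, it suffices to show that for every monomial $m$ divisible by $b^\beta$, one has $m \in J$ if and only if $m \in T$. Write $m = f a^r b^d$ with $f$ not divisible by $a$ or $b$ and $d \geq \beta$. The hypothesis that no minimal generator of $J$ is divisible by $b^\beta$ implies that none is divisible by $b^d$ either, so the hypotheses of Lemma \ref{compressionlemma} are satisfied with $d$ substituted for $\beta$ (we still have $\alpha \leq \beta \leq d$, $J$ is strongly shifted, and $J$ contains $a^\alpha$). The equivalence (i)$\iff$(v) of that lemma, applied to the integer $d$, gives precisely $f a^r b^d \in J \iff f a^r b^d \in T$, and the first equality follows.

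For the second equality, the key observation is the elementary identity: for any monomial ideal $N$ and any monomial $u$, one has $u \in (N : b^\beta) \iff u b^\beta \in N \iff u b^\beta \in N \cap (b^\beta)$. Applying this both to $N = T$ and to $N = J$, and invoking the first equality already proved, shows that $(T : b^\beta)$ and $(J : b^\beta)$ contain exactly the same monomials, hence are equal as monomial ideals.

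I do not foresee a serious obstacle here: the real work was already done in Lemma \ref{compressionlemma} and Corollary \ref{nocancellation}, which handle the degree-by-degree membership analysis. The only small point worth being careful about is the silent extension of Lemma \ref{compressionlemma} from the fixed integer $\beta$ to arbitrary $d \geq \beta$; but the hypotheses transfer without change since having no generators divisible by $b^\beta$ automatically precludes generators divisible by any higher power of $b$.
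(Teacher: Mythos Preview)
Your proof is correct and follows essentially the same route as the paper: both reduce the question to Lemma \ref{compressionlemma} applied monomial-by-monomial for exponents $d\geq\beta$ on $b$. The only cosmetic difference is that the paper routes the equivalence through the intermediate exponent $\beta-1$ (using (ii)$\iff$(iv) of Lemma \ref{compressionlemma} together with Corollary \ref{nocancellation}), whereas you apply (i)$\iff$(v) of the lemma directly with $d$ in place of $\beta$; since the paper already makes exactly this substitution in proving Corollary \ref{nocancellation}, the two arguments are interchangeable.
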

\begin{proof}
For any $q\geq\beta$, one has $fa^{p}b^{q}\in J$ if and only if
$fa^{p}b^{\beta-1}\in J$, if and only if $fa^{p}b^{\beta-1}\in T$, if
and only if $fa^{p}b^{q}\in T$.  
\end{proof}

\begin{proposition}\label{compressedpluspbetti}
$b_{i,j}(T+b^{\beta})\geq b_{i,j}(J+b^{\beta})$ for all $i,j$.
\end{proposition}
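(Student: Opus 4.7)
The plan is to use the same mapping-cone strategy as in the proof of Proposition \ref{shiftpluspbetti}, but the argument should be substantially shorter because both $J$ and $T$ are nicely behaved with respect to $b^{\beta}$. Specifically, we have by hypothesis that no minimal generator of $J$ is divisible by $b^{\beta}$, and by Corollary \ref{nocancellation} the same holds for $T$. The Taylor resolution therefore shows that the multigraded Betti numbers of both $S/J$ and $S/T$ are concentrated in multidegrees not divisible by $b^{\beta}$.

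Next, I would set up the mapping-cone long exact sequence in Tor arising from
\[
0 \to S/(J:b^{\beta})(b^{-\beta}) \xrightarrow{b^{\beta}} S/J \to S/(J+b^{\beta}) \to 0,
\]
and the analogous one for $T$. The image of multiplication by $b^{\beta}$ lands in multidegrees divisible by $b^{\beta}$, so by the Taylor-resolution observation above the maps $b^{\beta}_{*,i,J}$ and $b^{\beta}_{*,i,T}$ are identically zero (this is the exact analogue of Proposition \ref{manyzero}). The two long exact sequences therefore collapse into short split sequences, yielding for every $i,j$
\begin{align*}
b_{i,j}(T+b^{\beta}) &= b_{i,j}(T) + b_{i-1,j-\beta}(S/(T:b^{\beta})),\\
b_{i,j}(J+b^{\beta}) &= b_{i,j}(J) + b_{i-1,j-\beta}(S/(J:b^{\beta})).
\end{align*}

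By Corollary \ref{compressionsamecolon} we have $(T:b^{\beta})=(J:b^{\beta})$, so the colon-ideal contributions cancel from the comparison, leaving us to prove $b_{i,j}(T)\geq b_{i,j}(J)$. But this is exactly part (iii) of Theorem \ref{compressionbetti}, since $T$ is by definition the $\{a,b\}$-compression of $J$. Adding the equal colon contributions to both sides concludes the proof.

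The routine part is organizing the mapping-cone bookkeeping; the only real step requiring thought is verifying the vanishing of $\im b^{\beta}_{*,i,T}$, which in turn depends precisely on Corollary \ref{nocancellation} (that compression preserves the property of having no generator divisible by $b^{\beta}$). Once one notices that hypothesis, the proof is essentially a one-line reduction to the known compression inequality, in stark contrast to the much more delicate argument needed for Proposition \ref{shiftpluspbetti}, where the shift could create new generators divisible by $b^{\beta}$.
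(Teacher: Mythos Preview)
Your proposal is correct and follows essentially the same argument as the paper's own proof: both use the mapping cone on $b^{\beta}$, invoke Corollary \ref{nocancellation} and the Taylor resolution to see there is no cancellation, apply Corollary \ref{compressionsamecolon} to match the colon contributions, and finish with Theorem \ref{compressionbetti}(iii). The only blemish is a harmless index slip in your displayed formulas: you mix Betti numbers of ideals on the left with a quotient term $b_{i-1,j-\beta}(S/(T:b^{\beta}))$ on the right; if you work consistently with $S/J$ and $S/T$ (as the paper does) or adjust the index to $b_{i,j-\beta}$, the formulas line up.
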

\begin{proof}
Both $T+b^{\beta}$ and $J+b^{\beta}$ are resolved by the mapping
cone of $b^{\beta}$, via the short exact sequences
\[
0\to S/(J:b^{\beta})(b^{-\beta})\to S/J\to S/(J+b^{\beta})\to 0
\]
and
\[
0\to S/(T:b^{\beta})(b^{-\beta})\to S/T\to S/(T+b^{\beta})\to 0.
\]

By construction (for $J$) and Corollary \ref{nocancellation} (for
$T$), neither $J$ nor $T$ has any minimal generators divisible by
$b^{\beta}$, so, by the Taylor resolution, their multigraded Betti
numbers are concentrated in multidegrees not divisible by
$b^{\beta}$.  Thus, there is no cancellation in either mapping cone,
and we have
\begin{align*}
b_{i,j}(S/(J+b^{\beta}))&=b_{i,j}(S/J)+b_{i-1,j-\beta}(S/(J:b^{\beta}))\\
b_{i,j}(S/(T+b^{\beta}))&=b_{i,j}(S/T)+b_{i-1,j-\beta}(S/(T:b^{\beta})).
\end{align*}
By Theorem \ref{compressionbetti}, we have $b_{i,j}(S/T)\geq
b_{i,j}(S/J)$, and by Corollary \ref{compressionsamecolon},
$(J:b^{\beta})=(T:b^{\beta})$.  
\end{proof}

\section{The monomial case of the lex-plus-powers conjecture}
In this section, we put everything together to prove the monomial case of the
lex-plus-powers conjecture in arbitrary characteristic: 

\begin{theorem}\label{lpp}
Suppose $I$ is a homogeneous ideal containing a regular sequence of
monomials 
$(f_{1},\cdots,f_{r})$ in degrees $e_{1}\leq\cdots\leq
e_{r}$.  Put 
$P=(x_{1}^{e_{1}},\cdots, x_{r}^{e_{r}})$.  Then there exists a lex-plus-$P$
ideal $L$ such that $L$ has the same Hilbert function as $I$ and
$b_{i,j}(L)\geq b_{i,j}(I)$ for all $i,j$.
\end{theorem}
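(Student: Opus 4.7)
The plan is to imitate the two-step structure of the characteristic-zero proof (Theorem \ref{lppcharzero}), but substituting the combinatorial shifting of Section 5 and the compression of Section 7 for the generic change of variables. First, Lemma \ref{anymonregseq} reduces to the case that $F=P$ and $I$ is a monomial ideal containing $P$. By Theorem \ref{boreltolex}, it then suffices to produce a Borel-plus-$P$ ideal $B$ with $\Hilb(B)=\Hilb(I)$ and $b_{i,j}(B)\geq b_{i,j}(I)$ for all $i,j$.

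To produce $B$, I would run a walk on the set of monomial ideals containing $P$, analogous to Proposition \ref{toborel}. If $I$ is not Borel-plus-$P$, Theorem \ref{compressionbetti}(iv) supplies a pair of variables $a >_{\mathrm{lex}} b$ for which $I$ is not $\{a,b\}$-compressed-plus-$P$. I would then replace $I$ by a new monomial ideal $J \supseteq P$ satisfying $\Hilb(J)=\Hilb(I)$, $b_{i,j}(J)\geq b_{i,j}(I)$, and $J$ strictly greater than $I$ in the reverse lexicographic order. Since there are only finitely many monomial ideals with a given Hilbert function, the walk must terminate at a Borel-plus-$P$ ideal, at which point Theorem \ref{boreltolex} concludes.

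For the construction of $J$, let $\beta = e_b$ if $b=x_j$ with $j\leq r$, and $\beta=\infty$ otherwise. Write $I = I^{\circ}+(b^{\beta})$, where $I^{\circ}$ is generated by those minimal generators of $I$ not divisible by $b^{\beta}$; then $I^{\circ}$ has no generator divisible by $b^{\beta}$ and still contains every pure power in $P$ other than $b^{e_b}$. I would first apply a carefully-ordered sequence of shifts $\shift_{a,b,t}$ to $I^{\circ}$, using Proposition \ref{shiftpluspbetti} iteratively, to produce an $(a,b)$-strongly shifted ideal $I^{\circ\circ}$ still having no generator divisible by $b^{\beta}$; then I would take its $\{a,b\}$-compression $T$ and set $J = T + (b^{\beta})$. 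Propositions \ref{shiftpluspbetti} and \ref{compressedpluspbetti} chain together to give $\Hilb(J)=\Hilb(I)$ and $b_{i,j}(J)\geq b_{i,j}(I)$, while the strictness in reverse lex is witnessed by Proposition \ref{shiftmacaulay}(iv) and Lemma \ref{compressionstuff}(i).

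The main obstacle is engineering the shift phase so that $I^{\circ\circ}$ is genuinely $(a,b)$-strongly shifted \emph{and} still has no generator divisible by $b^{\beta}$. The corollary immediately after Corollary \ref{samehilb} only guarantees that a single application of $\shift_{a,b,t+1}$ preserves the weaker bound $b^{\beta+1}$, so naive iteration erodes the bound. The key observation making this tractable is that for any ideal with no generator divisible by $b^{\beta}$, the condition of $(a,b,t)$-shiftedness is automatic for every $t\geq \beta-1$: any generator of such an ideal dividing $fa^{s}b^{\ell+t}$ has $b$-exponent at most $\beta-1\leq s+t$, hence automatically divides $fa^{\ell}b^{s+t}$ as well. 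Thus strong shiftedness reduces to the finitely many conditions for $t=0,1,\ldots,\beta-2$. Arranging these shifts so that the bound is not degraded, for instance in decreasing order of $t$ and verifying that a shift at one level preserves the shiftedness established at higher levels, is the core technical burden, after which the remaining verifications (that $T+(b^{\beta})$ contains $P$ and that the walk strictly increases reverse lex) are routine.
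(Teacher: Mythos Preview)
Your overall architecture matches the paper exactly: reduce to a monomial ideal containing $P$ via Lemma~\ref{anymonregseq}, walk to a Borel-plus-$P$ ideal by alternating shifts and $\{a,b\}$-compressions, and finish with Theorem~\ref{boreltolex}. The difference is entirely in how you organise the shifting phase, and there your ``main obstacle'' is self-inflicted.

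You propose to iterate the shifts $\shiftme_{a,b,t}$ directly on the deleted ideal $I^{\circ}$, and then worry that the bound $b^{\beta}\nmid\gens$ erodes to $b^{\beta+1}$ after one step. Your fix (shifting in decreasing order of $t$) is in tension with the very hypotheses you need: Section~6 requires the input to be $(a,b,t)$-shifted before you may apply $\shiftme_{a,b,t+1}$, which naturally forces the low values of $t$ to be handled first, not last. So the scheme as written does not close.

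The paper sidesteps this by iterating at the plus-$P$ level rather than on $I^{\circ}$. At each step one passes from the current ideal $I\supset P$ to the ideal $I'$ obtained by deleting $b^{e_b}$ from the minimal generating set, applies a single $\shiftme_{a,b,t+1}$ to $I'$, and then adds $P$ back (Proposition~\ref{tshiftingplusp}). The point you are missing is that this re-deletion automatically restores the bound: since $b^{e_b}\in I$, no \emph{other} minimal generator of $I$ can be divisible by $b^{e_b}$, so $I'$ always has no generator divisible by $b^{e_b}$, regardless of what the previous shift did. There is therefore no erosion to track. Termination is handled, exactly as you say, by the reverse-lex order (Proposition~\ref{stronglyshifting}), so one never needs to pre-arrange a finite sequence of shifts or verify that a shift at one level preserves shiftedness at another.

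In short: replace ``iterate shifts on $I^{\circ}$'' with ``iterate the map $I\mapsto \shiftme_{a,b,t+1}(I')+P$ on $I$'', choosing at each stage the smallest $t$ for which $I$ fails to be $(a,b,t)$-shifted-plus-$P$. Your observation that $(a,b,t)$-shiftedness is automatic for $t\geq\beta-1$ is correct but then unneeded.
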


 Throughout the section, $e_{1}\leq
\cdots \leq e_{n}\leq\infty$.  For variables $a=x_{i},
b=x_{j}$, set $e_{a}=e_{i}$ and $e_{b}=e_{j}$.  For ideals containing
$P=(x_{1}^{e_{1}},\cdots,x_{n}^{e_{n}})$, we will frequently want to
consider the ideal obtained by ``deleting'' $b^{e_{b}}$ from a
generating set:

\begin{notation}
Let a monomial ideal $I\supset P$ and variables $a,b$ be given.  We
set 
$I'$ equal to the ideal 
generated by all the minimal monomial generators of $I$ except for
$b^{e_{b}}$.  (If $b^{e_{b}}$ is not a minimal monomial generator of
$I$, set $I'=I$.)
\end{notation}

\begin{lemma} Let $a$ and $b$ be given.  Then $I$ is
  $(a,b,t)$-shifted-plus-$P$ if and only if $I'$ is 
  $(a,b,t)$-shifted, and $I$ is $\{a,b\}$-compressed-plus-$P$ if and
  only if $I'$ is $\{a,b\}$-compressed.
\end{lemma}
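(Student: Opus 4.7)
The plan is first to derive a clean intrinsic description of $I'$: namely, $I'$ coincides with the ideal generated by all monomials of $I$ that are not divisible by $b^{e_{b}}$. This follows from the observation that any minimal generator of $I$ distinct from $b^{e_{b}}$ cannot itself be divisible by $b^{e_{b}}$ (otherwise $b^{e_{b}}\in I$ would properly divide it, contradicting minimality). An immediate corollary is the identity $I = I' + P$, valid regardless of whether $b^{e_{b}}$ is a minimal generator of $I$: for $i\neq j$, the pure power $x_{i}^{e_{i}}$ belongs to $I'$ because any minimal generator of $I$ dividing $x_{i}^{e_{i}}$ is a power of $x_{i}\neq b$, hence not equal to $b^{e_{b}}$. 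With this in hand, both backward implications become formal: if $I'$ is $(a,b,t)$-shifted (resp.\ $\{a,b\}$-compressed), then taking $\hat{I}:=I'$ in $I = I' + P$ exhibits $I$ as $(a,b,t)$-shifted-plus-$P$ (resp.\ $\{a,b\}$-compressed-plus-$P$).

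For the forward direction on the shifted side, I would suppose $I = \hat{I} + P$ with $\hat{I}$ $(a,b,t)$-shifted, and take $m = fa^{s}b^{\ell+t}\in I'$ with $s<\ell$; the goal is to produce $u^{*}\in I$ that divides $fa^{\ell}b^{s+t}$ and is not divisible by $b^{e_{b}}$, since by the characterization above this forces $fa^{\ell}b^{s+t}\in I'$. Choose a minimal generator $u$ of $I$ with $u\mid m$ and $b^{e_{b}}\nmid u$; then either $u = x_{i}^{e_{i}}$ for some $i\neq j$, or $u\in\hat{I}$. In the first case $u$ also divides $fa^{\ell}b^{s+t}$ (the exponent swap preserves divisibility by a pure power of any variable other than $b$, using $s\leq\ell$ when that variable is $a$), so set $u^{*} = u$. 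In the second case, write $u = f'a^{s'}b^{r}$ with $r<e_{b}$: if $r\leq s'+t$, then $r\leq s+t$ and $u$ itself divides $fa^{\ell}b^{s+t}$; otherwise $r>s'+t$, and applying the $(a,b,t)$-shifted property to $u$ produces $u^{*}:=f'a^{r-t}b^{s'+t}\in\hat{I}$, which divides $fa^{\ell}b^{s+t}$ by direct verification. The crucial observation is that the $b$-exponent of $u^{*}$ is $s'+t<r<e_{b}$, forcing $b^{e_{b}}\nmid u^{*}$.

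The compressed case runs on the same template: given $fa^{p}b^{q}\in I'$ with $q\geq 1$, I select a minimal generator $u$ of $I$ dividing $fa^{p}b^{q}$ with $b^{e_{b}}\nmid u$, and in the case $u = f'a^{p'}b^{r}\in\hat{I}$ (with $r<e_{b}$) apply the single-step lex move on the slice $V_{f'}\subseteq\bbbk[a,b]$ of $\hat{I}$ to obtain $u^{*}=f'a^{p'+1}b^{r-1}\in\hat{I}$; the $b$-exponent of $u^{*}$ is $r-1<e_{b}$, so again $b^{e_{b}}\nmid u^{*}$. The one step requiring real care in either equivalence is ensuring that the image $u^{*}$ under the shift (or lex step) avoids $b^{e_{b}}$, and this is handled uniformly by the bound $r<e_{b}$ forced by the choice of $u$.
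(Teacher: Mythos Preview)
Your proof is correct and follows essentially the same approach as the paper: choose a minimal generator dividing the given monomial, case on whether it is a pure power $x_{k}^{e_{k}}$ or lies in $\hat{I}$, and in the latter case apply the shifted (resp.\ compressed) property of $\hat{I}$ to produce a witness whose $b$-exponent stays below $e_{b}$. Your treatment is in fact more thorough than the paper's: you first establish the intrinsic description of $I'$ and the identity $I=I'+P$ (which the paper uses implicitly), you prove both directions explicitly, and you write out the $(a,b,t)$-shifted case in full rather than leaving it as ``similar''; the paper only spells out the forward implication for the compressed case.
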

\begin{proof}  Suppose that $I$ is $\{a,b\}$-compressed-plus-$P$.
  (The proof for $(a,b,t)$-shifted is similar.)  Then there exists an
  $\{a,b\}$-compressed 
  ideal $\hat{I}$ such that $I=\hat{I}+P$.  We will show that $I'$ is
  $\{a,b\}$-compressed.  

  Fix a monomial $f$ not divisible by $a$ or $b$, and suppose that
  $u=fa^{p}b^{q}$ and $v=fa^{r}b^{s}$ are monomials of the same degree
  such that $v>_{\mathrm{lex}} u$ (i.e., $r>p$) and $u\in I'$.  We
  need to show that $v\in I'$ as well.  If $p\geq e_{a}$, we have
  $v\in I'$ since $a^{e_{a}}\in I'$.  Otherwise, $u$ is divisible by
  some minimal generator $w$ of $I'$; write $w=f'a^{p'}b^{q'}$ with
  $q'<e_{b}$.  If
  $q'<s$ or $w=x_{k}^{e_{k}}$, we have $v\in I'$ immediately,
  otherwise $w\in \hat{I}$ 
  since $w\in I\smallsetminus P$.  Since $\hat{I}$ is
  $\{a,b\}$-compressed, it follows that $f'a^{p'+q'-s}b^{s}\in
  \hat{I}$.  Since this is in $I$ but is not divisible by $b^{e_{b}}$,
  it is in $I'$, so we have $v\in I'$ as desired.
\end{proof}

\begin{proposition}\label{tshiftingplusp}
Let $I$ be a monomial ideal which is $(a,b,t)$-shifted-plus-$P$.
Then there exists an $(a,b,t+1)$-shifted-plus-$P$ ideal $J$ which 
has the same
Hilbert function as 
$I$, is reverse lexicographically greater than or equal to $I$, and satisfies
$b_{i,j}(J)\geq b_{i,j}(I)$. 
\end{proposition}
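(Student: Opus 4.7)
The plan is to strip the generator $b^{e_b}$ from $I$, apply the shift $\shiftme_{a,b,t+1}$ to the resulting $(a,b,t)$-shifted ideal, and then restore $P$, invoking Proposition \ref{shiftpluspbetti} for the Betti number comparison.

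Concretely, I first use the preceding Lemma to conclude that $I'$ (the ideal generated by the minimal monomial generators of $I$ other than $b^{e_b}$) is $(a,b,t)$-shifted. I then observe that $I'$ has no minimal generator divisible by $b^{e_b}$: any such minimal generator of $I$ would necessarily be $b^{e_b}$ itself, since any proper multiple of $b^{e_b}$ is non-minimal in $I$, and $b^{e_b}$ has been removed. Since $I\supset P$, every pure power $x_k^{e_k}$ with $k\neq b$ lies in $I'$, so $I' + P = I' + (b^{e_b}) = I$.

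Next, set $J^{\flat} := \shiftme_{a,b,t+1}(I')$ and $J := J^{\flat} + P$. By Proposition \ref{shiftmacaulay}, $J^{\flat}$ is an $(a,b,t+1)$-shifted ideal with $\Hilb(J^{\flat})=\Hilb(I')$ and $J^{\flat}\geq_{\mathrm{rev}} I'$; by the metadefinition, $J$ is $(a,b,t+1)$-shifted-plus-$P$. Each pure power $x_k^{e_k}$ with $k\neq b$ has $b$-exponent $0\lneqq t+1$, so it falls under the first clause of the definition of $\shiftme_{a,b,t+1}$, giving $x_k^{e_k}\in J^{\flat}$ iff $x_k^{e_k}\in I'$; hence $J^{\flat}$ contains every non-$b$ pure power of $P$, and $J = J^{\flat} + (b^{e_b})$. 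I then apply Proposition \ref{shiftpluspbetti} with $\beta = e_b$ to obtain $b_{i,j}(J)=b_{i,j}(J^{\flat}+b^{e_b})\geq b_{i,j}(I'+b^{e_b})=b_{i,j}(I)$, and Corollary \ref{samehilb} yields $\Hilb(J)=\Hilb(I)$.

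For the reverse lexicographic comparison $J\geq_{\mathrm{rev}} I$, I start from $J^{\flat}\geq_{\mathrm{rev}} I'$ supplied by Proposition \ref{shiftmacaulay}(iv). In each degree $d$, both $J_d$ and $I_d$ contain the same block $b^{e_b}\cdot S_{d-e_b}$ coming from $P$, so the comparison reduces to matching the non-$b^{e_b}$-divisible monomials of $J^{\flat}_d$ and $I'_d$ in their sorted rev-lex order. These remaining sets have equal size, and Lemma \ref{2no1} applied to the matching propagates the inequality to the full sorted sets $J_d$ and $I_d$. The main obstacle is precisely this rev-lex bookkeeping: one must check that the swaps performed by $\shiftme_{a,b,t+1}$ interact well enough with divisibility by $b^{e_b}$ for the non-$b^{e_b}$-divisible portions to match up correctly, but once this combinatorial check is complete, the conclusion follows directly from the machinery of Section 6.
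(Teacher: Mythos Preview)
Your construction is exactly the paper's: set $J'=\shiftme_{a,b,t+1}(I')$ and $J=J'+P$, then invoke Corollary \ref{samehilb} for the Hilbert function, Proposition \ref{shiftpluspbetti} for the Betti inequality, and Proposition \ref{shiftmacaulay} for the rev-lex comparison.  The only place you hesitate is the rev-lex step, and there the ``combinatorial check'' you flag is precisely Corollary \ref{samecolon}: since $I'\cap(b^{e_b})=J'\cap(b^{e_b})$, the monomial-by-monomial bijection $I'_d\to J'_d$ implicit in the shift (which never increases the $b$-exponent and, by Lemma \ref{techlemma}, is the identity on monomials divisible by $b^{e_b}$) restricts to a bijection on the complements of $(b^{e_b})$; extending by the identity on $b^{e_b}S_{d-e_b}$ gives a bijection $I_d\to J_d$ with each image rev-lex $\geq$ its source, and Lemma \ref{2no1} finishes.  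The paper simply cites Proposition \ref{shiftmacaulay} without spelling this out, so your proof is, if anything, more explicit.
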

\begin{proof}
Set $J'=\shiftme_{a,b,t+1}(I')$ and $J=J'+P$.  
We have $I=I'+b^{e_{b}}$ and $J=J'+b^{e_{b}}$, so, by Corollary
\ref{samehilb}, $I$ and $J$ have the same Hilbert function, by
Proposition \ref{shiftmacaulay}, $J$ is reverse lexicographically
greater than or equal to $I$, and, by
Proposition \ref{shiftpluspbetti}, $b_{i,j}(J)\geq b_{i,j}(I)$.
\end{proof}

\begin{proposition}\label{stronglyshifting}
Let $I$ be a monomial ideal containing $P$, and fix $a$ and $b$.  Then
there exists an $(a,b)$-strongly shifted-plus-$P$ ideal $J$ which is
reverse lexicographically greater than or equal to $I$, has the same
Hilbert function as $I$, and satisfies $b_{i,j}(J)\geq b_{i,j}(I)$ for
all $i,j$.  
\end{proposition}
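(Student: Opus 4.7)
The plan is to construct $J$ by iterating a ``shift toward strongly shifted'' operation, using reverse lexicographic order as a monovariant to guarantee termination. I will build a sequence $I = I_0, I_1, I_2, \ldots$ of monomial-plus-$P$ ideals, each reverse lexicographically greater than the previous (strictly so, until the sequence stabilizes), with the Hilbert function of $I$ preserved and Betti numbers only growing along the chain. Since only finitely many monomial ideals share the Hilbert function of $I$, this sequence must terminate, and the terminal ideal $J$ will be $(a,b)$-strongly shifted-plus-$P$.

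At each step, given $I_k$ that is not yet strongly shifted-plus-$P$, let $t_k$ denote the smallest nonnegative integer for which $I_k$ fails to be $(a,b,t_k)$-shifted-plus-$P$. If $t_k \geq 1$, then by minimality of $t_k$ the ideal $I_k$ is $(a,b,t_k - 1)$-shifted-plus-$P$, so Proposition \ref{tshiftingplusp} applied with $t = t_k - 1$ yields the desired $I_{k+1}$. If $t_k = 0$, I instead take $I_{k+1} = \shiftme_{a,b,0}(I_k)$ directly, invoking Proposition \ref{shiftmacaulay} for the Hilbert, shiftedness, and reverse lex conditions, and Theorem \ref{shiftbetti} for the Betti number inequality.

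In either branch, strict progress $I_{k+1} >_{\mathrm{rev}} I_k$ is automatic: by construction $I_{k+1}$ is $(a,b,t_k)$-shifted-plus-$P$ while $I_k$ is not, so the two ideals cannot coincide, and combined with the $\geq$ inequality already cited this gives the strict increase. Termination then follows from the finiteness of the set of monomial ideals with Hilbert function $\Hilb(I)$, and the resulting ideal $J$ inherits the stated Hilbert, Betti, and reverse lex properties by transitivity along the chain.

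The main obstacle is the base case $t_k = 0$: Proposition \ref{tshiftingplusp} only provides the transition from $(a,b,t)$-shifted-plus-$P$ to $(a,b,t+1)$-shifted-plus-$P$ for $t \geq 0$, so it does not directly promote an arbitrary monomial-plus-$P$ ideal to an $(a,b,0)$-shifted-plus-$P$ one. I circumvent this by applying $\shiftme_{a,b,0}$ directly to $I_k$ (rather than to the companion $I_k'$ with $b^{e_b}$ removed and then readjoining $P$), and then verifying by hand that $P \subseteq \shiftme_{a,b,0}(I_k)$. The delicate subcheck is that $b^{e_b}$ falls into the $fa^s b^\ell$ branch of the shift and is retained only if both $b^{e_b}$ and $a^{e_b}$ lie in $I_k$; the former is immediate, and the latter holds because $a = x_i$ precedes $b = x_j$ in the lex order, forcing $i < j$, so the hypothesis $e_1 \leq \cdots \leq e_n$ yields $e_a \leq e_b$ and hence $a^{e_b}$ is a multiple of $a^{e_a} \in P \subseteq I_k$.
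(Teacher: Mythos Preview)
Your argument is correct and follows essentially the same route as the paper: first handle the $t=0$ case by applying $\shiftme_{a,b}$ directly (checking that the result still contains $P$, which the paper dismisses with ``Clearly''), then iterate Proposition~\ref{tshiftingplusp} from the smallest failing $t$, using the reverse lex order as a monovariant to force termination. Your write-up is in fact a bit more careful than the paper's in spelling out the strict-progress and $P$-containment verifications.
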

\begin{proof}
Clearly, $\shiftme_{a,b}(I)$ contains $P$.  Thus, replacing $I$ with
$\shiftme_{a,b}(I)$ if necessary (and applying Proposition
\ref{shiftmacaulay} and Theorem \ref{shiftbetti},) we may assume that $I$ is 
$(a,b)$-shifted.  
If $I$ is not already $(a,b)$-strongly shifted-plus-$P$, there exist integers
$t\gneqq 0$ such
that $I$ is not $(a,b,t)$-shifted-plus-$P$.  Choose the smallest such
$t$.   Then
by Proposition 
\ref{tshiftingplusp} there exists an $(a,b,t)$-shifted-plus-$P$
ideal with the same Hilbert function as $I$ and larger graded Betti
numbers.  Replace $I$ with this new ideal and repeat.  This process
must terminate, since there are only finitely many monomial ideals
with the same Hilbert function, and at each step we replace the ideal
with a reverse lexicographically greater one.
Let $J$ be the
resulting ideal.
\end{proof}

\begin{proposition}\label{compressionplusp}
Let $I$ be $(a,b)$-strongly-shifted-plus-$P$.  
Then there exists
an
$\{a,b\}$-compressed-plus-$P$ ideal $T$ which is reverse lexicographically
greater than or equal to $I$, has the same Hilbert
function as 
$I$, and satisfies $b_{i,j}(T)\geq b_{i,j}(I)$.  
\end{proposition}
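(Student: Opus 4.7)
The plan is to realize the required $T$ as $T'+b^{e_b}$, where $T'$ is the $\{a,b\}$-compression of an auxiliary strongly shifted ideal, and then invoke Proposition \ref{compressedpluspbetti} for the Betti-number comparison. Specifically, let $I'$ denote the ideal obtained by deleting $b^{e_b}$ from the minimal generating set of $I$ (with the convention that $b^{e_b}=0$ if $e_b=\infty$), so $I=I'+b^{e_b}$. By the lemma preceding Proposition \ref{tshiftingplusp}, $I'$ is $(a,b)$-strongly shifted. Moreover, no minimal generator of $I'$ is divisible by $b^{e_b}$, and since the standing convention $a<_{\mathrm{lex}}b$ together with the nondecreasing exponent sequence for $P$ forces $e_a\leq e_b$, we have $a^{e_a}\in I'$. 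Hence $I'$ satisfies the standing hypotheses of Section 7 with $\beta=e_b$ and $\alpha=e_a$.

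Define $T'$ to be the $\{a,b\}$-compression of $I'$, and set $T:=T'+b^{e_b}$. Proposition \ref{compressedpluspbetti} immediately yields $b_{i,j}(T)=b_{i,j}(T'+b^{e_b})\geq b_{i,j}(I'+b^{e_b})=b_{i,j}(I)$ for all $i,j$. The Hilbert-function equality will follow by combining Theorem \ref{compressionbetti}(ii) (which gives $\dim I'_d=\dim T'_d$) with Corollary \ref{compressionsamecolon} (which gives $T'\cap(b^{e_b})=I'\cap(b^{e_b})$), via inclusion--exclusion applied to the decompositions $T=T'+b^{e_b}$ and $I=I'+b^{e_b}$. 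The reverse-lex comparison is immediate from Lemma \ref{compressionstuff}(i): $T'\geq_{\mathrm{rev}}I'$, and this inequality is preserved after adjoining the common element $b^{e_b}$.

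The only item requiring care is verifying that $T$ genuinely has the form (ideal)\,$+\,P$, i.e., that every pure power appearing in $P$ except possibly $b^{e_b}$ itself already lies in $T'$. Writing the decomposition $I'=\bigoplus_f f V_f$ used to define the $\{a,b\}$-compression, we check each generator $x_k^{e_k}$ of $P$ with $x_k\neq b$. When $x_k\notin\{a,b\}$, the corresponding component $V_{x_k^{e_k}}=\Bbbk[a,b]$ is its own lex-compression, so $x_k^{e_k}\in T'$. When $x_k=a$, we have $a^{e_a}\in V_1$, and since $a^{e_a}$ is the lex-maximal monomial of degree $e_a$ in $\Bbbk[a,b]$, it lies in the lex ideal $W_1$ replacing $V_1$ under compression. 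Thus $T'\supseteq P\smallsetminus\{b^{e_b}\}$, so $T=T'+P$; since $T'$ is $\{a,b\}$-compressed by construction, $T$ is $\{a,b\}$-compressed-plus-$P$.

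The main technical content, namely that compressing modulo a pure power of $b$ increases Betti numbers, has already been handled by Proposition \ref{compressedpluspbetti}; the obstacle here is purely bookkeeping, and the most delicate point is the observation that lex-compression cannot lose $a^{e_a}$ from the $f=1$ component, because $a^{e_a}$ occupies the top of the lex order in its degree.
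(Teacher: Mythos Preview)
Your argument is correct and follows the same approach as the paper: take $T'$ to be the $\{a,b\}$-compression of $I'$, set $T=T'+P=T'+b^{e_b}$, and invoke Proposition~\ref{compressedpluspbetti} and Corollary~\ref{compressionsamecolon}. You actually supply more detail than the paper does (verifying $T'\supseteq P\smallsetminus\{b^{e_b}\}$ and addressing the reverse-lex comparison), though note one slip: the standing convention is $a>_{\mathrm{lex}}b$, not $a<_{\mathrm{lex}}b$; your conclusion $e_a\le e_b$ is nonetheless correct, since $a=x_i$ with smaller index than $b=x_j$ and the $e_k$ are nondecreasing.
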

\begin{proof}
Let $T'$ be the $\{a,b\}$-compression of $I'$, and put $T=T'+P$.  
We have $I=I'+b^{e_{b}}$ and $T=T'+b^{e_{b}}$, so, by Corollary
\ref{compressionsamecolon}, $I$ and $T$ have the same Hilbert function,
and, by Proposition \ref{compressedpluspbetti}, $b_{i,j}(T)\geq
b_{i,j}(I)$.
\end{proof}

\begin{proposition}\label{borellification}
Let $I$ be a monomial ideal containing $P$.  Then there exists a
Borel-plus-$P$ ideal $B$ such that $B$ has the same Hilbert function
as $I$, and $b_{i,j}(B)\geq b_{i,j}(I)$ for all $i,j$.
\end{proposition}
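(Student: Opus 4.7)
My plan is to iterate the operations provided by Propositions \ref{stronglyshifting} and \ref{compressionplusp}, paralleling the structure of the characteristic-zero argument in Proposition \ref{toborel}. The key observation is Theorem \ref{compressionbetti}(iv): an ideal is Borel if and only if it is $\{a,b\}$-compressed for every pair of variables. Translated through the obvious correspondence between $I$ and $I'$, this means $I$ is Borel-plus-$P$ if and only if $I$ is $\{a,b\}$-compressed-plus-$P$ for every pair $a,b$. So it suffices to keep ``fixing'' one pair of variables at a time, as long as Hilbert function is preserved and Betti numbers do not decrease.

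Concretely, if $I$ is already Borel-plus-$P$ we set $B=I$ and are done. Otherwise there exist variables $a,b$ (with $a$ before $b$ in lex) such that $I$ is not $\{a,b\}$-compressed-plus-$P$. I would first invoke Proposition \ref{stronglyshifting} to replace $I$ by an $(a,b)$-strongly-shifted-plus-$P$ ideal $J$ with $\Hilb(J)=\Hilb(I)$, $b_{i,j}(J)\geq b_{i,j}(I)$ for all $i,j$, and $J\geq_{\mathrm{rev}} I$. Then I would apply Proposition \ref{compressionplusp} to $J$, producing an $\{a,b\}$-compressed-plus-$P$ ideal $T$ with the same Hilbert function, $b_{i,j}(T)\geq b_{i,j}(J)$, and $T\geq_{\mathrm{rev}} J$. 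Replace $I$ by $T$ and repeat.

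The only real point to verify is termination of the loop, which is the main (mild) obstacle. Because there are only finitely many monomial ideals with a fixed Hilbert function, and each iteration is reverse-lexicographically nondecreasing, it is enough to show that whenever $I$ is not $\{a,b\}$-compressed-plus-$P$ the composite step produces a strictly reverse-lex greater ideal. This holds because the output $T$ is $\{a,b\}$-compressed-plus-$P$ by construction, so $T\neq I$, and $T\geq_{\mathrm{rev}} I$ forces $T>_{\mathrm{rev}} I$. Consequently, the procedure must terminate at an ideal $B$ for which no further pair $(a,b)$ violates $\{a,b\}$-compressed-plus-$P$; that is, $B$ is $\{a,b\}$-compressed-plus-$P$ for every pair, hence Borel-plus-$P$ by Theorem \ref{compressionbetti}(iv). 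Throughout, the Hilbert function is preserved and the graded Betti numbers only increase, yielding $\Hilb(B)=\Hilb(I)$ and $b_{i,j}(B)\geq b_{i,j}(I)$ as required.
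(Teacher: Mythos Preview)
Your proposal is correct and follows essentially the same approach as the paper: iterate Propositions \ref{stronglyshifting} and \ref{compressionplusp} over pairs $(a,b)$, using the reverse lexicographic order together with the finiteness of monomial ideals with a given Hilbert function to guarantee termination at a Borel-plus-$P$ ideal. Your write-up is in fact slightly more careful than the paper's, in that you make explicit why each nontrivial step is strictly reverse-lex increasing and invoke Theorem \ref{compressionbetti}(iv) for the final characterization.
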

\begin{proof}
If $I$ is not already Borel-plus-$P$, there exist pairs of variables
$a,b$ such that $I$ is not $\{a,b\}$-compressed-plus-$P$.  Choose any such
pair.  By Propositions
\ref{stronglyshifting} 
and \ref{compressionplusp}, there exists an
$\{a,b\}$-compressed-plus-$P$ ideal $T$ with the same Hilbert function as $I$
and larger Betti numbers.  Replace $I$ with $T$ and repeat.  This
process must terminate because there are only finitely many monomial
ideals with the same Hilbert function, and at each step we are
replacing the ideal with a reverse lexicographically greater one.
Let $B$ be the resulting ideal.
\end{proof}

Theorem \ref{boreltolex} completes the proof of Theorem \ref{lpp}.

\begin{proof}[Proof of Theorem \ref{lpp}.]
By Lemma \ref{anymonregseq}, we may assume without loss of
generality that
$(f_{1},\cdots,f_{r})=P$.  By Proposition \ref{borellification}, we
may assume that $I$ is Borel-plus-$P$.  Thus, the desired inequality holds by
Theorem \ref{boreltolex}.
\end{proof}

\section{Consecutive Cancellation}

A \emph{consecutive cancellation} in the graded Betti numbers of a module $M$
is the simultaneous subtraction of $1$ from consecutive Betti numbers
in the same internal degree, i.e., replacing $b_{i,j}(M)$ and
$b_{i-1,j}(M)$ with $(b_{i,j}(M)-1)$ and $(b_{i-1,j}(M)-1)$.  

We say that the graded Betti numbers of an ideal $I$ are obtained from those
of $L$ by consecutive cancellations if we can perform a sequence of
consecutive cancellations on the $b_{i,j}(L)$ to produce the Betti
numbers of $I$.  Heuristically, this happens because the minimal
resolution of $L$ ``deforms'' into a (non-minimal) resolution of $I$,
which can be decomposed into a direct sum of the minimal resolution of
$I$ and some trivial complices $0\to S\to S\to 0$; the cancellations
are in the degrees of these trivial complices.
We define this more formally as follows:  

\begin{definition}
Let $L$ and $I$ be two homogeneous ideals.  We say that the graded Betti
numbers of $I$ are \emph{obtained from those of $L$ by consecutive
  cancellations} if there exist nonnegative integers $c_{i,j}$ such
that, for all 
$i$ and $j$, we have $b_{i,j}(I)=b_{i,j}(L)-c_{i,j}-c_{i-1,j}$.  
\end{definition}

Peeva shows in \cite{Pe} that, if $L$ is the lex ideal with the same
Hilbert function as $I$, the graded Betti numbers of $I$ are obtained from
those of $L$ by consecutive cancellations; similar results are known
(often with the same proof) in many settings where the lex
ideals attain all Hilbert functions.

We will show:
\begin{theorem}\label{consecutivecancellations}
Let $I$, $P$, and $L$ be as in the statement of
  Theorem \ref{lpp}.  Then the graded Betti numbers of $I$ are obtained from
  those of $L$ by consecutive cancellations.
\end{theorem}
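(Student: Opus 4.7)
The plan is to track the consecutive-cancellation structure through the chain of transformations used in the proof of Theorem \ref{lpp}. The relation ``obtained by consecutive cancellations'' is transitive: if $b_{i,j}(A) = b_{i,j}(B) - c_{i,j} - c_{i-1,j}$ and $b_{i,j}(B) = b_{i,j}(C) - d_{i,j} - d_{i-1,j}$ with $c, d \geq 0$, then $b_{i,j}(A) = b_{i,j}(C) - (c_{i,j}+d_{i,j}) - (c_{i-1,j}+d_{i-1,j})$. Hence it suffices to verify that each individual step from $I$ to $L$ preserves this relation.

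The chain $I = I_{0}, I_{1}, \ldots, I_{N} = L$ built in the proof of Theorem \ref{lpp} consists of three kinds of transition: (a) Gröbner degenerations under linear changes of coordinates (Lemma \ref{anymonregseq} and, after polarization and depolarization, each shifting step, as in Lemma \ref{isnice} and Proposition \ref{tshiftingplusp}); (b) $\{a,b\}$-compressions (Theorem \ref{compressionbetti}); and (c) the passage from Borel-plus-$P$ to lex-plus-$P$ via Theorem \ref{boreltolex}. For (a), the minimal resolution of the initial ideal lifts through the Gröbner flat family to a (generally non-minimal) resolution of the original ideal, and the non-minimality decomposes as a direct sum of trivial subcomplexes $0 \to S(-j) \to S(-j) \to 0$ in consecutive homological degrees --- the standard mechanism underlying Peeva's consecutive-cancellations theorem. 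Polarization and depolarization preserve Betti numbers verbatim (Proposition \ref{polarlemma}(i)) and so contribute no cancellations. For (b), the compression construction in \cite{Me3} is itself realized as an initial ideal under a suitable linear change of coordinates in $\bbbk[\mathcal{A}]$, so the same flat-degeneration argument applies.

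For (c), I would apply formula (\ref{thecolons}) to both $B = \hat{B} + P$ (with $\hat{B}$ Borel and containing no $x_{i}^{e_{i}}$) and $L = \hat{L} + P$ (with $\hat{L}$ lex). This reduces the comparison to a sum over squarefree $\tau \subseteq \{x_{1}, \ldots, x_{r}\}$ of comparisons $b_{i-|\tau|,\, j - \deg\mathbf{x}_{\tau}}(\hat{L} : \mathbf{x}_{\tau})$ versus $b_{i-|\tau|,\, j - \deg\mathbf{x}_{\tau}}(\hat{B} : \mathbf{x}_{\tau})$. Each such pair of colon ideals lives in a common smaller polynomial ring with the same Hilbert function, $\hat{B} : \mathbf{x}_{\tau}$ is Borel, and $\hat{L} : \mathbf{x}_{\tau}$ retains enough lex structure from $\hat{L}$ that Peeva's theorem \cite{Pe} --- possibly preceded by a separate comparison of $\hat{L} : \mathbf{x}_{\tau}$ with the genuine lex-segment ideal of its Hilbert function --- yields the per-$\tau$ consecutive-cancellation structure. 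The uniform shifts $(i,j) \mapsto (i - |\tau|,\, j - \deg\mathbf{x}_{\tau})$ preserve the pairing of consecutive homological positions, so summing over $\tau$ transfers the structure intact.

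The main obstacle is step (c). Steps (a) and (b) are essentially classical applications of the flat-degeneration principle, but in step (c) one must verify that the per-$\tau$ cancellations assemble coherently, without introducing dependencies between different $\tau$ that would violate the consecutive-cancellation form. The required bookkeeping --- in particular, checking that the witnesses $c_{i,j}$ for each $\tau$ can be chosen compatibly --- is delicate but, given the uniformity of the homological shift, should be tractable.
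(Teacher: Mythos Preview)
Your outline has a genuine gap at step (a), in the treatment of the shifting operations.

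You assert that ``after polarization and depolarization, each shifting step'' is a Gr\"obner degeneration, citing Lemma~\ref{isnice} and Proposition~\ref{tshiftingplusp}. But Lemma~\ref{isnice} lives in Section~3, the characteristic-zero argument, and its automorphism $\tildephi$ is built from a primitive $e_{b}^{\,\mathrm{th}}$ root of unity; such a root need not exist in positive characteristic. Proposition~\ref{tshiftingplusp}, by contrast, is part of the characteristic-free proof (Sections~5--8), and its Betti-number inequality comes from Proposition~\ref{shiftpluspbetti}, which is proved by direct Koszul-homology analysis (Mayer--Vietoris kernels, mapping-cone images), not by any flat degeneration. So you have not actually exhibited the $t$-shift-plus-$P$ step as a Gr\"obner degeneration over an arbitrary field, and Peeva's degeneration argument does not apply. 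The same issue already arises for the plain shift $\shiftme_{a,b}$: only the $\sigma$-fixed multidegrees reduce (via Lemma~\ref{shadowlemma}) to the squarefree case and hence to a genuine degeneration; the unfixed multidegrees are handled by Corollary~\ref{unfixedmulti}, which is a Mayer--Vietoris computation.

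What the paper does instead is Lemma~\ref{theend}: it reads the cancellation witnesses $c_{i,m}$ directly off the exact sequences already used in the Betti-number inequalities. For $\sigma$-unfixed $m$ one takes $c_{i,m}=\dim_{\bbbk}(\ker\Delta_{i,I})_{m}$, and for the $t$-shift-plus-$P$ step in the critical multidegree $m=fa^{s}b^{\beta}$ one takes $c_{i,m}=\dim_{\bbbk}(\ker\Delta_{i,a^{t+1}I'})_{a^{t+1}m}-\dim_{\bbbk}(\im b^{\beta}_{*,i-1,J})_{m}$; Proposition~\ref{smaller} guarantees nonnegativity. This is exactly the bookkeeping your outline is missing. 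For step~(c) the paper simply invokes \cite{Mu1}*{Theorem~5.1}, so your per-$\tau$ reduction via formula~(\ref{thecolons}) is unnecessary (and, as you note yourself, would require additional control of the colon ideals $\hat{L}:\mathbf{x}_{\tau}$ that you have not supplied).
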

\begin{proof}
The proof of Theorem \ref{lpp} consists of a series of compressions,
shifts, and $t$-shifts-plus-$P$, followed by a jump from
Borel-plus-$P$ to lex-plus-$P$.  We will show that at each step the
graded Betti numbers are obtained by consecutive cancellations.  Thus, what
we must show is that the graded Betti numbers of $I$ are obtained from
those of $J$ (or $T$) in Theorem \ref{boreltolex}, Lemma
\ref{anymonregseq}, and Propositions
\ref{stronglyshifting} and \ref{compressionplusp}.

Murai shows in \cite{Mu1}*{Theorem 5.1} that the Betti numbers of a
Borel-plus-$P$ 
ideal are obtained from those of the lex-plus-$P$ ideal by consecutive
cancellations.  Since Lemma \ref{anymonregseq} and Proposition
\ref{compressionplusp} use 
only coordinate changes, initial ideals, and compressions, the
statement follows from \cite{Pe} and \cite{Me3}*{Theorem 5.10} in
these cases.  Lemma \ref{theend} below completes the proof.
\end{proof}

\begin{lemma}\label{theend}
Let $I$ be a monomial ideal.
\begin{itemize}
\item[(1)]  Set $J=\shiftme_{a,b}(I)$.  Then the graded Betti numbers
  of $I$ are obtained from those of $J$ by consecutive cancellations.
\item[(2)]  Suppose that $I$ is $(a,b,t)$-shifted and has no
  generators divisible by $b^{\beta}$.  Set $J=\shiftme_{a,b,t}(I)$.
  Then the graded Betti numbers of $I+b^{\beta}$ are obtained from
  those of $J+b^{\beta}$ by consecutive cancellations.
\end{itemize}
\end{lemma}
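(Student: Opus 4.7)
The plan is to track consecutive cancellations through the multigraded proofs of Theorem \ref{shiftbetti} (for part (1)) and Proposition \ref{shiftpluspbetti} (for part (2)). In both arguments, the inequalities we established came from Mayer-Vietoris and mapping-cone long exact sequences whose error terms naturally appear in two consecutive homological indices at the same internal degree, which is exactly the shape of a consecutive cancellation.

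For part (1), I will decompose $b_{i,j}(J)-b_{i,j}(I)$ over multidegrees $m$ of graded degree $j$, as in the proof of Theorem \ref{shiftbetti}. For unfixed pairs $\{m,\sigma(m)\}$, the identity already established inside Corollary \ref{unfixedmulti},
\[(b_{i,m}(J)+b_{i,\sigma(m)}(J)) - (b_{i,m}(I)+b_{i,\sigma(m)}(I)) = \dim(\ker\Delta_{i,I})_m + \dim(\ker\Delta_{i-1,I})_m,\]
is itself the required consecutive-cancellation shape; setting $c^{\mathrm{un}}_{i,j} := \sum_{\{m,\sigma(m)\},\, \deg m = j} \dim(\ker\Delta_{i,I})_m$ disposes of these terms. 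For fixed multidegrees $m=fa^{s}b^{s}$, I will use Theorem \ref{keylemma} together with Lemma \ref{shadowlemma} to rewrite the deficit as $b_{i,\sqrt{m}}(\shiftme_{a,b}(\Shadow_m(I)))-b_{i,\sqrt{m}}(\Shadow_m(I))$, a difference of multigraded Betti numbers of a squarefree ideal and its combinatorial shift in the smaller ring $\bbbk[\supp(m)]$. Since $\sqrt{m}$ is the unique squarefree monomial of its degree in that ring, this multigraded Betti number coincides with the graded Betti number there, so I can invoke the short coordinate-change proof of Theorem \ref{HM} given in the paper: it factors the squarefree shift as two initial-ideal operations bridged by polarization, and by Peeva \cite{Pe} each initial-ideal step introduces only consecutive cancellations while polarization preserves Betti numbers exactly (Proposition \ref{polarlemma}(i)). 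Summing these per-$m$ cancellations over fixed $m$ of graded degree $j$ produces the fixed part of $c_{i,j}$.

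For part (2), I will follow the decomposition (\ref{thesummation}) used in the proof of Proposition \ref{shiftpluspbetti}. The summands treated by Lemma \ref{easy} either collapse to equalities (via Lemmas \ref{smallb}, \ref{bigb}, \ref{shadowplusp}) or pass through the mapping cone for $b^{\beta}$, where the colon-ideal contributions cancel by Corollary \ref{samecolon} and the vanishing images are discarded by Proposition \ref{manyzero}; the residual $b_{i,m}(J)-b_{i,m}(I)$ is then handled by part (1). The remaining Corollary \ref{hard} summand is where the real work lies: the deficit $A$ for the pair $(m,n)$ was computed to equal
\[A = \bigl(\dim(\ker\Delta_{i-1,a^{t+1}I})_{a^{t+1}m}-\dim(\im b^{\beta}_{*,J,i})_{m}\bigr) + \bigl(\dim(\ker\Delta_{i-2,a^{t+1}I})_{a^{t+1}m}-\dim(\im b^{\beta}_{*,J,i-1})_{m}\bigr),\]
with each bracketed piece nonnegative by Proposition \ref{smaller}. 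These two pieces live in consecutive homological indices of the mapping-cone resolution of $S/(J+b^{\beta})$, so after converting to ideal Betti numbers of $J+b^{\beta}$ they supply precisely the $c_{i,j}+c_{i-1,j}$ structure of a consecutive cancellation. Summing over paired multidegrees of graded degree $j$ completes the argument.

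The main obstacle will be the fixed-multidegree case in part (1), where I must bridge Peeva's graded consecutive-cancellation theorem, applied to the squarefree shadow $\Shadow_m(I)$ inside the smaller ring $\bbbk[\supp(m)]$, with the multigraded Betti number I need to control in $S$. This bridge is made clean by the fact that $\sqrt{m}$ is the unique squarefree monomial of its degree in the shadow ring, so the graded cancellations transfer without ambiguity; once part (1) is in hand, the bookkeeping for part (2), while longer, is routine.
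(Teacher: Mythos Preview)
Your proposal is correct and follows essentially the same route as the paper: decompose $b_{i,j}(J)-b_{i,j}(I)$ over multidegrees, handle unfixed pairs by reading off $c_{i,m}=\dim(\ker\Delta_{i,I})_m$ from the Mayer-Vietoris identity, handle fixed multidegrees by applying Peeva's consecutive-cancellation argument to the short initial-ideal proof of Theorem~\ref{HM} on $\Shadow_m(I)$, and for part (2) extract $c_{i,m}$ from the formula for $A$ in Corollary~\ref{hard} while the easy summands of (\ref{thesummation}) reduce to part (1) via Proposition~\ref{tshiftisshift}. The paper records exactly these $c_{i,m}$ and sums them to $c_{i,j}$.
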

\begin{proof}\ 
\begin{itemize}
\item[(1)]
%
Let $m$ be a multidegree.  
If $m$ has the form $fa^{s}b^{s}$, we have
$b_{i,m}(I)=b_{i,|\supp(m)|}(\shadow_{m}(I))$, and likewise for $J$.
By Lemma \ref{shadowlemma}, we can compare these Betti numbers
with Theorem \ref{HM}.
Applying Peeva's proof \cite{Pe} to our
proof of Theorem \ref{HM}, the graded Betti numbers of
$\shadow_{m}(I)$ and $\shadow_{m}(J)$ differ by
consecutive cancellations.
Thus, there exist integers $c_{i,m}$ such that
$b_{i,m}(J)-b_{i,m}(I)=c_{i,m}+c_{i-1,m}$.

If $m$ has the form $fa^{\ell}b^{s}$, put $n=fa^{s}b^{\ell}$.  Then by
the Mayer-Vietoris sequence and 
Proposition \ref{mayervietoriskernel}, we have 
\[
b_{i,m}(J)+b_{i,n}(J)=b_{i,m}(I)+b_{i,n}(I)+\dim_{\bbbk}(\ker
\Delta_{i,I})_{m} +\dim_{\bbbk}(\ker \Delta_{i-1,I})_{m}.
\]
Set $c_{i,m}=\dim_{\bbbk}(\ker \Delta_{i,I})_{m}$.  

\medskip
Finally, we put 
\[
c_{i,j}=\displaystyle\hspace{-9pt}\sum_{\substack{m=\sigma(m)\\\deg
    m=j}}\hspace{-7pt}c_{i,m}+\hspace{-9pt}\sum_{\substack{
    m=fa^{\ell}b^{s}\\\deg 
    m=j}}\hspace{-7pt}c_{i,m}.
\]
The statement follows from the formula in the proof of Theorem
\ref{shiftbetti}.  
\item[(2)] The statement follows from the proof of Proposition
  \ref{shiftpluspbetti} in the same way that (1) follows from the
  proof of Theorem \ref{shiftbetti}.  Let $m$ be a multidegree.  If $m=fa^{\ell}b^{s+t+1}$ with
  $\ell+t+1=\beta$, put $c_{i,m}=\dim_{\bbbk}(\ker
  \Delta_{i,a^{t+1}I'})_{a^{t+1}m} - \dim_{\bbbk}(\im
  b^{\beta}_{*,i-1,J})_{m}$.  Otherwise, define $c_{i,m}$ as in the
  proof of (1) (making the obvious changes).

Again, we put 
\[
c_{i,j}=\displaystyle\hspace{-18pt}\sum_{\substack{m=fa^{s}b^{s+t+1}\\\deg
    m=j}}\hspace{-16pt}c_{i,m}\;+\;\;\hspace{-18pt}\sum_{\substack{
    m=fa^{\ell}b^{s+t+1}\\\deg 
    m=j}}\hspace{-16pt}c_{i,m}.\qedhere
\]
 \end{itemize}
 \end{proof}

\section{Open problems}
We recall some related problems, and make some
brief remarks about them.

\subsection{The general Lex-Plus-Powers Conjecture}
In Evans' original conjecture, the regular sequence was not required
to consist of monomials:

\begin{conjecture}[Evans, The Lex-Plus-Powers Conjecture]\label{stronglpp}
  Suppose that $F=(f_{1},\cdots, f_{r})$ is any regular sequence
  with $\deg f_{i}=e_{i}$, and define $P=(x_{1}^{e_{1}},\cdots,
  x_{r}^{e_{r}})$.  Let $I$ be any homogeneous ideal containing $F$.
  Then there exists a lex-plus-$P$ ideal $L$ such 
  that $I$ and $L$ have the same Hilbert function. Furthermore, $b_{i,j}(L)\geq
  b_{i,j}(I)$ for all $i,j$.  
\end{conjecture}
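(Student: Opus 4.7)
My plan is to reduce Conjecture \ref{stronglpp} to the monomial case already established as Theorem \ref{lpp}. Specifically, it would suffice to show: for every homogeneous ideal $I$ containing a regular sequence $F$ of degrees $e_1,\dots,e_r$, there exists a monomial ideal $J$ containing $P=(x_1^{e_1},\dots,x_r^{e_r})$ with $\Hilb(J)=\Hilb(I)$ and $b_{i,j}(J)\geq b_{i,j}(I)$ for all $i,j$. Once this is in hand, applying Theorem \ref{lpp} to $J$ (with the regular sequence taken to be $P$ itself) produces the lex-plus-$P$ ideal $L$ with the desired properties. This reduction is the natural generalization of Lemma \ref{anymonregseq}, which handles precisely the case where $F$ already consists of monomials.

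The natural route to produce $J$ is via initial ideals. Choose a homogeneous change of coordinates $\phi$ of $S$ and set $J=\init_{\mathrm{rev}}(\phi(I))$. Then $J$ automatically has the same Hilbert function as $I$ and satisfies $b_{i,j}(J)\geq b_{i,j}(I)$; the only thing to verify is the containment $P\subseteq J$, which is implied by $P\subseteq \init_{\mathrm{rev}}(\phi(F))$. I would attempt this containment by imitating the proof of Lemma \ref{anymonregseq}: process the $f_i$ one at a time, and at each step construct a triangular coordinate change $\phi_i$ placing $x_i^{e_i}$ as the reverse-lex initial monomial of $\phi_i(f_i)$, while keeping $x_j^{e_j}=\init_{\mathrm{rev}}(\phi_i(f_j))$ for the already-processed indices $j<i$. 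The root-of-unity substitution developed in Section~3 (cf.\ Lemma \ref{arithmetic} and Lemma \ref{isnice}) is a promising ingredient for controlling cross terms of degree $e_i$ that would otherwise spoil the intended leading monomial.

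The main obstacle is that this reduction is essentially the Eisenbud--Green--Harris conjecture (Conjecture \ref{EGH}) for the given $F$. Controlling the reverse-lex initial ideal of a general regular sequence is exactly the content of EGH, and for a non-monomial $F$ of mixed degrees there is in general no obvious linear change of coordinates making $\init_{\mathrm{rev}}(\phi(F))$ contain $P$. Thus the step I expect to be hardest---in fact, the step where the proof presently breaks down---is precisely the analog of the ``triangular correction'' in Lemma \ref{anymonregseq}: once the first few $f_i$ have been placed in the desired position, the inductive step on the next $f_i$ must respect both the regular-sequence structure and the previously fixed initial monomials, and these two requirements are not simultaneously satisfiable without new input.

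Consequently, a full proof of Conjecture \ref{stronglpp} via this strategy requires a prior proof of Conjecture \ref{EGH}. Short of that, the value of the plan is that any case of EGH that can be established---for instance, in known low-degree or small-codimension cases, or via Gr\"obner degenerations to a monomial regular sequence through a flat family that controls Betti numbers---will immediately combine with Theorem \ref{lpp} to yield the corresponding case of Conjecture \ref{stronglpp}. A secondary, more speculative, direction is to bypass EGH entirely by generalizing the compression/shifting machinery of Sections~5--7 to ideals containing an arbitrary (not necessarily monomial) regular sequence, but each of the key tools used there, notably the decomposition \eqref{thecolons} and the combinatorial shifts $\shiftme_{a,b,t}$, relies heavily on the monomial structure of $P$ and does not obviously extend.
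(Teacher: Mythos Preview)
Your analysis is correct, and it matches the paper's own treatment: Conjecture \ref{stronglpp} is presented in Section 10 as an \emph{open problem}, not as something the paper proves. The paper explicitly formulates your proposed reduction as Conjecture \ref{weaklpp} (the existence of a monomial ideal $M\supseteq P$ with the same Hilbert function as $I$ and larger graded Betti numbers), observes that it is equivalent to the full Lex-Plus-Powers Conjecture, and remarks---exactly as you do---that the usual first move of passing to an initial ideal ``ruins the regular sequence.'' That is precisely the obstruction you locate in your inductive triangular-correction step.

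One refinement worth noting: the reduction you need is a priori slightly stronger than Conjecture \ref{EGH}, since it must control Betti numbers and not merely the Hilbert function. Your initial-ideal route would deliver the Betti inequality for free \emph{if} the containment $P\subseteq \init_{\mathrm{rev}}(\phi(F))$ could be arranged for some coordinate change $\phi$, but this is not known to follow from EGH alone; the paper separates the two by saying that Conjecture \ref{weaklpp} ``without the last sentence'' is what implies EGH. The paper does extract one genuinely new case from this reduction: when all but one of the $f_i$ are pure powers, a suitable term order together with Lemma \ref{anymonregseq} produces the required monomial ideal, so Conjecture \ref{stronglpp} holds for such regular sequences. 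Beyond that case, neither the paper nor your proposal supplies a proof, and none is currently known.
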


A few special cases and reductions are known, due to Francisco,
Richert, and Sabourin \cites{Fr,FR,Ri,RS}, but the conjecture appears
to be wide 
open.   Indeed, the mere existence of the lex-plus-$P$ ideal $L$ is
far from certain; this is the Eisenbud-Green-Harris conjecture
\cites{EGH1,EGH2}. Some special cases are due to Caviglia and Maclagan, Cooper,
 and Richert
\cites{CM,Co,FR
}.  A good survey article on both conjectures is \cite{FR}.

The problem for both conjectures is that the usual first step in
proving Macaulay-type 
theorems is to take an initial ideal, but doing so in this setting
ruins the regular sequence.  Without a monomial ideal, most of our
other techniques are useless.  Unfortunately, Theorem \ref{lpp} does
nothing to resolve this. It does, however, reduce both conjectures to
the same obstacle.  The following statement is equivalent to the
Lex-Plus-Powers conjecture (and, without the last sentence, has been
known for some time to imply the Eisenbud-Green-Harris conjecture):

\begin{conjecture}\label{weaklpp}
  Let $(f_{1},\cdots, f_{r})$ be a regular sequence
  with $deg f_{i}=e_{i}$, and let $P=(x_{1}^{e_{1}},\cdots,
  x_{r}^{e_{r}})$.  Then there exists a monomial ideal $M$ containing
  $P$ such
  that $I$ and $M$ have the same Hilbert function. Furthermore, $M$
  may be chosen so that $b_{i,j}(M)\geq
  b_{i,j}(I)$ for all $i,j$.  
\end{conjecture}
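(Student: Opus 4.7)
The plan is to attempt a coordinate-change and initial-ideal strategy, analogous to Lemma \ref{anymonregseq} but now for an arbitrary (not necessarily monomial) regular sequence. First I would apply a generic invertible linear change of coordinates $\phi$ on $S$ and pass to the reverse-lexicographic initial ideal $M := \init_{\mathrm{rev}}(\phi(I))$. Standard Gr\"obner theory gives that $M$ is a monomial ideal with the same Hilbert function as $I$ and $b_{i,j}(M) \geq b_{i,j}(I)$ for all $i,j$; for generic $\phi$ this $M = \mathrm{gin}_{\mathrm{rev}}(I)$ is moreover Borel-fixed. Since $I \supseteq F$, we have $M \supseteq \mathrm{gin}_{\mathrm{rev}}(F)$, and since $F$ is a regular sequence with $\deg f_i = e_i$, the ideal $\mathrm{gin}_{\mathrm{rev}}(F)$ has the same Hilbert series $\prod_{i=1}^r (1-t^{e_i})/(1-t)^n$ as $P$.

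The remaining task is to force $P$ into $M$ (or into some monomial overideal reached from $M$ by Betti-nondecreasing moves). I would attempt an induction on $r$. For the base case $r=1$, one analyzes the leading terms of $\phi(f_1)$ and, by adjusting $\phi$ within its generic orbit, arranges $\init_{\mathrm{rev}}(\phi(f_1)) = x_1^{e_1}$, which forces $x_1^{e_1} \in M$. For the inductive step, after arranging $x_1^{e_1}, \ldots, x_k^{e_k} \in M$, one would reduce modulo these pure powers, note that (after further generic adjustment) the images $\bar{f}_{k+1}, \ldots, \bar{f}_r$ form a regular sequence in $S/(x_1^{e_1}, \ldots, x_k^{e_k})$ of the correct degrees, invoke the inductive hypothesis there, and then lift the resulting containment back to $S$ using the shifting-plus-powers and compression-plus-powers machinery of sections 5--7 to maintain $b_{i,j}(M') \geq b_{i,j}(M)$ while enlarging $M$ to an $M'$ containing $x_{k+1}^{e_{k+1}}$.

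The main obstacle, and the reason this proposal is only a sketch, is precisely the inductive step: after quotienting by $(x_1^{e_1}, \ldots, x_k^{e_k})$, the reverse-lex initial term of a generic representative of $f_{k+1}$ modulo the pure powers need \emph{not} be $x_{k+1}^{e_{k+1}}$, and there is no a priori mechanism forcing this pure power to appear in the initial ideal. Indeed, the implication ``$\mathrm{gin}_{\mathrm{rev}}(F)$ has the same Hilbert series as $P$, therefore some Betti-larger monomial overideal of $\mathrm{gin}_{\mathrm{rev}}(F)$ contains $P$'' is essentially the content of the Eisenbud-Green-Harris conjecture \ref{EGH} together with its Betti-theoretic refinement, both open in the generality required here. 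A full proof would likely proceed either via the Moreno-Soc\'ias conjecture (which predicts the exact shape of $\mathrm{gin}_{\mathrm{rev}}(F)$ for generic $F$ and is itself open), or by a genuinely new technique for controlling initial ideals of regular sequences beyond the monomial tools assembled in this paper; for the present I would content myself with settling special cases (e.g.\ $r \leq 2$ or $e_1 = \cdots = e_r = 2$, where Caviglia-Maclagan have proved EGH) and combining them with Theorem \ref{lpp} to deduce the corresponding instances of Conjecture \ref{weaklpp}.
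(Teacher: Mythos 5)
You have not proved this statement, and neither does the paper: it appears only as Conjecture \ref{weaklpp} in the open-problems section, where it is observed to be equivalent (via Theorem \ref{lpp}) to the full Lex-Plus-Powers Conjecture \ref{stronglpp}. Your diagnosis of the obstruction is exactly the one the paper itself records: passing to a (generic) initial ideal preserves the Hilbert function and can only increase Betti numbers, but it destroys the regular sequence, and there is no mechanism forcing the pure powers $x_i^{e_i}$ into $\mathrm{gin}_{\mathrm{rev}}(I)$ or into any Betti-nondecreasing monomial enlargement of it; that forcing step \emph{is} the open content. So as an honest assessment of where the difficulty lies, your sketch is accurate, and your $r=1$ / ``all but one $f_i$ a pure power'' observation matches the only special case the paper records (there one can arrange, as in Lemma \ref{anymonregseq}, that the relevant pure power is the initial term, so the Betti inequality comes for free from taking the initial ideal).

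One concrete claim in your fallback plan does not go through, however: you propose to settle instances of Conjecture \ref{weaklpp} by ``combining'' the Caviglia--Maclagan cases of EGH with Theorem \ref{lpp}. Caviglia--Maclagan prove only the Hilbert-function statement: they produce a lex-plus-$P$ ideal $L$ with $\Hilb(L)=\Hilb(I)$, which indeed gives a monomial $M\supseteq P$ with the right Hilbert function, but it gives no comparison $b_{i,j}(M)\geq b_{i,j}(I)$, and that inequality is precisely the last (and hard) sentence of Conjecture \ref{weaklpp} --- it is the Betti-theoretic refinement that distinguishes LPP from EGH. Theorem \ref{lpp} cannot supply it either, since Theorem \ref{lpp} compares $M$ (an ideal containing the \emph{monomial} regular sequence $P$) with its lex-plus-$P$ ideal; it says nothing about the original $I$, which contains only the non-monomial sequence $F$. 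So knowing EGH in a given case yields the first sentence of the conjecture but leaves the Betti inequality exactly as open as before.
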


Conjecture \ref{weaklpp} holds if all but one of the $f_{i}$ are pure
powers.  (For example, if $(f_{1},\cdots, f_{r-1})$ =
$(x_{1}^{e_{1}}, \cdots, x_{r-1}^{e_{r-1}})$, then the monomial support of
$f_{r}$ must contain some term not divisible by any of $x_{1}, \cdots,
x_{r-1}$.  Take an initial ideal in some appropriate order, and if
necessary apply Lemma \ref{anymonregseq}.)  Thus, the Lex-Plus-Powers
Conjecture holds for these regular sequences as well:

\begin{proposition}
Let $F$, $I$, and $L$ be as in Conjecture
\ref{stronglpp}, and suppose further that all but one of the $f_{i}$
is a pure power.  Then there exists a lex-plus-$P$ ideal $L$ with the
same Hilbert function as $I$, and, for all $i$ and $j$, we have
$b_{i,j}(L)\geq b_{i,j}(I)$.  
\end{proposition}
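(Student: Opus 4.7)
The plan is to reduce to a monomial regular sequence situation and then invoke Theorem~\ref{lpp} via Lemma~\ref{anymonregseq}. After relabeling variables I may assume (for notational convenience) that the unique non-pure-power generator is $f_r$ and that $f_i = x_i^{e_i}$ for $1 \le i < r$; the case where the non-monomial generator is some other $f_k$ is identical up to reindexing.

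The key structural observation is that the monomial support of $f_r$ must contain some monomial lying in $\bbbk[x_r, \dots, x_n]$. To see this, set $P' := (x_1^{e_1}, \dots, x_{r-1}^{e_{r-1}})$ and $u := x_1^{e_1 - 1} \cdots x_{r-1}^{e_{r-1} - 1}$. The image $\bar{u} \in S/P'$ is nonzero since it generates the socle of the Artinian factor $\bbbk[x_1, \dots, x_{r-1}]/(x_1^{e_1}, \dots, x_{r-1}^{e_{r-1}})$. For a monomial $m'$, the product $\bar{m'} \cdot \bar{u}$ vanishes in $S/P'$ precisely when some $x_i$ with $i < r$ divides $m'$. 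Therefore if every term of $f_r$ were divisible by some such $x_i$, we would have $\bar{f_r} \cdot \bar{u} = 0$, contradicting the fact that $f_r$ is a nonzero-divisor modulo $P'$.

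Next I would choose an elimination (product) term order on $S$ in which monomials are compared first by their total degree in $x_1, \dots, x_{r-1}$ (smaller is larger), with ties broken by any standard monomial order. Under such an order, every degree-$e_r$ monomial in $\bbbk[x_r, \dots, x_n]$ strictly dominates every degree-$e_r$ monomial involving some $x_i$ with $i < r$, so by the previous paragraph the initial monomial $g := \init(f_r)$ lies in $\bbbk[x_r, \dots, x_n]$ and has degree $e_r$. Setting $J := \init(I)$, we obtain a monomial ideal satisfying $\Hilb(J) = \Hilb(I)$ and $b_{i,j}(J) \ge b_{i,j}(I)$ for all $i,j$, and $J$ contains the monomial regular sequence $(x_1^{e_1}, \dots, x_{r-1}^{e_{r-1}}, g)$, whose generators have pairwise disjoint variable supports.

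To finish, Lemma~\ref{anymonregseq} (applied after a further relabeling of $x_r, \dots, x_n$ to put $x_r \in \supp(g)$) replaces $J$ by a monomial ideal $M$ containing $P$, with the same Hilbert function and at least as large graded Betti numbers; Theorem~\ref{lpp} applied to $M$ then produces the desired lex-plus-$P$ ideal $L$. No step is genuinely hard here: the entire argument is the socle-based support claim for $f_r$, together with the standard elimination-order trick to pass to an initial ideal, both of which then feed into the machinery already developed in the paper.
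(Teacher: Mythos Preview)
Your proof is correct and follows exactly the approach the paper sketches in the parenthetical remark preceding the proposition: observe that $f_r$ has a term in $\bbbk[x_r,\dots,x_n]$, take an initial ideal in an appropriate order, then apply Lemma~\ref{anymonregseq} and Theorem~\ref{lpp}. You simply fill in the details---the socle argument for the support claim and an explicit elimination-type order---that the paper leaves to the reader.
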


In \cite{CM}, Caviglia and Maclagan prove that the
Eisenbud-Green-Harris Conjecture holds whenever
$e_{k}>\sum_{\ell=1}^{k-1}e_{\ell}$ for all $k$.  In light of this
result, we consider the following question potentially tractable:

\begin{problem}
Suppose that, for all $k$, $e_{k}>\sum_{\ell=1}^{k-1}e_{\ell}$.  Does
the Lex-Plus-Powers Conjecture hold for ideals containing a regular
sequence in degrees $(e_{1},\cdots, e_{r})$?
\end{problem}

\subsection{Betti numbers over $S/P$}
Gasharov, Hibi, and Peeva make the following conjecture \cite{GHP}:

\begin{conjecture}
Let $I$ be a homogeneous ideal containing $P$, and let $L$ be the
lex-plus-$P$ ideal with the same Hilbert function.  Let $\bar{I}$ and
$\bar{L}$ be their images in $R=S/P$.  Then the graded Betti numbers of
$\bar{I}$ and $\bar{L}$ satisfy
$b_{i,j}^{R}(\bar{L})\geq b_{i,j}^{R}(\bar{I})$ for all $i$ and $j$.  
\end{conjecture}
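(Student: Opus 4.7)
The plan is to imitate the proof strategy of Theorem~\ref{lpp} verbatim, but tracking $R$-Betti numbers rather than $S$-Betti numbers at every stage. The pipeline would be: (1) reduce to the case that $I$ is monomial-plus-$P$ by a Gr\"obner deformation; (2) use shifting and compression to reduce to the Borel-plus-$P$ case; (3) compare Borel-plus-$P$ to lex-plus-$P$ via an $R$-analog of Theorem~\ref{boreltolex}. Step~(1) is the easiest: taking a reverse lex initial ideal fixes $P$ (since $P$ is already monomial), and upper semicontinuity of $\dim_{\bbbk}\Tor^R_i(\bbbk,-)_j$ along the flat one-parameter family from $I$ to $\init_{\mathrm{rev}}(I)$ handles the $R$-Betti number inequality exactly as it handles the $S$-Betti inequality.

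For step~(2), the shift and compression constructions in Sections~5--7 are defined purely in terms of monomials and $\bbbk$-vector space operations, so they still make sense here. What needs to be reverified is that each of these operations only increases $R$-Betti numbers. The natural approach is to revisit the multigraded Mayer--Vietoris and mapping-cone arguments of Sections~5--6 and translate them to $R$. A useful bridge is the change-of-rings spectral sequence
\[
E^2_{p,q}=\Tor^R_p\!\big(\Tor^S_q(R,\bbbk),\,\bar{I}\big)\;\Longrightarrow\;\Tor^S_{p+q}(\bbbk,\bar{I}),
\]
together with the fact that $\Tor^S_\bullet(R,\bbbk)$ is an exterior algebra on $r$ generators; if one can show that the $E^2$-page (and its differentials) behave monotonically under shifting and compression, the $R$-Betti statements can be inferred from the $S$-Betti statements already established in this paper.

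Step~(3) is where the main obstacle lies. Murai's Theorem~\ref{boreltolex} was proved over $S$ using the Eliahou--Kervaire resolution of the underlying Borel ideal, together with the Mermin--Peeva--Stillman formula~\eqref{thecolons}. No analogous explicit minimal $R$-free resolution of a Borel-plus-$P$ ideal is currently available; over the complete intersection $R$, the minimal resolution of $\bar{B}$ is infinite, and only its asymptotic periodicity (Eisenbud) is well understood. My plan would be to lift the Eliahou--Kervaire resolution of $\bar{B}$ from $S$ to $R$ via the Shamash/Eisenbud construction, obtain an explicit (typically non-minimal) $R$-free resolution, and then bound the ranks of the minimal $R$-resolution in terms of this one. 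Running the same procedure for $\bar{L}$ and comparing term-by-term in each multidegree ought to give the desired inequality, once one shows that the ``excess'' coming from non-minimality is controlled uniformly in terms of the colon ideals $(B:\mathbf{x}_\tau)$ and $(L:\mathbf{x}_\tau)$ to which Theorem~\ref{lpp} itself applies.

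The hard part will be precisely this last comparison: the Shamash construction introduces correction terms from the Koszul cycles on $x_1^{e_1},\dots,x_r^{e_r}$, and these corrections interact nontrivially with the Eliahou--Kervaire structure of $\bar{B}$. Unlike the $S$-Betti numbers, where $b^S_{i,j}(B+P)$ decomposes cleanly as a sum of Betti numbers of the colons $(B:\mathbf{x}_\tau)$, it is not clear that $b^R_{i,j}(\bar{B})$ admits a comparably clean decomposition, and producing one is likely the central new input required. A possible simplification is to first treat the squarefree analog (where $R$ becomes a Stanley--Reisner ring of a simplex boundary and much more combinatorial machinery applies) as a test case, before attempting the general mixed-power setting.
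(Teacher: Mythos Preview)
This statement is not proved in the paper: it is the Gasharov--Hibi--Peeva conjecture, listed in Section~10 as an open problem. The paper offers only the remark that, after replacing the Koszul complex with a resolution of $\bbbk$ over $R$, an analog of Corollary~\ref{unfixedmulti} continues to hold, and then explicitly says that ``it is less clear how the rest of the argument might translate.'' So there is no proof to compare against.

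Your proposal is candid about being a plan rather than a proof, and you have correctly located the principal obstruction in Step~(3): there is no known analog over $R$ of the Eliahou--Kervaire/Mermin--Peeva--Stillman decomposition that underlies Theorem~\ref{boreltolex}, and your Shamash/Eisenbud route produces a non-minimal resolution whose slack you do not know how to control. But Step~(2) is also not established. The paper confirms only the Mayer--Vietoris half (the analog of Corollary~\ref{unfixedmulti}); the fixed-multidegree half (Corollary~\ref{fixedmulti}) rests on Theorem~\ref{keylemma} and Theorem~\ref{HM}, and it is not clear either has an $R$-analog. Your proposed bridge via the change-of-rings spectral sequence is conditional (``if one can show that the $E^2$-page and its differentials behave monotonically\ldots''), and monotonicity of the differentials is exactly what is hard --- the $E^2$-terms alone do not determine the abutment. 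Likewise, all of Section~6 (the interaction of $t$-shifting with the extra generator $b^\beta$) would need to be redone over $R$, and the mapping-cone bookkeeping there depends delicately on where the $S$-Betti numbers vanish; over $R$ the resolutions are infinite and those vanishing arguments disappear.

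In short: your outline is a reasonable research direction and agrees with the paper's own hint about Corollary~\ref{unfixedmulti}, but it is not a proof, and the paper does not claim one either.
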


This conjecture deals with infinite resolutions.  Nevertheless,
our techniques may give some indication of how to proceed.  For
example, after replacing the Koszul complex with a resolution of $\bbbk$
over $R$, an analog of Corollary \ref{unfixedmulti} continues to
hold.  It is less clear how the rest of the argument might
translate, however.


\begin{bibdiv}
\begin{biblist}[\resetbiblist{EGH2}]

\bib{AHH1}{article}{
label={AHH1},
author={Aramova, Annetta},
author={Herzog, J\"uergen},
author={Hibi, Takayuki},
title={Squarefree lexsegment ideals},
journal={Math Z.},
volume={228},
date={1997},
pages={174--211}}

\bib{AHH2}{article}{
label={AHH2},
author={Aramova, A.},
author={Herzog, J\"uergen},
author={Hibi, Takayuki},
title={Shifting operations and graded Betti numbers},
journal={J. Algebraic Combin.},
volume={12},
date={2000},
pages={207--222}}

\bib{BS}{article}{
label={BS}
title={A theorem on refining division orders by the reverse
  lexicographic orders},
author={Bayer, Dave},
author={Stillman, Mike},
journal={Duke J. Math},
volume={55},
date={1987},
pages={321--328}}

\bib{Bi}{article}{
label={Bi}
title={Upper bounds for the Betti numbers of a given Hilbert
  function},
author={Bigatti, Anna Maria},
journal={Comm. Algebra},
volume={21},
date={1993},
pages={2317--2334}}

\bib{BH}{book}{
label={BH}
author={Bruns, Winfried},
author={Herzog, J\"urgen},
title={Cohen-Macaulay rings},
edition={Revised edition},
publisher={Cambridge University Press},
address={Cambridge},
date={1998}}

\bib{CM}{article}{
label={CM}
title={Some cases of the Eisenbud-Green-Harris conjecture $\mathrm{(preprint)}$},
author={Caviglia, Giulio},
author={Maclagan, Diane}
}

\bib{CL}{article}{
label={CL}
title={A generalization of a combinatorial theorem of Macaulay},
author={Clements, G. F.},
author={Lindstr\"om, B.},
journal={J. Combinatorial Theory},
volume={7},
date={1969},
pages={230--238}}

\bib{Co}{thesis}{
label={Co}
title={Subsets of Complete Intersections in $\mathbb{P}^{2}$:  Their
  Hilbert Functions and Associated $2$-Type Vectors},
author={Cooper, Susan M.},
organization={Queen's University},
date={2000},
type={M.Sc. Thesis}}

\bib{Ei1}{book}{
label={Ei1}
author={Eisenbud, David},
title={Commutative Algebra},
subtitle={With a view toward algebraic geometry},
series={Graduate Texts in Mathematics},
volume={150},
publisher={Springer-Verlag},
address={New York},
date={1995}}

\bib{Ei2}{book}{
label={Ei2}
author={Eisenbud, David},
title={The Geometry of Syzygies},
subtitle={A Second Course in Commutative Algebra and Algebraic
  Geometry},
series={Graduate Texts in Mathematics},
volume={229},
publisher={Springer-Verlag},
address={New York},
date={2005}}

\bib{EGH1}{article}{
label={EGH1}
author={Eisenbud, David},
author={Green, Mark},
author={Harris, Joe},
title={Higher Castelnuovo theory},
journal={Ast\'erisque},
volume={218},
pages={187--202},
date={1993}}

\bib{EGH2}{article}{
label={EGH2}
author={Eisenbud, David},
author={Green, Mark},
author={Harris, Joe},
title={Cayley-Bacharach theorems and conjectures},
journal={Bull. Amer. Math. Soc.},
volume={33},
pages={295--324},
date={1996}}

\bib{EK}{article}{
label={EK}
author={Eliahou, Shalom},
author={Kervaire, Michel},
title={Minimal resolutions of some monomial ideals},
journal={J. Algebra},
volume={129},
date={1990},
pages={1--25}}

\bib{EKR}{article}{
label={EKR}
author={Erd\"os, Paul},
author={Ko, Chao},
author={Rado, R.},
title={Intersection theorems for systems of finite sets},
journal={Quart. J. Math. Oxford Ser. (2)},
volume={12},
date={1961},
pages={313--320}}

\bib{Fr}{article}{
label={Fr}
author={Francisco, Chris},
title={Almost complete intersections and the Lex-Plus-Powers
  Conjecture},
journal={J. Algebra},
volume={276},
date={2004},
pages={737--760}}

\bib{FR}{article}{
label={FR}
author={Francisco, Chris},
author={Richert, Ben},
title={Lex-plus-powers ideals},
booktitle={Syzygies and Hilbert functions
(I. Peeva, ed.), Lect. Notes Pure Appl. Math, vol. 254, Chapman \& Hall/CRC,
Boca Raton,
2007},
pages={113--144}
}

\bib{Ga}{article}{
label={Ga}
author={Galligo, Andr\'{e}},
title={\`A propos du th\'eor\`eme de pr\'eparation de Weierstrass},
booktitle={Fonctions de Plusieurs Variables Complexes,
Complexes, Lecture Notes in Math., vol. 409, Springer, Berlin},
pages={543--579},
}

\bib{GHP}{article}{
label={GHP}
author={Gasharov, Vesselin},
author={Hibi, Takayuki},
author={Peeva, Irena},
title={Resolutions of $\mathbf{a}$-stable ideals},
journal={J. Algebra},
volume={254},
date={2002},
pages={375--394}}

\bib{Ho}{article}{
label={Ho}
author={Hochster, Melvin},
title={Cohen-Macaulay rings, combinatorics, and simplicial complexes},
booktitle={Ring Theory,
II (B. R. McDonald and R. Morris, eds.), Proc. Second Conf., Univ. Oklahoma,
Norman, Oklahoma, Lecture Notes in Pure and Appl. Math., vol. 26, Dekker, New York},
pages={171--223},
date={1977}}

\bib{Hu}{article}{
label={Hu}
author={Hulett, Heather A.},
title={Maximum Betti numbers of homogeneous ideals with a given
  Hilbert function},
journal={Comm. Algebra},
volume={21},
date={1993},
pages={2335--2350}}

\bib{Ma}{article}{
label={Ma}
author={Macaulay, F.},
title={Some properties of enumeration in the theory of modular systems},
journal={Proc. London Math. Soc.},
volume={26},
pages={531--555},
year={1927}}

\bib{Me1}{article}{
label={Me1}
author={Mermin, Jeff},
title={Monomial regular sequences $\mathrm{(preprint)}$}
}

\bib{Me3}{article}{
label={Me2},
author={Mermin, Jeff},
title={Compressed ideals},
journal={Bull. London Math. Soc. $\mathrm{(in\ press)}$},
}

\bib{MP1}{article}{
label={MP1}
author={Mermin, Jeff},
author={Peeva, Irena},
title={Lexifying ideals},
journal={Math. Res. Lett.},
volume={13},
date={2006},
pages={642--656}}

\bib{MP2}{article}{
label={MP2}
author={Mermin, Jeff},
author={Peeva, Irena},
title={Hilbert functions and lex ideals},
journal={J. Algebra},
volume={313},
date={2007},
pages={642--656}}

\bib{MPS}{article}{
label={MPS}
author={Mermin, Jeff},
author={Peeva, Irena},
author={Stillman, Mike},
title={Ideals containing the squares of the variables},
journal={Adv. Math. $\mathrm{(in\ press)}$}
}

\bib{Mu1}{article}{
label={Mu},
author={Murai, Satoshi},
title={Borel-plus-powers monomial ideals},
journal={J. Pure and Appl. Algebra $\mathrm{(in\ press)}$}
}

\bib{HM}{article}{
label={MH},
author={Murai, Satoshi},
author={Hibi, Takayuki},
title={Algebraic shifting and graded Betti numbers},
journal={Trans. Amer. Math. Soc. $\mathrm{(in\ press)}$}
}

\bib{Pa}{article}{
label={Pa}
author={Pardue, Keith},
title={Deformation classes of graded modules and maximal Betti
  numbers},
journal={Illinois J. Math},
volume={40},
date={1996},
pages={564--585}}

\bib{Pe}{article}{
label={Pe}
author={Peeva, Irena},
title={Consecutive cancellations in Betti numbers},
journal={Proc. Amer. Math. Soc.},
volume={132},
date={2004},
pages={3503--3507}}

\bib{Ri}{article}{
label={Ri}
author={Richert, Ben},
title={A study of the lex plus powers conjecture},
journal={J. Pure Appl. Algebra},
volume={186},
date={2004},
pages={169--183}}

\bib{RS}{article}{
label={RS}
author={Richert, Ben},
author={Sabourin, Sindi},
title={Lex plus powers ideals, $n$-type vectors, and socle degrees $\mathrm{(in}$ $\mathrm{preparation)}$}
}
\end{biblist}
\end{bibdiv}

\end{document}